\documentclass[11pt]{article}
\usepackage{amsmath,amsthm,amssymb,amsfonts,amsbsy,mathtools,enumerate}
\usepackage{tikz}
\usepackage{float}
\usepackage{graphicx}
\usepackage[mathlines]{lineno}
\usepackage{blkarray}
\usepackage{caption, subcaption}
\usepackage{nicematrix}
\usepackage[toc,page]{appendix}
\usepackage[hidelinks]{hyperref}
\usepackage{url}
\usepackage{tikz-cd}
\newtheorem*{theorem*}{Theorem}
\theoremstyle{plain}
\theoremstyle{definition}
\newtheorem{theorem}{Theorem}[section]
\newtheorem{corollary}[theorem]{Corollary}

\newtheorem{lemma}[theorem]{Lemma}
\newtheorem{prop}[theorem]{Proposition}

\newtheorem{defn}[theorem]{Definition}
\newtheorem{rk}{Remark}

\newcommand\sqr[2]{{\vbox{\hrule height.#2pt
    \hbox{\vrule width.#2pt height#1pt \kern#1pt
        \vrule width.#2pt}\hrule height.#2pt}}}
\newcommand\qedbox{%
	\ifmmode\eqno\sqr53
	\else\nolinebreak\ \hfill\sqr53\medbreak\fi}

\newcommand{\al}{\ensuremath{\alpha}}

\def\FF{{\mathbb F}}
\def\Q{{\mathbb Q}}
\def\Z{{\mathbb Z}}

\def\T{{\mathcal{T}}}
\newcommand\re{{\mathbb R}}%reals

\newcommand\hlcl[2]{H_1(\text{Cl}(#1), #2)} 
 
\newcommand\clc[1]{\text{Cl}(#1)}

\newcommand\rnkg[2]{\text{rank}_{#1}(#2)}
\newcommand\rankg[2]{\text{rank}_{\mathbb{#1}}(#2)}
\newcommand\fchar[1]{\text{char}(#1)}

\newcommand\oa[2]{\text{OA}({#1}, {#2})}
\DeclareMathOperator\im{im}

\newcommand\kdt[2]{\ker(\delta_{#1}^T(#2))}
\newcommand\dkdt[2]{\dim(\ker(\delta_{#1}^T(#2)))}
\newcommand\idt[2]{\im(\delta_{#1}^T(#2))}
\newcommand\didt[2]{\dim(\im(\delta_{#1}^T(#2)))}

\newcommand\tr[1]{\text{tr}(#1)}

\title{On the homology groups of clique complexes of strongly regular graphs}
\author{Sebastian M. Cioab\u{a} and Mutasim Mim}
\date{\today}

\begin{document}
\maketitle

\begin{abstract}
For a graph $G$ and a field $\mathbb{F},$ the first clique-homology $H_1(\text{Cl}(G),\mathbb{F})$ vanishes precisely when the cycle space of $G$ over $\mathbb{F}$ is generated by the signed boundaries of the triangles. We develop cycle-surgery methods for establishing this property in arbitrary characteristic and apply them to strongly regular graphs. Combining our results with Neumaier's classification, we show that $H_1(\text{Cl}(G),\mathbb{F}) \ne 0$ can only occur in the Petersen graph, the Shrikhande graph, the complete bipartite graphs, the conference graphs on at most $255$ vertices, the lattice graphs, and the finite exceptional families $E_m$ in Neumaier's classification of strongly regular graphs with smallest adjacency eigenvalue $-m$, for some integer $m \geq 3$. Consequently, if $(G_n)_{n\geq 1}$ is an infinite family of pairwise distinct strongly regular graphs and $(\mathbb{F}_n)_{n\geq 1}$ is a sequence of fields such that $H_1(\text{Cl}(G_n), \mathbb{F}_n)\not=0$ for every $n$, then either $G_n$ is a lattice graph for infinitely many $n$, or $\lim_{n\rightarrow +\infty} \lambda_{\min}(G_n)=-\infty$. 

For Latin square graphs, we determine the clique homologies over arbitrary fields and show that if $G$ is the strongly regular graph associated with a Latin square $M$ of order $n \geq 5$ and $\mathbb{F}$ is any field, then $H_i(\text{Cl}(G),\mathbb{F})=0$ for $i=1$ or $i \geq 3$, and $\dim  H_2(\text{Cl}(G),\mathbb{F})=(n-1)^3-I(M),$ where $I(M)$ is the number of $2 \times 2$ Latin subsquares or intercalates in $M$.
\end{abstract}

\section{Introduction}

Let $G$ be a graph and $\clc{G}$ be its clique complex. If $R$ is a ring, the vanishing of the first clique homology $\hlcl{G}{R}$ over $R$ phenomenon has been studied in several contexts, particularly for $\mathbb{Z}$, $\FF_2,$ the field with two elements, and $\re$, see \cite{Duchet,Kahle,Meshulam}. The assertion $\hlcl{G}{R}=0$ is equivalent to the cycle space of the graph $G$ being spanned by the signed boundaries of the triangles in $G$.
%(the image of the boundary matrix $\delta_{1,G,\FF}^T$). 
% the boundary matrix has not been defined yet

While they did not explicitly refer to clique complexes, homology, or cohomology, Duchet, Las Vergnas, and Meyniel \cite{Duchet} obtained a characterization of planar graphs $G$ with $\hlcl{G}{\FF_2}=0$ and called such graphs nullhomotopic. Meshulam \cite{Meshulam} proved that any graph $G$ with the property that every four vertices have a common neighbor satisfies $\hlcl{G}{\FF}=0$ for every field $\FF$. Linial and Meshulam \cite{LinialMeshulam} showed that in a random $2$-complex $Y$, $H_1(Y,\FF_2)=0$ has a sharp threshold $p \sim \frac{2\log(n)}{n}$. In \cite{cioaba-guo-ji-mim}, we strengthened Meshulam's result over $\re$ and proved that any graph $G$ with the property that every induced cycle has four consecutive vertices with a common neighbor satisfies $\hlcl{G}{\FF}=0$. We also showed that if $G$ is a strongly regular graph with parameters $(v,k,\lambda,\mu),$ and $2 \lambda + \mu + 2 > 2k,$ then  $\hlcl{G}{\re}=0$ and observed that the Paley graph $P(q)$ on $q$ vertices satisfies $\hlcl{P(q)}{\re}=0$ for $q \geq 4097$.

In this paper, we study the vanishing $\hlcl{G}{\FF}=0$ phenomenon for strongly regular graphs $G$ and an arbitrary field $\FF$. We combine our vanishing $H_1$ results with Neumaier's classification of strongly regular graphs to isolate the strongly regular graphs $G$ for which $\hlcl{G}{\FF}$ may be nonzero for some $\FF$. In particular, we prove that lattice graphs and complete bipartite graphs have non-vanishing $H_1$. Furthermore, outside of these two families and a finite exceptional family, any strongly regular $G$ with $\hlcl{G}{\FF}\ne 0$ must have integral adjacency eigenvalues, and for every fixed $m \geq 3,$ there are only a finite number of such graphs $G$ with smallest eigenvalue $-m$. 

% Our proofs are combinatorial and are based on local surgery of cycle lemmas. In particular, our techniques do not require the inner product structure of the ambient field $\FF,$ and are valid over every field. The condition $\hlcl{G}{\FF}=0$ over every field in particular implies that $\hlcl{G}{\re}=0$. As a consequence of the vanishing $H_1$ result, we obtain a complete description of the up-Laplacian spectrum at every level, and all homologies of Latin square graphs of order $n \geq 5.$ We show that even though the first homology vanishes, the second homology $H_2$ counts the number of $2 \times 2$ Latin subsquares. Three infinite families of strongly regular graphs appearing in Neumaier's classification are graphs associated with orthogonal arrays, Steiner systems, and conference graphs. We prove our vanishing results for these families separately. 

Our proofs are combinatorial and are based on local surgery of cycle lemmas. In particular, our techniques do not require an inner product structure of the ambient field $\FF,$ and are valid over every field.  Our starting point is a collection of cycle-surgery lemmas. They replace cycles by shorter cycles after adding signed triangle boundaries. The arguments are algebraic over the coefficient ring: they do not use an inner product and remain valid over arbitrary characteristic. One consequence is the following vanishing criterion: if every induced cycle of a graph has four consecutive vertices with a common neighbor, then its cycle space is triangle-boundary generated over every field. For strongly regular graphs, it is enough to verify an analogous local condition only for induced cycles of lengths four and five (see Theorems \ref{thm:4-vertex-common-neighbor} and \ref{thm:4-vertex-common-neighbor-srg}). We apply this framework to the principal infinite families in Neumaier's classification theorem for strongly regular graphs. This yields our main classification theorem (Theorem \ref{thm:classification-theorem-1}) and its asymptotic counterpart (Theorem \ref{thm:classification-theorem-2}):

\begin{theorem*}[Classification theorem]
    Let $G$ be a strongly regular graph and $\FF$ a field. If $\hlcl{G}{\FF} \not = 0$, then $G$ must be one of the following graphs:
    \begin{enumerate}
       \item the Petersen graph, with parameters $(v,k,\lambda,\mu)=(10,3,0,1),$
       \item the Shrikhande graph, with parameters $(v,k,\lambda,\mu)=(16,6,2,2),$
       \item an element of the exceptional family $E_m$, for some $m \geq 3$, described in the Neumaier's classification,
       \item a conference graph $G(v, \frac{v-1}{2}, \frac{v-5}{4}, \frac{v-1}{4})$ for some $v \leq 255$,
       \item a complete bipartite graph with each part of size $n$ and parameters $(2n, n, 0, n),$
       \item a lattice graph $L_2(n)=K_n \square K_n,$ with parameters $(v,k,\lambda,\mu)=(n^2,2(n-1),n-2,2), \ n \geq 3$.
    \end{enumerate}
\end{theorem*}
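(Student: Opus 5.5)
The proof runs through Neumaier's classification. By that result, a primitive or imprimitive strongly regular graph $G$ with smallest adjacency eigenvalue $-m$, $m\ge 2$ integral, is one of four things: the complement of a union of cliques (complete multipartite), a conference graph, a graph of Latin-square type or Steiner type (from orthogonal arrays $\oa{m}{n}$ or Steiner systems $S(2,m,n)$), or a member of the finite exceptional family $E_m$ (for $m=2$ this is Seidel's list). Graphs with non-integral eigenvalues are conference graphs. The plan is to show $\hlcl{G}{\FF}=0$ for every field in each infinite family outside the six listed cases, and to treat small and exceptional cases separately. The main tool is Theorem \ref{thm:4-vertex-common-neighbor-srg}: it suffices to check that every induced $4$-cycle and $5$-cycle of $G$ has four consecutive vertices with a common neighbor.

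\textbf{Imprimitive and disconnected cases.} A disconnected strongly regular graph is a union of cliques $mK_n$. Its clique complex is a disjoint union of simplices, so $H_1=0$. A complete multipartite graph $K_{m\times n}$ with $m\ge 3$ parts has the property that any four vertices have a common neighbor, or it can be handled directly by the local criterion. If two opposite vertices of an induced $4$-cycle lie in the same part, a vertex in a third part is adjacent to all four. So only $m=2$ survives, and this is the complete bipartite case $K_{n,n}$, which is triangle-free.

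\textbf{Latin square and Steiner type.} For $\oa{m}{n}$ graphs with $m\ge 3$ and Steiner graphs, I would show that for $n$ large relative to $m$ the parameters force $2\lambda+\mu+2>2k$. Where that fails, I would verify the local criterion directly: take an induced $4$- or $5$-cycle, look at the lines or blocks through consecutive pairs of its vertices, and count to produce a common neighbor of four consecutive vertices. The remaining small parameter sets would be absorbed into the finite family $E_m$ or the Petersen/Shrikhande exceptions. For $m=2$ the OA graph is the lattice graph $L_2(n)$, which stays on the list, and the triangular graphs $T(n)$ are checked by the same local argument; Shrikhande appears as the only non-lattice graph with these parameters.

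\textbf{Conference graphs.} Here $\lambda\approx\mu\approx v/4$ and $k=(v-1)/2$. For an induced $5$-cycle, inclusion–exclusion on neighborhoods gives a lower bound on the number of common neighbors of four consecutive vertices. Eigenvalue interlacing, or the known bounds on common neighborhoods of vertex sets in strongly regular graphs, should show this bound is positive once $v>255$. This step is the main obstacle: deriving an explicit estimate that is sharp enough to reach $v\le 255$. I would first try the expander mixing lemma with $\lambda_2=(-1+\sqrt v)/2$ applied to the neighborhoods, and then apply it to the induced $4$-cycle case in the same way.
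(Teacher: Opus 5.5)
Your overall architecture (Neumaier's classification, then vanishing family by family) matches the paper. The disconnected, complete multipartite, triangular and Steiner cases are fine. The orthogonal-array case, however, fails as planned. First, the parameter inequality runs the other way: for the graph of an $\oa{t}{n}$ one has $2\lambda+\mu+2-2k=(t-1)(3t-2-2n)$. So $2\lambda+\mu+2>2k$ holds only when $2n<3t-2$, i.e.\ for $n$ small relative to $t$; for Latin square graphs ($t=3$) only for $n\le 3$.

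Second, and decisively, your fallback cannot work, because Latin square graphs do not satisfy the hypothesis of Theorem~\ref{thm:4-vertex-common-neighbor-srg}. Let $a=(x,y)$, $b=(x,z)$, $c=(x',z)$, $d=(x',y)$ be an induced rectangular $4$-cycle. Its four symbols are pairwise distinct. A vertex $w\notin\{a,b,c,d\}$ therefore meets at most one of them through its symbol, at most two through its row, and at most two through its column. Its row and column cannot both meet the cycle, since that would put $w$ at a corner. So $w$ is adjacent to at most three of them, and $a,b,c,d$ have no common neighbor. An example is the cells $(0,0),(0,1),(2,1),(2,0)$ of the cyclic square $M(i,j)=i+j$ over $\Z/5\Z$. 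No counting over lines repairs this.

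The paper proceeds differently:
\begin{itemize}
\item It fills such a cycle with six triangles through two auxiliary cells $b'$ (in row $x'$) and $b''$ (in column $y$), both carrying the symbol $M[b]$; neither is adjacent to all four corners (Proposition~\ref{prop:rectangular-4-cycle}).
\item It inducts on the length of rectangular cycles (Proposition~\ref{prop:rectangular-cycles}) and on the number of diagonal edges of general cycles (Theorem~\ref{vanishing-latin-h1}).
\item For $t\ge 4$, it subtracts one triangle per diagonal edge, so the remainder lies in the cycle space of the Latin square subgraph $G_1$ (Theorem~\ref{theorem:vanishing-h1-orthogonal-array}).
\end{itemize}
The arrays with $n\le 3$ give $K_4$, $K_{3,3,3}$ and $K_9$. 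These are checked directly, not absorbed into $E_m$.

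Your conference-graph step is also not viable with the tools you name. Inclusion--exclusion from the parameters alone gives a positive count of common neighbors of four consecutive vertices only under a condition of the type $2\lambda+\mu+2>2k$. Conference graphs have $2\lambda+\mu+2=3\mu<4\mu=2k$ for every $v$, so this never becomes positive for large $v$. The expander mixing lemma bounds edge counts between large sets, not four-fold common neighborhoods of prescribed vertices. Indeed, these estimates cannot exclude an induced $4$-cycle $(a,b,c,d)$ with $b$ an isolated vertex of $G_{a,c}$; then $a,b,c,d$ have no common neighbor at all. Proposition~\ref{prop:common-neighborhood-two-components} has to allow exactly this configuration.

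The paper therefore proves something weaker that suffices:
\begin{itemize}
\item For $v\ge 256$, a disconnected $G_{x,y}$ is one isolated vertex plus one component. Hence for an induced $4$-cycle, either $b,d$ are joined by a path in $G_{a,c}$ or $a,c$ by a path in $G_{b,d}$ (Theorem~\ref{thm:conference-induced-4-cycle-across-vertices}).
\item Such a path gives a fan of triangles $\{a,v_i,v_{i+1}\}$, $\{c,v_i,v_{i+1}\}$ filling the cycle (Lemma~\ref{lemma:path-in-common-neighborhood}).
\item For an induced $5$-cycle with $N(x_1)\cap N(x_3)\cap N(x_4)=\emptyset$, it uses that $N(x_1)$ is the disjoint union of $N(x_1)\cap N(x_3)$ and $N(x_1)\cap N(x_4)$ (each of size $k/2$) and is connected. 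This yields an edge that cuts the cycle into shorter ones.
\end{itemize}

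Finally, for $m=2$ you still owe vanishing for the Clebsch, Schl\"{a}fli and three Chang graphs. These lie in $E_2$ and are not on the list, and none satisfies $2\lambda+\mu+2>2k$. The paper settles them by Smith normal form computations (Proposition~\ref{prop:computational-data}).
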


\begin{theorem*}[Infinite-family dichotomy]
    If $(G_n)_{n=1}^{\infty}$ is a sequence of pairwise distinct strongly regular graphs and $(\FF_n)_{n=1}^{\infty}$ is a sequence of arbitrary fields such that $\hlcl{G_n}{\FF_n}\not =0$ for every $n$, then one of the following statements is true:
    \begin{enumerate}
        \item for infinitely many values of $n$, $G_n$ is a lattice graph with at least nine vertices,  
        \item $\lim_{n \to \infty}\lambda_{\min}(G_n)=-\infty.$
    \end{enumerate}
\end{theorem*}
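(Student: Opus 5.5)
The statement to prove is the infinite-family dichotomy, and I plan to derive it directly from the classification theorem (Theorem~\ref{thm:classification-theorem-1}). I will take that theorem together with Neumaier's classification as given.

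First, apply the classification theorem to each $G_n$. Since $\hlcl{G_n}{\FF_n}\neq 0$, every $G_n$ falls into one of the six listed classes. Suppose alternative 1 fails, so only finitely many $G_n$ are lattice graphs with at least nine vertices. Every lattice graph in the list has $n\ge 3$, hence $n^2\ge 9$ vertices. So after discarding finitely many terms, no $G_n$ is a lattice graph.

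Next, discard the remaining finite classes. The Petersen and Shrikhande graphs are single graphs. There are finitely many conference graphs on at most $255$ vertices. Because the $G_n$ are pairwise distinct, each of these graphs occurs at most once, so only finitely many $n$ are affected. Discarding them, every remaining $G_n$ is either a complete bipartite graph $K_{m,m}$ or a member of some exceptional family $E_m$ with $m\ge 3$.

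Now show $\lambda_{\min}(G_n)\to-\infty$. It suffices to show that for each fixed $M$, only finitely many remaining $G_n$ have $\lambda_{\min}(G_n)\ge -M$.
\begin{itemize}
\item For $K_{m,m}$ the spectrum is $\{m,0,-m\}$, so $\lambda_{\min}=-m$. Only the finitely many $m\le M$ give $\lambda_{\min}\ge -M$, and each corresponding graph occurs at most once.
\item For a graph in $E_m$ the smallest eigenvalue is $-m$, and each $E_m$ is finite by Neumaier's classification. Hence only the finitely many graphs in $E_3\cup\cdots\cup E_{\lfloor M\rfloor}$ satisfy $\lambda_{\min}\ge -M$, again each occurring at most once.
\end{itemize}
Thus for all sufficiently large $n$ we have $\lambda_{\min}(G_n)<-M$, which is alternative 2.

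The main obstacle is bookkeeping about the meaning of $E_m$. We need that every graph in $E_m$ has smallest eigenvalue exactly $-m$ (with $m$ an integer) and that each $E_m$ is finite. I would cite Neumaier's theorem for this: a strongly regular graph with smallest eigenvalue $-m$, $m\ge 2$ an integer, is complete multipartite, a Steiner graph, a Latin square graph, or one of finitely many exceptions. The non-integral case consists only of conference graphs, and these were already handled by the bound $v\le 255$.

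Finally, the disconnected case $\mu=0$ (unions of cliques) has $\lambda_{\min}=-1$ and is excluded by the classification list, so it needs no separate treatment.
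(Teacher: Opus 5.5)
Your proposal is correct and matches the paper's route: the paper states the dichotomy as an immediate corollary of the classification theorem (Theorem~\ref{thm:classification-theorem-1}), and your argument supplies exactly the bookkeeping that ``immediate'' leaves out, namely discarding the finitely many Petersen, Shrikhande, small-conference and lattice cases, then using $\lambda_{\min}(K_{m,m})=-m$ and the finiteness of each $E_m$. The one implicit point, which you use correctly, is that ``pairwise distinct'' has to be read as pairwise non-isomorphic.
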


We prove the vanishing first homology results separately for the infinite families appearing in Neumaier's classification. We show that graphs arising from orthogonal arrays and block graphs of Steiner systems have triangle-boundary generated cycles space. We prove that the same is true for conference graphs on at least $256$ vertices. For strongly regular graphs with smallest eigenvalue $-2,$ Seidel's classification leaves only the Petersen graph, the Shrikhande graph, and the lattice graphs as possible graphs with a nontrivial $H_1$ group.

Our second main result (Theorem \ref{theorem:homology-latin-square}) gives considerably more information for Latin square graphs. If $M$ is a Latin square of order $n$, its Latin square graph has $n^2$ coordinate positions of $M$ as vertices, with two cells adjacent when they lie in the same row, lie in the same column, or contain the same symbol. An intercalate in $M$ is a $2 \times 2$ Latin subsquare of $M$, and we write $I(M)$ for the number of intercalates in $M$.

\begin{theorem*}[Homology of Latin square graphs]
    Let $M$ be a Latin square of order $n \geq 5$ and $G$ the associated strongly regular graph. Let $\FF$ be an arbitrary field. Denote by $I(M)$ the number of intercalates in $M$. Then, for $i > 0$,
    \begin{gather*}
        \dim(H_i(\clc{G}, \FF)) = \begin{cases}
            (n-1)^3 - I(M), & \text{if } i = 2\\
            0, & i=1 \text{ or } i \geq 3.
        \end{cases}
    \end{gather*}
\end{theorem*}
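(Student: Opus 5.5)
We would prove the formula by first pinning down $\clc{G}$ face by face. Then we get $H_1=0$ from the paper's orthogonal-array vanishing result, show by hand that $H_i=0$ for $i\ge 3$, and read off $\dim H_2$ from the Euler characteristic.

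\emph{Step 1 (the cliques).} Colour each edge of $G$ by ``row'', ``column'' or ``symbol''; the colour is unique since distinct cells share at most one of these attributes. Each colour class is an equivalence relation, so each colour class is a disjoint union of cliques. Hence no triangle can have exactly two colours, and every triangle is either monochromatic or rainbow. In particular, a non-monochromatic $K_4$ must have every one of its triangles rainbow. This forces each colour class to be a perfect matching of the four cells, which says precisely that the four cells form an intercalate. A $K_5$ cannot have all its triangles rainbow, so every clique on at least $5$ vertices lies inside a line. Consequently $\clc{G}$ is the union of the following pieces:
\begin{itemize}
\item the $3n$ full simplices on the lines (each of dimension $n-1$);
\item the rainbow triangles;
\item one tetrahedron for each intercalate.
\end{itemize}
To count rainbow triangles, choose the vertex $b$ where the row edge and the column edge meet ($n^2$ ways). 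Then choose $a$ in the row of $b$ ($n-1$ ways). The third vertex $c$ is forced: it is the unique cell in the column of $b$ carrying the symbol of $a$. This gives $n^2(n-1)$ rainbow triangles. Moreover a rainbow triangle determines its fourth intercalate cell, so each rainbow triangle lies in at most one tetrahedron.

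\emph{Step 2 (Euler characteristic).} Using $\sum_{k\ge 3}(-1)^{k-1}\binom{n}{k}=1-n+\binom n2$ for the faces of dimension $\ge 2$ inside lines, we get
\[
\chi(\clc{G}) = n^2-3n\binom n2+3n\Bigl(1-n+\binom n2\Bigr)+n^2(n-1)-I(M)=(n-1)^3+1-I(M).
\]

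\emph{Step 3 ($H_1$ and $H_0$).} Since $G$ is connected, $\dim H_0=1$. The vanishing $\hlcl{G}{\FF}=0$ for $n\ge5$ we would take from the paper's theorem that graphs from orthogonal arrays (here $\oa{3}{n}$) have triangle-boundary generated cycle space over every field. This is the genuinely hard input. If it were not available in the needed range, we would instead verify directly the $4$- and $5$-cycle common-neighbour condition of Theorem \ref{thm:4-vertex-common-neighbor-srg}. That verification would be the main obstacle.

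\emph{Step 4 ($H_i=0$ for $i\ge3$).} Let $z$ be an $i$-cycle with $i\ge 3$.
\begin{itemize}
\item For $i=3$, any intercalate tetrahedron in $z$ has a rainbow triangle face that appears in no other $3$-face of $\clc{G}$. Hence that tetrahedron has coefficient $0$ in $z$, so $z$ is supported on line simplices.
\item Two lines meet in at most one vertex, so every face of dimension $\ge1$ inside a line lies in a unique line. Therefore $z$ splits as a sum of $i$-cycles, one in each line simplex.
\item Each piece is a boundary because a simplex is acyclic, and hence $z$ is a boundary.
\end{itemize}
So $H_i=0$ for $i\ge 3$. The same splitting argument also shows these boundaries do not interfere across lines. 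Combining Steps 2–4 gives $\chi=1+\dim H_2$, hence $\dim H_2(\clc{G},\FF)=(n-1)^3-I(M)$, independently of $\FF$.
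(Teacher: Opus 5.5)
Your proof is correct, and for $H_2$ (and partly for $H_{\ge 3}$) it takes a genuinely different route from the paper.

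\textbf{Comparison with the paper.} After obtaining $\hlcl{G}{\FF}=0$ from Theorem~\ref{vanishing-latin-h1}, the paper works with ranks. It gets $\dim\ker\delta_1^T$ by rank--nullity from the triangle count $n^3(n-1)/2$ and $\dim\idt{1}{G}=\dim\kdt{0}{G}$. It computes $\dim\idt{2}{G}=3n\binom{n-1}{3}+I(M)$ by splitting $\delta_2^T$ into $3n$ copies of $\delta_2^T(K_n)$ (rank quoted from Meshulam--Newman--Rabinovich) plus $I(M)$ disjointly supported intercalate columns. For $i\ge 3$ it uses the analogous block decomposition of $\delta_i^T$.

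You instead classify all faces, compute $\chi$ from the face numbers, and kill $H_{\ge 3}$ at the chain level: private rainbow faces exclude intercalate tetrahedra from $3$-cycles, and the rest splits over lines, each a simplex. You then read $\dim H_2$ off Euler--Poincar\'e.

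Your route needs only the acyclicity of a simplex rather than the explicit rank of $\delta_2^T(K_n)$. It also actually proves the clique classification that the paper only asserts, and it spells out the $i=3$ case. The paper's route instead produces the field-independent rank of $\delta_2^T$ (Proposition~\ref{prop:dim-im-d-2t}), which it reuses for the Smith normal form and Laplacian spectra. Both depend on $H_1=0$ in the same way. Cite Theorem~\ref{vanishing-latin-h1} directly, since the orthogonal-array theorem for $t=3$ is just that result.

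\textbf{Two small corrections.}

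First, the fallback you mention in Step 3 would not work. Take an induced rectangular $4$-cycle $(x,y),(x,z),(t,z),(t,y)$; these exist for $n\ge 4$.
\begin{itemize}
\item A common neighbour in row $x$ would need the symbols of both $(t,z)$ and $(t,y)$, which differ.
\item Analogous contradictions arise for a common neighbour in row $t$ or in columns $y,z$.
\item A common neighbour off these two rows and two columns would have to share a symbol with all four cells, which is impossible.
\end{itemize}
So the hypothesis of Theorem~\ref{thm:4-vertex-common-neighbor-srg} fails for Latin square graphs. This is exactly why the paper needs the six-vertex identity of Proposition~\ref{prop:rectangular-4-cycle} and the separate inductions on rectangular and diagonal cycles. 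Drop that remark.

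Second, the ``in particular'' in Step 1 hides one line. A vertex outside a line has exactly two neighbours on it, one for each of the two other attributes. Hence a monochromatic triangle extends to a $K_4$ only within its own line. This gives your $K_4$ claim, and, with pigeonhole at a vertex of a $K_5$, the $K_5$ claim.
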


The present work grew out of \cite{cioaba-guo-ji-mim}, but its principal results and methods are different. The previous work emphasized spectra of higher Laplacians and real cohomology for selected families of strongly regular graphs. Here the new ingredients are the characteristic-free cycle-surgery method, the vanishing theorems for the families occurring in Neumaier's classification, the resulting global classification, the infinite-family dichotomy, and the complete homology calculation for Latin square graphs. In particular, the classification theorems above are not consquences of the spectral computations of \cite{cioaba-guo-ji-mim}.
\color{black}
The paper is organized as follows. 

In Section \ref{sec:notations-and-background}, we introduce the notations and terminology used throughout the paper and present several results related to the cycle space of a graph. In subsection \ref{subsec:a-vanishing-theorem}, we obtain a vanishing $H_1$ criteria for strongly regular graphs.

In Section \ref{sec:latin-square-graphs}, we prove our results regarding Latin square graphs. In particular, in subsection \ref{subsec:the-h1-of-clique-complex-of-latin-square-grpahs} we prove the vanishing $H_1$ result for Latin square graphs and in subsection \ref{theorem:h-2-latin-square-bounds} we show that $H_2$ counts the number of Latin subsquares or intercalates. In subsection \ref{subsec:higher-homologies-of-Latin-square-graphs}, we prove that all higher homologies vanish. 

In Sections \ref{sec:graphs-associated-with-orthogonal-arrays}, \ref{sec:block-graphs-of-steiner-systems}, and \ref{sec:conference-graphs}, we prove the vanishing results for graphs associated with orthogonal arrays, Steiner system block graphs, and conference graphs, respectively. 

In Section \ref{sec:strongly-regular-graphs-with-smallest-positive-eigenvalue--2}, we combine our vanishing results with Seidel's classification of strongly regular graphs with smallest adjacency eigenvalue $-2,$ and isolate graphs in this family that have non-vanishing $H_1$ to six explicit graphs and the lattice graphs. 

In Section \ref{sec:a-classification-theorem}, we combine our results with Neumaier's classification, and isolate strongly regular graphs with non-vanishing $H_1$ to explicit families and the exceptional families appearing in Neumaier's classification.

\section{Notations and Background} \label{sec:notations-and-background}

\subsection{Graphs and Simplicial Complexes} 

In this paper, graphs refer to finite, simple graphs, i.e., graphs without loops and multiple edges. Given a ground set $V,$ a simplicial complex $X$ on $V$ is a family of subsets of $V$ such that: if $F \in X$ and $E \subset F,$ then $E \in X.$ The elements in $X$ are called simplices, and the dimension of a simplex $F \in X$ is defined to be $|F|-1.$ The set of $i$-dimensional simplices of $X$, i.e., elements of $X$ with $(i+1)$ elements are denoted by $X_i.$ The dimension of the simplicial complex $X$ is defined to be the maximum dimension of a simplex in $X.$ 

Given a graph $G$, the clique complex of $G$, denoted $\text{Cl}(G),$ is the simplicial complex whose ground set is $V(G),$ and $F \subset V(G)$ is in $\text{Cl}(G)$ if and only if the vertices in $F$ form a clique in $G.$

\subsection{Operators on Simplicial Complexes}

Let $X$ be a $d$-dimensional simplicial complex on a totally ordered, finite ground set $V$ and let $i\in \{0,\ldots,d-1\}$. For $F\in X_i, K \in X_{i-1}$, we define the sign of $F$ over $K$ as:
\begin{gather*}
    [F:K] = \begin{cases}
        (-1)^j, & F=\{x_0,x_1,\ldots,x_i\}, x_0 < \dots < x_i, F\setminus K = \{x_j\},\\
        0, & \text{otherwise}.
    \end{cases}
\end{gather*}

The $i^{th}$ coboundary map $\delta_{i,X,\FF}$ is defined as the $M_{X_{i+1},X_i}(\FF)$ matrix:
\begin{gather*}
    (\delta_{i,X,\FF})_{F,K}=[F:K],
\end{gather*}
and the $i^{th}$ boundary matrix is defined as $\delta_{i,X,\FF}^T$. It is known that
\begin{gather*}
    \delta_{i+1, X, \FF} \circ \delta_{i, X, \FF}=0 \text{ and equivalently, } \delta_{i, X, \FF}^T \circ \delta_{i+1, X, \FF}^T=0, \quad \forall i\in \{0,\dots,d-1\}. 
\end{gather*}
For $i=1,\dots,d,$ we define the $i^{th}$ homology (group, module, vector space) and cohomology (group, module, vector space) by:
\begin{gather*}
    H_i(X,\FF)= \frac{\ker(\delta_{i-1, X, \FF}^T)}{\im(\delta_{i, X, \FF}^T)}, \quad H^i(X,\FF)= \frac{\ker(\delta_{i, X, \FF})}{\im(\delta_{i-1, X, \FF})},
\end{gather*}
respectively. These notions may be defined over any abelian groups or commutative rings, in particular, over $\Z.$ In this paper, we will not need this full generality. However, we will state several consequences of our developments to integral (co)boundary matrices and homology groups of clique complexes of graphs.  

When $G$ is a graph, we denote:
\begin{gather*}
    \delta_{i, G, \FF} = \delta_{i, \clc{G}, \FF}, 
\end{gather*}
for every $i$ for which these quantities are defined, and we will suppress the subscripts $G$ and/or $\FF$, and write $\delta_{i,\FF}^T$ or $\delta_{i}$ when there is no possibility of confusion.

\subsection{The Cycle Space of a Graph}

Following \cite[Definition 4.6]{Biggs}, we call $\kdt{0,R}{G}$ the cycle-(sub)space of $G$ over $R,$ or with coefficients in $R$. 
Note that in \cite[p.156]{cioaba-guo-ji-mim}, we inadvertently switched the definitions of cycle-subspace and cut-subspace. We also recall and extend the following definition from \cite[Definition 5.4]{cioaba-guo-ji-mim}.
\begin{defn}\label{def:TC}
Let $C=(v_1,\dots,v_{\ell})$ be an ordered tuple of vertices in $G$ such that $v_1,\dots,v_{\ell}$ form a cycle in $G$. Denoting $v_{\ell+1} = v_1$ and $e_i = \{v_i, v_{i+1}\}$ for $1 \leq i \leq \ell$, define
    \begin{gather}
        T(C) =  \sum_{i=1}^{\ell} c_i e_i \in R^{X_1},
    \end{gather}
    where the coefficients $c_1,\dots, c_{\ell}$ are defined recursively as follows:
    \begin{gather}\label{eq:coeffTC}
        c_i = \begin{cases}
            1, & \text{ if } i=1\\
            -c_{i-1}[e_{i-1} : v_i] \cdot [e_i:v_i], & \text{ if } 2\leq i \leq \ell.
        \end{cases}
\end{gather}
For simplicity, we will use $T(v_1,\ldots,v_{\ell})$ to denote $T(C)$ when necessary.
\end{defn}

In \cite[Lemma 5.5]{cioaba-guo-ji-mim}, we proved that $T(C) \in \ker(\delta_{0,R}^T)$ for $R=\re$. The same proof works verbatim for any ring $R$. In \cite{cioaba-guo-ji-mim} (see the discussion following Lemma 5.6), we also showed that for any graph $G$, $\ker(\delta_{0,R}^T)$ is generated by $\{T_R(C) : C \text{ an induced cycle in } G\}$ . Our argument there relied on Proposition 4.5 and Theorem 5.2 from \cite{Biggs} and also works verbatim for any ring $R$.  The following results were proved in \cite{cioaba-guo-ji-mim} for $\FF=\re,$ and the same proof is true for an arbitrary ring $R$.

\begin{lemma}\label{lemma:cycle-split-over-chord}
    Let $G$ be a graph, $C=(v_1,\dots,v_{\ell})$ a cycle of length $\ell \geq 4$, and $R$ a ring. Let $v_i,v_j$ be two non-consecutive vertices on $C$ such that $j > i$ and $v_i \sim v_j.$ Denote by $C_1$ and $C_2$ the cycles $C_1=(v_i,v_{i+1},\dots,v_j,v_i)$ and $C_2=(v_i,v_j,v_{j+1},\dots,v_{\ell},v_1,\dots,v_i).$ There are $c_1, c_2 \in \{1,-1\} \subset R\setminus \{0\}$ such that
    \begin{gather*}
        T_{R}(C) = c_1 T_{R}(C_1) + c_2 T_{R}(C_2). 
    \end{gather*}
\end{lemma}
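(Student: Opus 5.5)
The approach is to write $T_R(C)$, $T_R(C_1)$ and $T_R(C_2)$ as explicit signed edge sums and compare coefficients edge by edge. First I will rephrase Definition \ref{def:TC} in a form that avoids the recursion. For an edge $e=\{a,b\}$ with $a<b$, the boundary is $\delta_0^T e = b - a$, up to the fixed sign convention $[e:b]=-[e:a]$. The recursion $c_i = -c_{i-1}[e_{i-1}:v_i][e_i:v_i]$ says exactly that $c_i e_i$ is the edge $e_i$ oriented from $v_i$ to $v_{i+1}$, multiplied by the common sign $\epsilon=c_1[e_1:v_2]$. So $T_R(C)=\epsilon\sum_{i}\vec{e}(v_i\to v_{i+1})$, where $\vec e(a\to b)$ denotes $e$ if $a<b$ and $-e$ otherwise, and $\epsilon\in\{\pm1\}$ is determined by $e_1$. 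I will prove this closed form by induction on $i$, using $[e:v]\in\{\pm1\}$ so that $[e:v]^2=1$ holds in any ring.

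With this description the lemma becomes a bookkeeping statement. The oriented sum for $C_1$ traverses $v_i\to v_{i+1}\to\cdots\to v_j\to v_i$. The oriented sum for $C_2$ traverses $v_i\to v_j\to v_{j+1}\to\cdots\to v_\ell\to v_1\to\cdots\to v_i$. Adding the two, the chord appears once as $\vec e(v_j\to v_i)$ and once as $\vec e(v_i\to v_j)$, and these cancel. Every other edge appears exactly once, with the same orientation it has in $C$. Hence
\[
\sum_{C}\vec e = \sum_{C_1}\vec e+\sum_{C_2}\vec e ,
\]
and therefore $T_R(C)=\epsilon_C\bigl(\epsilon_{C_1}T_R(C_1)+\epsilon_{C_2}T_R(C_2)\bigr)$. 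This gives $c_1=\epsilon_C\epsilon_{C_1}$ and $c_2=\epsilon_C\epsilon_{C_2}$, both in $\{1,-1\}$.

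The one point needing care is that the cycles $C_1$ and $C_2$ are written with starting vertex $v_i$, not $v_1$. Changing the starting vertex, or running a cycle in the reverse direction, only changes the global sign $\epsilon$, and the closed form makes this immediate. The lemma claims nothing about which sign occurs, so this causes no difficulty.

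The main obstacle is the closed-form step, namely checking the recursion against the sign convention of $[F:K]$. For $e=\{x_0<x_1\}$ we have $[e:x_0]=-1$ and $[e:x_1]=1$, because removing $x_1$ has index $1$ and removing $x_0$ has index $0$. Take $c_{i-1}e_{i-1}$ oriented toward $v_i$, meaning its coefficient times $[e_{i-1}:v_i]$ is $+\epsilon$. Then $c_i[e_i:v_i]=-c_{i-1}[e_{i-1}:v_i]=-\epsilon$, so $c_ie_i$ is oriented away from $v_i$ with the same $\epsilon$. This closes the induction. The argument uses only the values $\pm1$ and the fact that $(\pm1)^2=1$, so it holds over any ring $R$.
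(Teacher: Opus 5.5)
Your proof is correct. The paper gives no argument for this lemma. It defers to the real-coefficient proof in \cite{cioaba-guo-ji-mim} and asserts that it carries over verbatim.

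The surgeries the paper does write out (Lemma \ref{lemma:two-cycle-join-vertex}, the cases in Theorem \ref{thm:4-vertex-common-neighbor} and Proposition \ref{prop:rectangular-cycles}) all follow one template:
\begin{itemize}
\item choose a sign so that a shared edge cancels;
\item use that the support of an element of $\ker\delta_0^T$ has no pendant edge, which pins the support down to a single cycle;
\item apply Proposition \ref{prop:TC} and compare one coefficient.
\end{itemize}

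You instead solve the recursion of Definition \ref{def:TC} in closed form: the coefficient of $e_k$ in $T_R(C)$ is $\epsilon_C[e_k:v_{k+1}]$, where $\epsilon_C=[e_1:v_2]$. So $T_R(C)$ is $\pm$ the oriented edge chain of $C$, and the lemma reduces to the cancellation of the two orientations of the chord. This has several advantages:
\begin{itemize}
\item it needs neither Proposition \ref{prop:TC} nor any support reasoning;
\item it gives the signs explicitly, namely $c_1=[e_1:v_2]\,[\{v_i,v_{i+1}\}:v_{i+1}]$ and $c_2=[e_1:v_2]\,[\{v_i,v_j\}:v_j]$;
\item it makes the ``arbitrary ring'' claim transparent, since the only algebra used is $(\pm1)^2=1$.
\end{itemize}
The same closed form proves Lemma \ref{lemma:cycle-split-over-path} in one line, with the path playing the role of the chord. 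The support template's advantage is that it never has to track orientations, which is presumably why the paper uses it for the surgeries that add several triangles at once.

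Your remark about changing the starting vertex is not needed. You apply the closed form directly to the tuples that define $C_1$ and $C_2$.
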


\begin{lemma} \label{lemma:cycle-split-over-path}
    Let $R$ be a ring. Let $G$ be a graph, $C=(v_1,\dots,v_{\ell})$ a cycle of length $\ell \geq 4.$ Let $P=(v_1,w_2,\dots,w_k,v_i)$ be a path in $G$ such that $P \cap C =\{v_1,v_i\}.$ Also, denote by cycles 
    \begin{gather*}
        C_1 = (v_1,\dots,v_i,w_k,\dots, w_2), \quad C_2 = (v_1,w_2,\dots,w_k,v_i,\dots, v_{\ell}).
    \end{gather*}
    Then, there are constants $c_1,c_2 \in \{1,-1\}$ such that 
    \begin{gather*}
        T(C) = c_1 T(C_1) + c_2 T(C_2).
    \end{gather*}
\end{lemma}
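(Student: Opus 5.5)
The plan is to compare coefficients edge by edge. Both sides of the identity are $R$-linear combinations of the edges of $C\cup P$, and every coefficient appearing in $T(\cdot)$ lies in $\{1,-1\}$. We will choose $c_1, c_2$ so that the coefficients of the edges of $P$ cancel, and then check that all remaining edges have matching coefficients.

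The first step is a structural reformulation of Definition~\ref{def:TC}. The recursion $c_i=-c_{i-1}[e_{i-1}:v_i][e_i:v_i]$ says exactly that, at each interior vertex $v_i$, the contributions of $c_{i-1}e_{i-1}$ and $c_ie_i$ to $\delta_0^T$ cancel. Concretely, with the orientation convention, $c_{i-1}[e_{i-1}:v_i] + c_i[e_i:v_i] = 0$, using $[e:v]^2=1$. In other words, $T(C)$ is the signed sum of the edges of $C$ oriented coherently along the traversal direction, up to a global sign fixed by $c_1=1$. We can verify this by a short induction: define the orientation-coherent chain $\vec{C}=\sum_i \epsilon_i e_i$, where $\epsilon_i=+1$ if $v_i<v_{i+1}$ and $-1$ otherwise, up to the sign convention of $[F:K]$. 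The recursion then shows $T(C)=\pm\vec{C}$. The same observation shows that reversing the traversal or cyclically rotating the starting vertex changes $T$ only by a sign $\pm1$.

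The second step is the decomposition itself. With this description, write $\vec{P}$ for the oriented path chain from $v_1$ to $v_i$ along $w_2,\dots,w_k$, write $\vec{A}$ for the arc $v_1\to v_2\to\dots\to v_i$ of $C$, and write $\vec{B}$ for the arc $v_i\to v_{i+1}\to\dots\to v_\ell\to v_1$. Then:
\begin{itemize}
\item $\vec C=\vec A+\vec B$;
\item $C_1$ traversed as given is $\vec A-\vec P$;
\item $C_2$ traversed as given is $\vec P+\vec B$.
\end{itemize}
Since $P\cap C=\{v_1,v_i\}$, the edge sets of $A$, $B$ and $P$ are pairwise disjoint. This disjointness makes these identities honest identities in $R^{X_1}$, with no accidental merging of coefficients. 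Hence $\vec C=\vec{C_1}+\vec{C_2}$. Translating back via $T(\cdot)=\pm\vec{(\cdot)}$ gives $T(C)=c_1T(C_1)+c_2T(C_2)$ with $c_1,c_2\in\{1,-1\}$.

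The main obstacle is bookkeeping. One must confirm that the recursive sign rule really yields the coherent orientation in any ring $R$, including characteristic $2$, where $1=-1$ and the claim becomes trivial. This requires care with $[e_i:v_i]$ versus $[e_i:v_{i+1}]$ and with the fact that for an edge $e=\{a,b\}$ we have $[e:a]=-[e:b]$. I would handle this with the local identity $c_{i}[e_i:v_{i+1}] = -c_i[e_i:v_i]$, from which $c_{i}$ times the "head" sign of $e_i$ equals $c_{i-1}$ times the "head" sign of $e_{i-1}$. So the quantity $c_i[e_i:v_{i+1}]$ is constant along the cycle, and this constancy is precisely the coherence statement.

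If one prefers to avoid orientations, the same argument can be phrased directly. Compare coefficients: on edges of $P$, the coefficients in $T(C_1)$ and $T(C_2)$ are $\pm$ each other by the constancy above, so the signs $c_1,c_2$ can be chosen to cancel them. On arc $A$ only $T(C_1)$ contributes, and on arc $B$ only $T(C_2)$ contributes, each agreeing with $T(C)$ up to one global sign per arc; again this follows from the constancy of the head sign along a traversal.
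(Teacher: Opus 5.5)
Your proof is correct, but it argues differently from the paper. The paper gives no argument for this lemma at all. It defers to the real-coefficient proof in the earlier paper and asserts that it carries over to any ring.

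The splitting arguments the paper does write out, for instance in Theorem~\ref{thm:4-vertex-common-neighbor} and Lemma~\ref{lemma:path-in-common-neighborhood}, are indirect:
\begin{enumerate}
\item add a signed multiple of a cycle to cancel one edge;
\item use $\delta_0^T$-closedness (no pendant edges) to force the remaining unwanted edges out of the support;
\item invoke Proposition~\ref{prop:TC} to get a scalar multiple of $T(C)$;
\item fix the scalar by comparing one coefficient.
\end{enumerate}

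You instead prove the closed form $T(C)=[e_1:v_2]\,\vec C$ from the invariance of $c_i[e_i:v_{i+1}]$ along the traversal. The lemma then becomes the formal chain identity $\vec C=\vec{C_1}+\vec{C_2}$. It even gives explicit constants, $c_1=1$ and $c_2=[\{v_1,v_2\}:v_2]\,[\{v_1,w_2\}:w_2]$.

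What your route buys:
\begin{itemize}
\item It needs neither Proposition~\ref{prop:TC} nor a support-propagation step. In the indirect approach, such a step is needed to see that cancelling one edge of $P$ cancels them all, and it is really the same sign invariance you isolate.
\item It makes independence from the characteristic evident.
\item It gives Lemma~\ref{lemma:cycle-split-over-chord}, and the invariance of $T$ up to sign under rotation and reversal, at no extra cost.
\end{itemize}
The paper's style, by contrast, has the merit of being uniform with its other cycle-surgery arguments.

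One small correction. The identities $\vec C=\vec A+\vec B$, $\vec{C_1}=\vec A-\vec P$ and $\vec{C_2}=\vec P+\vec B$ hold formally whether or not edge sets overlap, so disjointness is not what makes them valid. What the hypothesis $P\cap C=\{v_1,v_i\}$ actually supplies is that $C_1$ and $C_2$ are genuine cycles. That is what lets Definition~\ref{def:TC} apply to them, giving $T(C_j)=\pm\vec{C_j}$.
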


\begin{prop}\label{prop:TC}
Let $R$ be a ring and $C=(v_1,\dots,v_{\ell})$ an ordered tuple of vertices that form a cycle in $G$ with edges $e_1=\{v_1,v_2\},\ldots, e_{\ell-1}=\{v_{\ell-1},v_{\ell}\}$, and $e_{\ell}=\{v_{\ell},v_1\}$. If $x \in \ker \delta_0^T$ has support $\{e_1,\dots,e_{\ell}\}$, then $x = c \cdot T_R(C)$ for some non-zero $c$.
\end{prop}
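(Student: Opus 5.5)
We compare the coefficients of $x$ with those of $T_R(C)$ edge by edge. Write $x=\sum_{i=1}^{\ell} x_i e_i$ with every $x_i\neq 0$; by hypothesis all other coordinates of $x$ vanish. Set $c=x_1$. The aim is to show $x_i=c\,c_i$ for every $i$, where the $c_i$ are the coefficients of $T_R(C)$ from Definition \ref{def:TC}.

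\textbf{Local condition at a vertex.} Since $C$ is a cycle, its vertices are distinct, so each $v_i$ with $2\le i\le \ell$ lies on exactly two edges in the support of $x$, namely $e_{i-1}$ and $e_i$. The row of $\delta_0^T$ indexed by $v_i$ therefore gives
\[
x_{i-1}[e_{i-1}:v_i]+x_i[e_i:v_i]=0 .
\]
The signs $[e:v]$ lie in $\{1,-1\}$, so $[e_i:v_i]^2=1$. Multiplying by $[e_i:v_i]$ yields
\[
x_i=-x_{i-1}[e_{i-1}:v_i][e_i:v_i].
\]
This is exactly the recursion \eqref{eq:coeffTC}, scaled by $x_{i-1}$ in place of $c_{i-1}$.

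\textbf{Induction.} Starting from $x_1=c\,c_1$ (since $c_1=1$), induction on $i$ gives $x_i=c\,c_i$ for all $1\le i\le \ell$. Hence $x=c\,T_R(C)$, and $c=x_1\neq 0$ because $e_1$ lies in the support of $x$. The condition at $v_1$ is not needed to derive the equality. It is automatically consistent, since both $x$ and $T_R(C)$ lie in $\ker\delta_0^T$; for $T_R(C)$ this is the lemma quoted from \cite{cioaba-guo-ji-mim}.

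\textbf{Expected difficulty.} The only delicate point is working over a general ring $R$ rather than a field. No division is required: the step only multiplies by the units $\pm1$. The argument therefore holds verbatim over any ring, and there is no real obstacle beyond tracking the signs.
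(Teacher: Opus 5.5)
Your proposal is correct and follows the same approach the paper relies on. The paper defers to the proof in \cite{cioaba-guo-ji-mim}, and the same argument is sketched again in the proof of Lemma \ref{lemma:two-cycle-join-vertex}: the vertex conditions at $v_2,\dots,v_\ell$ force $x_i = x_1 c_i$ by exactly the recursion defining $T_R(C)$, and since only multiplication by the units $\pm 1$ is used, the argument holds over any ring.
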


\begin{theorem} \label{thm:dim-cycle-space}
    If $G$ is a graph with $n$ vertices, $m$ edges, and $c$ connected components, then for any field $\FF,$
    \begin{gather*}
        \dkdt{0,\FF}{G} = m-n+c.
    \end{gather*}
    That is, $\dkdt{0,\FF}{G}$ is the same for every field $\FF.$
\end{theorem}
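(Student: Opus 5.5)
The plan is the standard spanning-forest argument, phrased so that it works over any field. Since $\delta_{0}^T$ is the $X_0\times X_1$ incidence-type matrix of $G$, rank–nullity gives
\begin{gather*}
\dkdt{0,\FF}{G} = m - \operatorname{rank}_{\FF}(\delta_0^T),
\end{gather*}
so it suffices to show $\operatorname{rank}_{\FF}(\delta_0^T)=n-c$ for every field $\FF$. Each column of $\delta_0^T$, indexed by an edge $\{u,w\}$ with $u<w$, has exactly two nonzero entries. Since $[\{u,w\}:\{w\}]=(-1)^0=1$ and $[\{u,w\}:\{u\}]=(-1)^1=-1$, the column is $e_w-e_u$, up to the convention of which endpoint receives the sign. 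The entries lie in $\{0,\pm1\}$, and that is all we use.

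\emph{Upper bound.} Split the rows by connected components. For each component $K$, the sum of the rows indexed by vertices of $K$ is zero: every edge lies inside a single component and contributes $+1$ and $-1$ to that sum. This gives $c$ linear dependencies among the rows, supported on disjoint row sets, so they are linearly independent. Hence $\operatorname{rank}\le n-c$. The argument is valid in any characteristic, since it uses only $1+(-1)=0$.

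\emph{Lower bound.} Choose a spanning forest $F$ of $G$; it has $n-c$ edges. I will show that the columns indexed by $F$ are linearly independent over $\FF$ by induction on the number of edges. Suppose $\sum_{e\in F} a_e\,\delta_0^T e=0$ with $F$ nonempty. Take a leaf $v$ of $F$ and its unique incident forest edge $e$. The coordinate of the sum at $v$ is $\pm a_e$, so $a_e=0$. Delete $e$ and repeat on the smaller forest.

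Combining the two bounds gives $\operatorname{rank}=n-c$, and therefore $\dkdt{0,\FF}{G}=m-n+c$, which is independent of $\FF$. No step is a real obstacle. The only point needing care is the sign convention for the column entries, and it plays no role because we only use that those entries are $\pm1$, hence nonzero units in every field. Alternatively, one could cite \cite{Biggs} together with the fact that the incidence matrix is totally unimodular. The self-contained leaf argument is cleaner and makes the field independence evident.
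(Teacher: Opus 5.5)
Your proof is correct and complete. Rank--nullity reduces the claim to $\operatorname{rank}_{\FF}(\delta_0^T)=n-c$. The $c$ component indicator vectors are disjointly supported vectors in the left kernel, which gives the upper bound. Leaf-peeling shows that the $n-c$ spanning-forest columns are independent, which gives the lower bound. Both halves use only that the nonzero entries are $\pm 1$.

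The paper gives no proof of its own here. It records the statement as standard, pointing to \cite{Biggs} and to the real-coefficient version in \cite{cioaba-guo-ji-mim}, and remarks that the argument carries over verbatim.

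The standard argument gets the rank in one stroke by computing the left kernel exactly. If $x^T\delta_0^T=0$, then $x_u=x_w$ for every edge $\{u,w\}$. So $x$ is constant on components, and the left kernel is exactly the span of the $c$ indicators.

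You instead combine the easy half of that computation with a spanning-forest lower bound. This is equally characteristic-free, and it has the small advantage of producing explicit bases. The forest columns give a basis of the image. Correspondingly, the vectors $T(C_e)$ of the fundamental cycles of the $m-n+c$ non-forest edges give a basis of $\kdt{0,\FF}{G}$, since each non-forest edge occurs in exactly one of them.
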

We will also use the following three lemmas. We proved them for $R=\re$ in \cite[Lemma 5.7]{cioaba-guo-ji-mim}, and the proof works for any ring $R.$

\begin{lemma}[Lemma 5.9, \cite{cioaba-guo-ji-mim}]\label{prop:wheel4}
Let $G$ be a graph with four vertices $a,b,c,d$ forming a cycle $(a,b,c,d)$. If $e$ is a common neighbor of $a,b,c$, and $d$, then there are $x_1,x_2,x_3,x_4 \in \{1, -1\} \subset R$ such that 
    \begin{gather}
        T_R(a,b,c,d) = \delta_{1,R}^T (x_1 \{a,b,e\} + x_2 \{b,c,e\}  + x_3 \{c,d,e\}  + x_4 \{a,d,e\}).
    \end{gather}
    Consequently, $T_R((a,b,c,d))) \in \im \delta_{1,R}^T.$ 
\end{lemma}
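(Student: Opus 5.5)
The plan is a direct computation. I will show that the signed sum of the four triangles through the apex $e$ has boundary equal to $\pm T_R(a,b,c,d)$. Write the chain as $y = x_1\{a,b,e\}+x_2\{b,c,e\}+x_3\{c,d,e\}+x_4\{a,d,e\}$, with signs $x_i\in\{1,-1\}$ still to be chosen. By the definition of $\delta_{1,R}^T$, the boundary $\delta_{1,R}^T(y)$ is a combination of the rim edges $\{a,b\},\{b,c\},\{c,d\},\{a,d\}$ and the spokes $\{a,e\},\{b,e\},\{c,e\},\{d,e\}$. Each rim edge lies in exactly one of the four triangles, so its coefficient is $\pm x_i$. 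Each spoke lies in exactly two consecutive triangles: $\{b,e\}$ in the first two, $\{c,e\}$ in the second and third, $\{d,e\}$ in the third and fourth, and $\{a,e\}$ in the fourth and first.

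First I choose $x_1=1$. Then I choose $x_2,x_3,x_4$ successively so that the spokes $\{b,e\}$, $\{c,e\}$, $\{d,e\}$ cancel. This is always possible, since each cancellation condition is a single equation $x_{i+1}=-x_i[\cdot:\cdot][\cdot:\cdot]$ with unit signs. After these choices, $\delta_{1,R}^T(y)$ is supported on the four rim edges together with possibly the spoke $\{a,e\}$.

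The main obstacle is that last spoke. Its coefficient is forced, and I must show it vanishes. I would avoid a sign chase over all vertex orderings by using that $\delta_{0,R}^T\delta_{1,R}^T=0$. This gives $\delta_{1,R}^T(y)\in\ker\delta_{0,R}^T$. If $\{a,e\}$ had a nonzero coefficient, then $a$ would meet at most two other edges of the support, namely $\{a,b\}$ and $\{a,d\}$. The vertex $e$, however, would meet only the single support edge $\{a,e\}$, because the other spokes have cancelled. The boundary of this vector at $e$ would then be $\pm$ that coefficient, which is nonzero, a contradiction. So the $\{a,e\}$ coefficient is $0$.

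Hence $\delta_{1,R}^T(y)$ is an element of $\ker\delta_{0,R}^T$ supported on the edges of the cycle $(a,b,c,d)$. Its rim coefficients are $\pm1$, so they are nonzero and the support is exactly the full cycle. By Proposition \ref{prop:TC}, $\delta_{1,R}^T(y)=c\,T_R(a,b,c,d)$ for some $c$. Comparing the coefficient of $e_1=\{a,b\}$, which is $1$ in $T_R$ and $\pm1$ here, gives $c=\pm1$. Replacing every $x_i$ by $c x_i$ then yields exactly the identity in the statement, and in particular $T_R((a,b,c,d))\in\im\delta_{1,R}^T$.

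I want this argument to hold over any ring $R$, so I need Proposition \ref{prop:TC} in that generality. I would verify it directly. Kernel membership at each vertex of the cycle forces the coefficients on consecutive edges to satisfy exactly the recursion (\ref{eq:coeffTC}), scaled by the first coefficient.
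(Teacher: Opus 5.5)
Your argument is correct. Choosing $x_2,x_3,x_4$ successively as units cancels the spokes $\{b,e\},\{c,e\},\{d,e\}$, and the $e$-coordinate of $\delta_{0,R}^T\delta_{1,R}^T(y)=0$ then forces the $\{a,e\}$ coefficient to vanish; Proposition \ref{prop:TC}, which indeed holds over any ring with scalar equal to the $\{a,b\}$-coefficient, followed by rescaling by $c=\pm1$, gives exactly the stated identity. The paper does not reprove this lemma (it defers to \cite{cioaba-guo-ji-mim}), but your route is the same cancel, prune pendant edges, and invoke Proposition \ref{prop:TC} technique it uses for its other local identities, such as the base case of Lemma \ref{lemma:path-in-common-neighborhood}, and it is ordering-free, so it needs no fixed-ordering computation with a signature-matrix transfer as in Proposition \ref{prop:rectangular-4-cycle}.
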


\begin{corollary}[Corollary 5.13, \cite{cioaba-guo-ji-mim}]\label{cor:TCinduced}
Let $\FF$ be a field. The elements of the set 
\begin{equation*}
    \{T_{\FF}(C): C=(v_1,\dots,v_{\ell}) \text{ a ordered tuple of vertices in } G \text{ such that } v_1,\dots,v_{\ell} \text{ is an induced cycle} \}
\end{equation*}   
span $\kdt{0,\FF}{G}$.
\end{corollary}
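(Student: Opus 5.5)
The plan is to prove Corollary \ref{cor:TCinduced} by induction on the number of chords, using only the facts collected above. By the discussion following Definition \ref{def:TC}, $\ker(\delta_{0,\FF}^T)$ is spanned by the vectors $T_\FF(C)$, where $C$ ranges over all cycles of $G$. This is the standard fact that the cycle space is generated by cycle vectors, obtained from \cite[Proposition 4.5, Theorem 5.2]{Biggs}. It therefore suffices to show that each $T_\FF(C)$, for an arbitrary cycle $C$, lies in the span $S$ of $\{T_\FF(C') : C' \text{ induced}\}$.

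I would argue by strong induction on the length $\ell$ of $C$. If $\ell=3$, then $C$ is a triangle, which is automatically an induced cycle. Now suppose $\ell\ge 4$.

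If $C$ is induced, there is nothing to prove. Otherwise $C$ has a chord $v_iv_j$, where $v_i,v_j$ are non-consecutive and $i<j$. Lemma \ref{lemma:cycle-split-over-chord} then gives
\begin{gather*}
T_\FF(C)=c_1T_\FF(C_1)+c_2T_\FF(C_2)
\end{gather*}
with $c_1,c_2\in\{\pm1\}$. Because the chord joins non-consecutive vertices, each of $C_1$ and $C_2$ uses at least one vertex fewer than $C$. Each is still a genuine cycle of length at least $3$, since the chord together with one of the two arcs of $C$ closes up a cycle. By the induction hypothesis both $T_\FF(C_1)$ and $T_\FF(C_2)$ lie in $S$, and hence so does $T_\FF(C)$.

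Finally, the ordering convention in the statement is harmless. Reordering or reversing the tuple of the same vertex cycle changes $T_\FF$ only by a sign, by Proposition \ref{prop:TC}. Any such rewritten vector is again in $\ker\delta_0^T$ with the same support, so it is a nonzero multiple of the original, and the multiple is $\pm1$ because all coefficients of $T$ lie in $\{\pm1\}$.

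The main point needing care is the base input that arbitrary cycle vectors span the cycle space over an arbitrary field. I would justify it via Theorem \ref{thm:dim-cycle-space}. Fix a spanning forest; each of the $m-n+c$ non-tree edges $e$ determines a fundamental cycle $C_e$. The vector $T_\FF(C_e)$ has a nonzero coefficient on $e$, and $e$ appears in no other fundamental cycle. Hence these $m-n+c$ vectors are linearly independent in $\ker\delta_0^T$, whose dimension is exactly $m-n+c$, so they span it. Combined with the reduction above, this proves the corollary.
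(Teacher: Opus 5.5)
Your proof is correct and takes the same route the paper points to: the paper defers to the earlier work's argument, which uses Biggs' fundamental-cycle basis \cite{Biggs} to show that cycle vectors span $\kdt{0,\FF}{G}$, and then splits along chords with Lemma~\ref{lemma:cycle-split-over-chord} by induction on length. One caution: the discussion after Definition~\ref{def:TC} actually asserts the induced-cycle statement itself, so citing it for the all-cycles spanning fact would be circular; your fundamental-cycle argument via Theorem~\ref{thm:dim-cycle-space} avoids this, and it is what makes your proof self-contained.
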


\begin{lemma}[Lemma 5.10, \cite{cioaba-guo-ji-mim}]\label{prop:supp_red}
Let $G$ be a graph and $R$ a ring. Assume that $C=(a,b,c,d,v_5,\dots,v_{\ell})$ is a cycle of length four or more, and $a,b,c,d$ have a common neighbor $e$. Denote $e_1 = \{a,b\}, e_2 = \{b,c\}, e_3 = \{c,d\}$, and $c_1 = [e_1, T_R(C)], c_2 = [e_2, T_R(C)], c_3 = [e_3, T_R(C)].$ There are $c'_4,c'_5,c'_6,x_1, x_2\in \{-1,1\}$ such that
    \begin{gather}
        c_1e_1 + c_2e_2 + c_3e_3 + \delta_{1,R}^T(c'_4 \{a,b,e\} + c'_5 \{b,c,e\} + c'_6 \{c,d,e\}) = x_1 \{a,e\} + x_2 \{d,e\}.
    \end{gather}
\end{lemma}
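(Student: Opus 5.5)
The plan is to compute directly with the coboundary signs, working one triangle at a time. For each $j=1,2,3$, the edge $e_j$ together with the apex $e$ spans a triangle $t_j\in \clc{G}_2$; write $t_1=\{a,b,e\}$, $t_2=\{b,c,e\}$, $t_3=\{c,d,e\}$. Its boundary is
\begin{gather*}
\delta_{1,R}^T(t_j)=\sum_{K\subset t_j,|K|=2}[t_j:K]\,K .
\end{gather*}
This is a signed sum of three edges, each with coefficient in $\{1,-1\}$. For $t_1$ the edges are $e_1$, $\{a,e\}$ and $\{b,e\}$, and the other two triangles are analogous.

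First, choose $c'_4$ so that the coefficient of $e_1$ cancels: $c'_4=-c_1[t_1:e_1]^{-1}=-c_1[t_1:e_1]$. This is possible because $c_1=\pm1$, as $T_R(C)$ has coefficients $\pm1$ by Definition \ref{def:TC}. Likewise set $c'_5=-c_2[t_2:e_2]$ and $c'_6=-c_3[t_3:e_3]$. After this, the left-hand side is supported only on the spokes $\{a,e\},\{b,e\},\{c,e\},\{d,e\}$. The spokes $\{a,e\}$ and $\{d,e\}$ each appear in exactly one triangle, so they receive coefficients $x_1=c'_4[t_1:\{a,e\}]$ and $x_2=c'_6[t_3:\{d,e\}]$, both in $\{\pm1\}$.

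The main step is to show that the coefficients of $\{b,e\}$ and $\{c,e\}$ vanish. I would avoid case-checking orderings of $a,b,c,d,e$ and argue via the kernel of $\delta_{0}^T$ instead. Consider the chain
\begin{gather*}
y=c_1e_1+c_2e_2+c_3e_3+\delta_1^T(\cdots).
\end{gather*}
Since $\delta_0^T\delta_1^T=0$, we have $\delta_0^T y=\delta_0^T(c_1e_1+c_2e_2+c_3e_3)$. This is supported on $\{a,d\}$ only: the recursion \eqref{eq:coeffTC} defining the $c_i$ is precisely the condition that the path $a,b,c,d$ contributes zero at the interior vertices $b$ and $c$. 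Now look at vertex $b$. In $y$, the only edges incident to $b$ are $e_1$ and $e_2$, which have been cancelled, and the spoke $\{b,e\}$. Hence $(\delta_0^T y)_b=\pm(\text{coefficient of }\{b,e\})$, and this value is $0$. So the coefficient of $\{b,e\}$ vanishes, and the same argument at $c$ kills the coefficient of $\{c,e\}$.

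This argument works over any ring, because each edge has boundary $\pm$(vertex) with unit coefficients. It therefore gives exactly the identity claimed, with all constants in $\{-1,1\}$.
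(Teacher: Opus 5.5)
Your proof is correct and complete. Choosing $c'_4,c'_5,c'_6$ to cancel $e_1,e_2,e_3$ is forced. The vanishing of the coefficients of $\{b,e\}$ and $\{c,e\}$ works exactly as you say. By $\delta_0^T\delta_1^T=0$, the $b$-coordinate of $\delta_0^T y$ equals $c_1[e_1:b]+c_2[e_2:b]$, and this is $0$: multiply the recursion $c_2=-c_1[e_1:b][e_2:b]$ from \eqref{eq:coeffTC} by $[e_2:b]$. In $y$ itself, the only surviving edge at $b$ is $\{b,e\}$, and its incidence sign is a unit. The same reasoning applies at $c$. Two facts you use implicitly are worth stating. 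First, $e\notin\{a,b,c,d\}$, since it is adjacent to all four. Hence the $e_j$ are not spokes, $\{a,e\}$ lies only in $t_1$, and $\{d,e\}$ lies only in $t_3$. Second, in fact $c_1=1$ by Definition \ref{def:TC}, though nothing depends on this.

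There is nothing in the paper to compare your argument against line by line. The paper does not prove this lemma: it imports it from \cite{cioaba-guo-ji-mim}, where it is proved over $\re$, and only asserts that the argument transfers to any ring. Your proof supplies a self-contained, characteristic-free argument. Its key step is the support principle the paper uses in all its surgery arguments, for example in the proofs of Theorem \ref{thm:4-vertex-common-neighbor} and Lemma \ref{lemma:path-in-common-neighborhood}: if only one support edge meets a vertex at which the boundary vanishes, that edge's coefficient is zero.

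You apply this principle to the truncated chain $y$, using $\delta_0^T y=\delta_0^T(c_1e_1+c_2e_2+c_3e_3)$, rather than to a full cycle-space element such as $T_R(C)+\delta_1^T(\cdots)$. This keeps the computation local. You never have to describe the support of such an element, which changes shape when $e=v_5$ or $e=v_\ell$. You also avoid checking signs over the relative orderings of $a,b,c,d,e$; compare the reordering device with $S_1,S_2$ in Proposition \ref{prop:rectangular-4-cycle}. Since only the units $\pm1$ are ever inverted, your argument justifies exactly the ``any ring'' claim that the paper makes without proof.
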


\begin{lemma} \label{lemma:two-cycle-join-vertex}
    Let $G$ be a graph, $C_1=(v_1,v_2,\dots,v_s)$ and $C_2=(v_1,v_2'\dots,v_t')$ two cycles in $G$ sharing exactly one vertex $v_1$. Let $R$ be a ring, and $\al \in \kdt{0,R}{G}$ whose support consists of precisely the edges in $C_1$ and $C_2$. There are constants $c_1, c_2 \in R \setminus \{0\}$ such that
    \begin{gather*}
        \al = c_1T(C_1) + c_2 T(C_2).
    \end{gather*}
    Moreover, if each edge (coordinate) in $\al$ has coefficient in $\{1,-1\},$ then $c_1, c_2$ are necessarily in $\{1,-1\}$ as well.
\end{lemma}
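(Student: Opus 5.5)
The approach is to use the vertex-wise boundary condition $\delta_0^T\al=0$ together with the disjointness of the two cycles away from $v_1$. The idea is to propagate coefficients along each cycle separately, showing that the restriction of $\al$ to the edges of $C_i$ is a multiple of $T(C_i)$.

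Write $\al = \sum_{e} a_e e$ and let $\al_1$ (resp.\ $\al_2$) denote the part of $\al$ supported on the edges of $C_1$ (resp.\ $C_2$). Since $C_1$ and $C_2$ share only $v_1$, they share no edge, so $\al=\al_1+\al_2$.

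\textbf{Step 1: the condition at a vertex $v_j\neq v_1$ of $C_1$.} Such a vertex lies on no edge of $C_2$. The only support edges at $v_j$ are $e_{j-1}=\{v_{j-1},v_j\}$ and $e_j=\{v_j,v_{j+1}\}$, so the coordinate of $\delta_0^T\al$ at $v_j$ is
\[
a_{e_{j-1}}[e_{j-1}:v_j]+a_{e_j}[e_j:v_j]=0.
\]
Because the signs are $\pm1$, this gives $a_{e_j}=-a_{e_{j-1}}[e_{j-1}:v_j][e_j:v_j]$. This is exactly the recursion \eqref{eq:coeffTC} defining the coefficients of $T(C_1)$, starting from $e_1=\{v_1,v_2\}$. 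By induction on $j=2,\dots,s$, every coefficient satisfies $a_{e_j}=a_{e_1}\,c_j$, where $c_j$ is the $j$-th coefficient of $T(C_1)$. Hence $\al_1=a_{e_1}T(C_1)$. The same argument for $C_2$ gives $\al_2=a_{e'_1}T(C_2)$, where $e'_1=\{v_1,v_2'\}$.

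\textbf{Step 2: nonvanishing and the $\pm1$ case.} Set $c_1=a_{e_1}$ and $c_2=a_{e'_1}$. These are nonzero because $e_1$ and $e'_1$ lie in the support of $\al$. If every coefficient of $\al$ is $\pm1$, then $c_1,c_2\in\{1,-1\}$ directly, since they are themselves coefficients of $\al$.

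\textbf{The one subtle point} is that the recursion in Step 1 only determines coefficients of $\al_1$ up to the edge $e_s=\{v_s,v_1\}$; the closing condition at $v_1$ is never used. No consistency check is needed there, because we only ever express $\al_1$ through the defining recursion of $T(C_1)$, which itself is built edge by edge up to $e_s$. Over a general ring one must also avoid dividing: each step uses only that $[e_j:v_j]\in\{\pm1\}$ is a unit with square $1$, so no division is required.

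Alternatively, one can derive the result from Proposition~\ref{prop:TC}. We have $\delta_0^T\al_1=0$ because $\delta_0^T\al_1$ is supported only at $v_1$, and the sum of the coordinates of $\delta_0^T$ of any edge vector is $0$. That same observation handles $v_1$ in the direct argument. However, the direct recursion argument above is cleaner and works over any ring.
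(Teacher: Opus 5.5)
Your proof is correct and follows the paper's argument: split $\alpha$ into its $C_1$- and $C_2$-parts, and use the vanishing of $\delta_0^T\alpha$ at $v_2,\dots,v_s$ (resp.\ $v_2',\dots,v_t'$) to propagate the coefficients along each cycle, since only edges of one cycle meet those vertices. The only difference is that you read off $\alpha_1=a_{e_1}T(C_1)$ directly from the defining recursion of $T(C_1)$, whereas the paper first shows $\beta\in\ker\delta_0^T$ and then invokes Proposition~\ref{prop:TC}; your column-sum remark cleanly justifies that intermediate step, which the paper only cites from earlier work.
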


\begin{proof}
    For below, the indices are considered $\mod s$ or $\mod t,$ for the corresponding cycles. We prove the first statement first. Let:
    \begin{gather*}
        \al = \sum_{i=1}^s a_i \{v_i,v_{i+1}\} + \sum_{j=1}^t b_j \{v_j',v_{j+1}'\},
    \end{gather*}
    where the two sums aggregate the terms involving the edges of $C_1$ and $C_2,$ respectively. Consider $\beta := \sum_{i=1}^s a_i \{v_i,v_{i+1}\}.$ For $2 \leq i \leq s,$ the two edges in the support of $\al$ contributing to $v_i$ in the expansion of $\delta_0^T(\al)$ are both contained in the sum $\beta.$ Thus,
    \begin{gather*}
        0= [v_i, \delta_0^T(\al)] = [v_i, \delta_0^T(\beta)].
    \end{gather*}
    Thus, as seen in the proof of Lemma 5.5 in \cite{cioaba-guo-ji-mim}, the coefficients $a_2,\dots,a_s$ are all determined by the necessary conditions (following the same notations as in \cite{cioaba-guo-ji-mim}):
    \begin{gather*}
        a_i[\{v_{i}, v_{i+1}\} : v_i] = a_{i+1}[\{v_{i+1}, v_{i+2}\} : v_{i+1}].
    \end{gather*}
    Following the argument in the proof of the lemma in \cite{cioaba-guo-ji-mim} verbatim, these constraints imply
    \begin{gather*}
        \beta = \sum_{i=1}^s a_i \{v_i,v_{i+1}\} \in \ker \delta_0^T.
    \end{gather*}
    Thus, $\beta$ is a $\ker \delta_0^T$ element whose support edges are precisely the edges of the cycle $C_1$. By Lemma \ref{prop:TC}, there is some constant $c_1$ with $\beta = c_1 T(C_1).$ By a similar argument, we can show that there is some constant $c_2$ such that $\sum_{j=1}^t b_i \{v_j',v_{j+1}'\} = c_2T(C_2).$ Putting these together, we have
    \begin{gather*}
        \al = \sum_{i=1}^s a_i \{v_i,v_{i+1}\} + \sum_{j=1}^t b_i \{v_j',v_{j+1}'\} = c_1 T(C_1) + c_2 T(C_2).
    \end{gather*}
    This proves the first statement. Finally, if all the coefficients $a_i,b_j$ are in $\{1,-1\}$, then, since all coefficients in $T(C_1)$ and $T(C_2)$ are in $\{1,-1\}$ and they do not share any edge (nonzero-coordinate), we conclude that $c_1,c_2 \in \{1,-1\}.$ This proves the second statement.
\end{proof}

\subsection{Two Vanishing Theorems} \label{subsec:a-vanishing-theorem}

The following theorem is implicit in our previous paper \cite{cioaba-guo-ji-mim} (Theorem 5.4) for $R=\re$. We strengthen the version appearing in \cite{cioaba-guo-ji-mim} and show that the result holds over arbitrary fields.

\begin{theorem}\label{thm:4-vertex-common-neighbor}
If $G$ is a graph with the property that any induced cycle (of length at least four) has four consecutive vertices that have a common neighbor in $G$ and $\FF$ is a field, then the clique complex of $G$ satisfies $\kdt{0,\FF}{G}=\idt{1,\FF}{G}.$ Equivalently, $\hlcl{G}{\FF}=0.$
\end{theorem}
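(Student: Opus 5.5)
Since $\idt{1,\FF}{G}\subseteq\kdt{0,\FF}{G}$ always holds ($\delta_0^T\delta_1^T=0$), only the reverse inclusion needs proof. By Corollary \ref{cor:TCinduced}, $\kdt{0,\FF}{G}$ is spanned by the vectors $T(C)$ with $C$ an induced cycle, and a triangle $C$ gives $T(C)=\pm\delta_1^T(\{v_1,v_2,v_3\})$. It therefore suffices to show $T(C)\in\idt{1,\FF}{G}$ for every cycle $C$ of length $\ell\geq 4$. I will prove this for all cycles, not only induced ones, by strong induction on $\ell$, with triangles as the base case.

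Let $C=(v_1,\dots,v_\ell)$ with $\ell\geq 4$. If $C$ has a chord $v_iv_j$, Lemma \ref{lemma:cycle-split-over-chord} gives $T(C)=\pm T(C_1)\pm T(C_2)$, where $C_1,C_2$ are strictly shorter cycles. Induction then finishes this case.

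Now suppose $C$ is induced. By hypothesis, after cyclically relabelling (and possibly reversing) we may take $a=v_1,b=v_2,c=v_3,d=v_4$ to have a common neighbour $e$. Since $C$ is induced, $e\notin C$: every vertex of $C$ has only two neighbours on $C$, while $e$ is adjacent to four vertices of $C$.
\begin{itemize}
\item If $\ell=4$, Lemma \ref{prop:wheel4} gives $T(C)\in\idt{1,\FF}{G}$ directly.
\item If $\ell\geq 5$, apply Lemma \ref{prop:supp_red}. Adding $\delta_1^T$ of the three signed triangles $\{a,b,e\},\{b,c,e\},\{c,d,e\}$ to $T(C)$ replaces the path $a,b,c,d$ by the path $a,e,d$. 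The result is
\[
\beta \;=\; T(C)+\delta_1^T(\cdots) \;=\; x_1\{a,e\}+x_2\{d,e\}+\sum_{i=4}^{\ell} c_i\,e_i .
\]
\end{itemize}

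The vector $\beta$ lies in $\kdt{0,\FF}{G}$. Its support is exactly the edge set of the cycle $C'=(a,e,d,v_5,\dots,v_\ell)$, which has length $\ell-1$ and is a genuine cycle because $e\notin C$. By Proposition \ref{prop:TC}, $\beta=c\,T(C')$ for some nonzero $c$. By induction $T(C')\in\idt{1,\FF}{G}$, hence $T(C)\in\idt{1,\FF}{G}$.

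The induction is over all cycles of $G$, with the hypothesis used only at induced cycles. This matters because $C'$ need not be induced. If it is not, it is handled by the chord-splitting step, which produces shorter cycles.

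The main point to check is the reduction step. One must verify that the support of $\beta$ is exactly the edge set of $C'$, so that Proposition \ref{prop:TC} applies. This needs the edges $\{a,e\},\{d,e\}$ to be distinct from the remaining cycle edges and to have nonzero coefficient, which holds since $e\notin C$ and $x_1,x_2=\pm1$. Proposition \ref{prop:TC} is stated over a ring, but the induction is run over the field $\FF$, so nonzero scalars are invertible and no characteristic issue arises.
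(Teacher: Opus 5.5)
Your proof is correct. Its core step is the paper's: for an induced cycle with consecutive vertices $a,b,c,d$ having a common neighbour $e\notin C$, Lemma \ref{prop:supp_red} and Proposition \ref{prop:TC} trade $T(C)$ for $T(C')$, where $C'=(a,e,d,v_5,\dots,v_\ell)$ is one edge shorter. This is exactly the paper's Case (1), and Lemma \ref{prop:wheel4} handles $\ell=4$.

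You differ in how the induction is organised. For $\ell\ge 6$, the paper simply assumes without loss of generality that some $v$ is adjacent to $v_1,\dots,v_4$. It then case-splits on whether $v$ lies on $C$: $v=v_5$, $v=v_\ell$, or $v=v_i$ with $6\le i\le \ell-1$, the last using Lemma \ref{lemma:two-cycle-join-vertex}. It treats chords explicitly only for $\ell\le 5$. You instead split every non-induced cycle along a chord and use the hypothesis only for induced cycles, where you rightly note the common neighbour cannot lie on $C$.

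Your organisation is shorter and also more complete. The paper's on-cycle cases can only arise for non-induced cycles; for instance, its case $v=v_5$ for an induced $5$-cycle is vacuous. Yet the paper's inductive claim must cover all cycles, since it is applied to shorter cycles such as $C_4$ that need not be induced. For a non-induced cycle of length at least $6$, the hypothesis does not guarantee any such $v$. Your chord-splitting step is exactly what is needed for that induction to go through.

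The only point you leave implicit is that cyclic relabelling or reversal changes $T(C)$ by at most a sign, by Proposition \ref{prop:TC}, which is harmless.
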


\begin{proof}
 It suffices to prove that if $C$ is a cycle in $G$, then $T_{\FF}(C) \in \idt{1,\FF}{G}.$ We prove this by induction on the length of $C.$ Let $C=(v_1,\dots,v_{\ell}).$ The claim is immediate for $\ell=3.$ If $\ell=4$ and $C$ is not induced, the claim follows by Lemma \ref{lemma:cycle-split-over-chord}. If $\ell=4$ and $C$ is induced, then there exists $v \in V(G)$, necessarily distinct from $v_i,i=1,2,3,4,$ such that $v \sim v_i, i=1,2,3,4.$ By Proposition \ref{prop:wheel4}, $T_{\FF}(C) \in \idt{1,\FF}{G}.$ Assume now $\ell=5$. If $C$ is not induced, the claim is again immediate by Lemma \ref{lemma:cycle-split-over-chord} and the discussions above. We now assume that $C$ is induced. Without loss of generality, we may also assume that there exists a vertex $v$ in $G$ such that $v \sim v_i,i=1,2,3,4.$ We consider two cases: $v=v_5$, and $v$ is not on the cycle $C.$ If $v=v_5,$ denote
    \begin{gather*}
        C_1=(v_1,v_2,v_5), \quad C_2=(v_2,v_3,v_5), \quad C_3=(v_3,v_4,v_5),
    \end{gather*}
    and,
    \begin{gather*}
        c_1 = -[\{v_1,v_2\}, T_{\FF}(C)]\cdot  [\{v_1,v_2\},T_{\FF}(C_1)], \quad c_3= -[\{v_3,v_4\},T_{\FF}(C)] \cdot [\{v_3,v_4\}, T_{\FF}(C_3)].
    \end{gather*}
    Denote
    \begin{gather*}
        X = T_{\FF}(C)+c_1T_{\FF}(C_1)+c_3T_{\FF}(C_3) \in \kdt{0,\FF}{G}.
    \end{gather*}
    By construction of $c_1$ and $c_3,$ the terms $\{v_1,v_2\}$ and $\{v_3,v_4\}$ vanish in $X$. Thus, the support edges of $X$ consists of the edges of $C_2,$ possibly along with the edges $\{v_1,v_5\}$ and $\{v_4,v_5\}.$ However, since $X \in \kdt{0,\FF}{G},$ the edges $\{v_1,v_5\}$ and $\{v_4,v_5\}$ cannot be in the support of $X$. Hence the $\kdt{0,\FF}{G}$ element $X$ has the same support as the triangle $C_2,$ and by Proposition \ref{prop:TC}, there exists $c_2 \in \FF$ such that
    \begin{gather*}
         T_{\FF}(C)+c_1T_{\FF}(C_1)+c_3T_{\FF}(C_3)=X=c_2T_{\FF}(C_2) \text{ or,}\\
         T_{\FF}(C) = -c_1T_{\FF}(C_1)-c_3T_{\FF}(C_3)+c_2T_{\FF}(C_2) \in \idt{1,\FF}{G}.
    \end{gather*}
    We now consider the case $v \not = v_5,$ i.e., $v$ is not on the cycle $C.$ Denote:
    \begin{gather*}
        C_1=(v,v_1,v_2), \quad C_2=(v,v_2,v_3), \quad C_3=(v,v_3,v_4), \quad C_4=(v_1,v,v_4,v_5).
    \end{gather*}
    By Proposition \ref{prop:supp_red}, there exists $c_1,c_2,c_3 \in \FF$ such that the support of the $\kdt{0,\FF}{G}$ element $X$ defined by
    \begin{gather*}
        X = T_{\FF}(C)+c_1T_{\FF}(C_1)+c_2T_{\FF}(C_2)+c_3T_{\FF}(C_3)
    \end{gather*}
    consists of the edges in the cycle $C_4.$ Thus, by Proposition \ref{prop:TC}, there exists $c_4 \in \FF$ such that $X=c_4 T_{\FF}(C_4)$. Furthermore, the length of the cycles $C_i,i=1,2,3,4$ is at most $4$, and by the discussions above, $T_{\FF}(C_i) \in \idt{1,\FF}{G}, i=1,2,3,4.$  Thus,
    \begin{gather*}
        T_{\FF}(C)=-c_1T_{\FF}(C_1)-c_2T_{\FF}(C_2)-c_3T_{\FF}(C_3)+X\\
        = -c_1T_{\FF}(C_1)-c_2T_{\FF}(C_2)-c_3T_{\FF}(C_3)+c_4T_{\FF}(C_4)\in \idt{1,\FF}{G}.
    \end{gather*}
    This proves the claim for all cycles of length at most $5.$
     
    We now assume $\ell \geq 6,$ and that the claim has been established for all cycles of length less than $\ell.$ Without loss of generality, there exists a vertex $v$ in $G$ such that $v \sim v_i,i=1,2,3,4.$ We will consider two cases based on whether $v$ is a vertex of $C$ or not.
    \begin{enumerate}[\text{Case}(1)]
        \item We consider the case $v$ not a vertex of $C$. In this case, as in the last case discussed above, we define 
        \begin{gather*}
        C_1=(v,v_1,v_2), \quad C_2=(v,v_2,v_3), \quad C_3=(v,v_3,v_4), \quad C_4=(v_1,v,v_4,\dots,v_{\ell}).
    \end{gather*}
    Similarly to the above, there exists $c_1,c_2,c_3,c_4 \in \FF$ such that
    \begin{gather*}
        T_{\FF}(C)=c_1T_{\FF}(C_1)+c_2T_{\FF}(C_2)+c_3T_{\FF}(C_3)+c_4T_{\FF}(C_4),
    \end{gather*}
    where the cycles $C_1,C_2,C_3,C_4$ have length at most $\ell -1.$ Thus, by the inductive hypothesis, $T_{\FF}(C)\in \idt{1,\FF}{G}.$

    \item We now consider the case $v=v_i$ for some $i$. We need  to consider three cases.
        \begin{enumerate}
            \item $v=v_5.$ Denote
            \begin{gather*}
                C_1=(v_1,v_2,v_5), \quad C_2=(v_2,v_3,v_5), \quad C_3=(v_3,v_4,v_5), \quad C_4=(v_5,\dots,v_{\ell},v_1),
            \end{gather*}
            and,
            \begin{gather*}
                c_1=-[\{v_1,v_2\}, T_{\FF}(C)] \cdot [\{v_1,v_2\}, T_{\FF}(C_1)], \quad c_2=-[\{v_2,v_3\}, T_{\FF}(C)] \cdot [\{v_2,v_3\}, T_{\FF}(C_2)],\\
                c_3=-[\{v_3,v_4\}, T_{\FF}(C)] \cdot [\{v_3,v_4\}, T_{\FF}(C_3)].
            \end{gather*}
            Also, let
            \begin{gather*}
                X = T_{\FF}(C)+c_1T_{\FF}(C_1)+c_2T_{\FF}(C_2)+c_3T_{\FF}(C_3) \in \kdt{0,\FF}{G}. 
            \end{gather*}
            By construction, the terms $\{v_1,v_2\},\{v_2,v_3\},$ and $\{v_3,v_4\}$ vanish in $X.$ Thus, the support of $X$ consists of all edges of the cycle $C_4,$ possibly except $\{v_1,v_5\}$, possibly along with the edges $\{v_2,v_5\},\{v_3,v_5\},$ and $\{v_4,v_5\}$. However, $X \in \kdt{0,\FF}{G}$ and $X \ne 0,$ every support edge of $X$ must be on a cycle all of whose edges are in the support of $X$. Therefore, the edge $\{v_1,v_5\}$ must be in the support of $X$, and the edges $\{v_2,v_5\},\{v_3,v_5\},$ and $\{v_4,v_5\}$ cannot be in the support of $X.$ Hence, the support of $X$ is precisely the set of edges in the cycle $C_4.$ Thus, there exists $c_4 \in \FF$ such that $X=c_4T_{\FF}(C_4).$ Consequently,
            \begin{gather*}
                T_{\FF}(C)=-c_1T_{\FF}(C_1)-c_2T_{\FF}(C_2)-c_3T_{\FF}(C_3)+c_4T_{\FF}(C_4).
            \end{gather*}
            However, the cycles $C_i,i=1,2,3,4$ have lengths less than $\ell,$ and by inductive hypothesis, $T_{\FF}(C_i)\in \idt{1,\FF}{G}, i=1,2,3,4.$ Consequently, $T_{\FF}(C) \in \idt{1,\FF}{G}$ as well.
            \item The proof in the case $v=v_{\ell}$ is almost identical to the case $v=v_5,$ and will be skipped.
            \item We now consider the case $v=v_i$ for some $6 \leq i \leq \ell-1.$ Denote
            \begin{gather*}
                C_1=(v_1,v_2,v_i), \quad C_2=(v_2,v_3,v_i), \quad C_3=(v_3,v_4,v_i),\\
                C_4=(v_1,v_i,\dots,v_{\ell}), \quad C_5=(v_4,\dots,v_i).
            \end{gather*}
        \end{enumerate}
        By Proposition \ref{prop:supp_red}, there exist $c_1,c_2,c_3 \in \FF$ such that the $\kdt{0,\FF}{G}$ element $X$ defined by:
        \begin{gather*}
            X = T_{\FF}(C)+c_1T_{\FF}(C_1)+c_2T_{\FF}(C_2)+c_3T_{\FF}(C_3)
        \end{gather*}
        has support the edges of the cycles $C_4$ and $C_5$. By Lemma \ref{lemma:two-cycle-join-vertex}, there exists $c_4,c_5 \in \FF$ such that:
        \begin{gather*}
            X=c_4T_{\FF}(C_4)+c_5T_{\FF}(C_5).
        \end{gather*}
        Thus,
        \begin{gather*}
            T_{\FF}(C)=\sum_{i=1}^3(-c_i)T_{\FF}(C_i)+\sum_{i=4}^5c_iT_{\FF}(C_i).
        \end{gather*}
        However, the cycles $C_i,i=1,\dots,5$ have lengths strictly less than $\ell,$ and by the inductive hypothesis, each $T_{\FF}(C_i)\in \idt{1,\FF}{G},$ and consequently, $T_{\FF}(C) \in \idt{1,\FF}{G}.$
    \end{enumerate}
    By induction, we conclude that the claim is true for every $\ell \geq 3,$ and the theorem follows.
\end{proof}

\begin{theorem}\label{thm:4-vertex-common-neighbor-srg}
If $G$ is a strongly regular graph with the property that any induced cycle of length $4$ or $5$ has four vertices that have a common neighbor in $G$ and $\FF$ is a field, then the clique complex of $G$ satisfies $\kdt{0,\FF}{G}=\idt{1,\FF}{G}.$ Equivalently, $\hlcl{G}{\FF}=0.$
\end{theorem}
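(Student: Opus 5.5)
The plan is to reduce everything to the case of cycles of length at most $5$, using the fact that a strongly regular graph has diameter at most $2$. As in the proof of Theorem \ref{thm:4-vertex-common-neighbor}, by Corollary \ref{cor:TCinduced} it suffices to show that $T_{\FF}(C)\in\idt{1,\FF}{G}$ for every cycle $C=(v_1,\dots,v_\ell)$ of $G$. I will argue by induction on $\ell$.

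First I dispose of degenerate cases. If $G$ is disconnected, then (being strongly regular) it is a disjoint union of cliques. If $G$ is complete, it has no induced cycles of length $\geq 4$. In both situations every cycle splits along chords into triangles by Lemma \ref{lemma:cycle-split-over-chord}, and the claim is immediate. So I may assume $G$ is connected and not complete, hence of diameter $2$.

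For $\ell=3$ there is nothing to prove. For $\ell\in\{4,5\}$, if $C$ has a chord, Lemma \ref{lemma:cycle-split-over-chord} writes $T_{\FF}(C)$ as a $\pm1$ combination of $T_{\FF}$ of strictly shorter cycles. If $C$ is induced, the hypothesis provides four vertices of $C$ with a common neighbor. For $\ell=4$ these are all the vertices of $C$. For $\ell=5$, any four vertices of a $5$-cycle are cyclically consecutive, so after relabeling they are $v_1,v_2,v_3,v_4$. The arguments for $\ell=4$ and $\ell=5$ in the proof of Theorem \ref{thm:4-vertex-common-neighbor} then apply verbatim, using Lemma \ref{prop:wheel4}, Lemma \ref{prop:supp_red} and Proposition \ref{prop:TC}, in both subcases: common neighbor equal to $v_5$, or lying off the cycle. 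This is the only place where the hypothesis is used. It gives $T_{\FF}(C)\in\idt{1,\FF}{G}$ for all cycles of length at most $5$.

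Now let $\ell\geq 6$ and assume the claim for all shorter cycles. If $C$ has a chord, I apply Lemma \ref{lemma:cycle-split-over-chord} and induction. If $C$ is induced, then $v_1$ and $v_4$ are nonadjacent, so by diameter $2$ they have a common neighbor $w$. I must check that $w\notin C$. On an induced cycle the only neighbors of $v_1$ are $v_2$ and $v_\ell$, and the only neighbors of $v_4$ are $v_3$ and $v_5$. Since $\ell\geq 6$, the sets $\{v_2,v_\ell\}$ and $\{v_3,v_5\}$ are disjoint, and $v_2\not\sim v_4$, $v_\ell\not\sim v_4$. Hence $w$ lies off the cycle, and $P=(v_1,w,v_4)$ is a path meeting $C$ exactly in $\{v_1,v_4\}$.

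Lemma \ref{lemma:cycle-split-over-path} then gives
\[
T_{\FF}(C)=c_1T_{\FF}(v_1,v_2,v_3,v_4,w)+c_2T_{\FF}(v_1,w,v_4,v_5,\dots,v_\ell)
\]
with $c_1,c_2\in\{1,-1\}$. The two cycles on the right have lengths $5$ and $\ell-1$, both smaller than $\ell$ since $\ell\geq 6$. By the induction hypothesis both lie in $\idt{1,\FF}{G}$, completing the induction.

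The main point requiring care is the verification that $w$ is not on $C$, which is what makes the path lemma applicable. It relies on choosing vertices at cyclic distance $3$ and on $\ell\geq 6$. For $\ell=5$ this choice would fail, which is exactly why the hypothesis must handle induced $5$-cycles directly.
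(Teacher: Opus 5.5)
Your proposal is correct and follows essentially the same route as the paper: induction on $\ell$, with $\ell\le 5$ handled exactly as in Theorem \ref{thm:4-vertex-common-neighbor}, chords removed by Lemma \ref{lemma:cycle-split-over-chord}, and an induced cycle of length $\ell\geq 6$ split via Lemma \ref{lemma:cycle-split-over-path} along a common neighbor of $v_1$ and $v_4$ into cycles of lengths $5$ and $\ell-1$. The differences are cosmetic: the paper gets $\mu>0$ directly from the induced path $v_1v_2v_3$ instead of first discarding the disconnected and complete cases, and it only asserts that the common neighbor lies off $C$, which you verify explicitly.
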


\begin{proof}
 In the following, $T$ denotes $T_{\FF}$. It suffices to prove that if $C$ is a cycle in $G$, then $T_{\FF}(C) \in \idt{1,\FF}{G}.$ We prove this by induction on the length of $C.$ Let $C=(v_1,\dots,v_{\ell}).$ Using an identical argument as in the proof of Theorem \ref{thm:4-vertex-common-neighbor}, we can show that the claim is true for $\ell=3,4,5.$

We now assume $\ell \geq 6,$ and that the claim has been established for all cycles of length less than $\ell.$ If $C$ is not induced, we may assume without loss of generality that $v_1 \sim v_i$ for some $3 \leq i \leq \ell-1.$ Let $C_1=(v_1,\dots,v_i)$ and $C_2=(v_1,v_i,\dots,v_{\ell}).$ By Lemma \ref{lemma:cycle-split-over-chord}, there are constants $c_1,c_2$ such that $T(C)=c_1T(C_1)+c_2T(C_2).$ Both the cycles $C_1$ and $C_2$ have lengths strictly less than $\ell,$ thus, by the induction hypothesis, $T(C_1) \in \idt{1,\FF}{G}$ and $T(C_2) \in \idt{1,\FF}{G}.$ Therefore, $T(C) \in \idt{1, \FF}{G}$

Consider now the case that $C$ is induced. In particular, $v_1 \sim v_2,\ v_2 \sim v_3,\ v_1 \not \sim v_3.$ Therefore, the $\mu$ parameter of $G$ is positive. Since $v_1 \not \sim v_4$, there is some vertex $x$ in $G$, necessarily not on $C$, such that $v_1 \sim x$ and $v_4 \sim x.$ Denote $C_1=(v_1,v_2,v_3,v_4,x)$ and $C_2=(v_1,x,v_4,\dots,v_{\ell}).$ By Lemma \ref{lemma:cycle-split-over-path}, there are constants $c_1,c_2$ such that $T(C)=c_1T(C_1)+c_2T(C_2).$ Both the cycles $C_1$ and $C_2$ have lengths strictly less than $\ell,$ thus, by the induction hypothesis, $T(C_1) \in \idt{1,\FF}{G}$ and $T(C_2) \in \idt{1,\FF}{G}.$ Therefore, $T(C) \in \idt{1, \FF}{G}$ in this case also, Thus, $T(C) \in \idt{1,\FF}{G}$ in all cases, and the theorem is proved.
\end{proof}

\section{Latin Square Graphs} \label{sec:latin-square-graphs}

In this section, we explicitly compute all homologies of the clique complexes of the strongly regular graphs associated with a Latin square of order at least $5$. We will show that all homologies except $H_2$ vanish over all fields, and $H_2$ has connections with the number $2 \times 2$ Latin subsquares or intercalates. 

\subsection{Latin Squares and Latin Square Graphs}
For a positive integer $n$, a Latin square $M$ of order $n$ is an $n \times n$ matrix with symbols $\{1,\dots,n\}$ such that each symbol in $\{1,\dots,n\}$ appears exactly once in each row and in each column. %Given a Latin square $M$, we define the symbol function $s_M: [n] \times [n] \to [n]$ so that $s_M(i,j)$ denotes the symbol in the $(i,j)$ position of the Latin square $M$, for $(i,j) \in [n] \times [n].$ 

We define the Latin square graph $L(M)$ to be the graph with vertex set $[n] \times [n]$ (corresponding to the matrix coordinate positions of $M$), where for $(i,j), (x,y) \in [n] \times [n], (i,j) \not = (x,y)$,
\begin{equation*}
(i,j) \sim (x,y) \iff i=x \text{ or } j = y \text{ or } M(i,j)=M(x,y).  
\end{equation*}
We partition the edges of $G:=L(M)$ into three sets.
\begin{defn}
    Let $G$ be the Latin square graph of the $n \times n$ Latin square $M$, and $(i,j)$, $(x,y)$ are two adjacent vertices of $G,$ and $e$ is the edge containing the two vertices. We say:
    \begin{gather*}
        e \text{ is horizontal if } i = x; \text{ vertical if } j = y, \text{ and, } \text{ diagonal if } M(i,j) = M(x,y).
    \end{gather*}
\end{defn}
%For $v \in [n] \times [n],$ we will also denote by $M[v]$ the symbol at position $v$ in $M,$ following the usual matrix notation.

\begin{defn}
A cycle $C$ of $G$ is called rectangular if it contains no diagonal edges.

%If $C$ is a cycle in $G$ with no diagonal edges, then $C$ will be said to be a rectangular cycle.
\end{defn}
Note that if $n \geq 2$, then $G$ has at least one edge of each type. If $M$ is a Latin square of order $n \geq 4$, then the Latin square graph $L(M)$ is strongly regular with parameters $(n^2, 3(n-1), n, 6)$.

\subsection{The First Homology of Clique Complex of Latin Square Graphs}
\label{subsec:the-h1-of-clique-complex-of-latin-square-grpahs}

In this section, we will show that if $M$ is a Latin square of order $n \geq 4$ and $G$ the strongly regular graph associated to $M$, then $\hlcl{G}{\FF}$ vanishes over every field. We prove this by showing that if $C$ is any cycle in $G$, then $T_{\FF}(C) \in \idt{1,\FF}{G}$. In that regard, we first show that this is true if $C$ has length four and contains only non-diagonal edges in Proposition \ref{prop:rectangular-4-cycle}. In Proposition \ref{prop:rectangular-cycles}, we generalize this to all cycles with only non-diagonal edges by showing that such cycles may be inductively {\em homotoped} to a cycle  of the same kind with a smaller number of edges. Finally, we complete the proof that $\hlcl{G}{\FF}=0$ in Theorem \ref{vanishing-latin-h1} by showing that cycles with diagonal edges may be {\em homotoped} to cycles with fewer diagonal edges.

\subsubsection{The Rectangular Cycles}

For the remainder of the subsection, we will assume $M$ is a Latin square of order $n \geq 4$ and $G=L(M)$ the strongly regular graph associated to $M.$

\begin{prop} \label{prop:rectangular-4-cycle}
Let $C$ be rectangular $4$-cycle in $G$. Then $T_{\Z}(C) \in \idt{1,\Z}{G}$. Moreover, there is $s\in \mathbb{N}$ and there are triangles $C_1,\ldots,C_s$ in $G$ and coefficients $c_1,\ldots,c_s\in \{-1,1\}$ such that
\begin{gather*}
    T_{\Z}(C) = \sum_{i=1}^s c_i\delta_{1,\Z}^T(C_i).
\end{gather*}
\end{prop}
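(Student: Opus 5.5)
The plan is to reduce every rectangular $4$-cycle to the case of a genuine ``rectangle'' in $M$, and to triangulate that explicitly using auxiliary ``hub'' vertices. Throughout I keep track of signs. Every tool used, namely Lemma \ref{lemma:cycle-split-over-chord}, Lemma \ref{lemma:cycle-split-over-path}, Lemma \ref{prop:wheel4} and Proposition \ref{prop:TC} (applied to $\pm1$-vectors), produces coefficients in $\{1,-1\}$. So at the end $T_{\Z}(C)$ is written as a $\pm1$-combination of $\delta_{1,\Z}^T$ of triangles, and a single combination gives both assertions.

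\emph{Step 1 (shape of $C$).} Let $C$ be a $4$-cycle with only horizontal and vertical edges. Suppose two consecutive edges are horizontal. Then three vertices of $C$ lie in one row $i$. The fourth vertex is joined to two of them, which lie in distinct columns, by non-diagonal edges, so it also lies in row $i$. Hence $C$ lies in a clique, it has a chord, and Lemma \ref{lemma:cycle-split-over-chord} writes $T_\Z(C)$ as a $\pm1$-combination of two triangle boundaries. The case of two consecutive vertical edges is symmetric. The remaining case is $C=((i,j),(i,y),(x,y),(x,j))$ with $i\ne x$ and $j\ne y$. If $M(i,j)=M(x,y)$ or $M(i,y)=M(x,j)$, then $C$ again has a (diagonal) chord and we conclude as before. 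So assume the four symbols at the corners are pairwise distinct.

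\emph{Step 2 (first hub).} Let $w_1=(i,z)$ be the unique cell of row $i$ with $M(i,z)=M(x,y)$. Then $z\notin\{j,y\}$, and $w_1$ is adjacent to $(i,j)$, $(i,y)$ and $(x,y)$. Adding the two triangles $\{(i,j),(i,y),w_1\}$ and $\{(i,y),(x,y),w_1\}$ with suitable signs reroutes the path $(i,j),(i,y),(x,y)$ through $w_1$. Comparing supports and applying Proposition \ref{prop:TC}, we get $T(C)=\pm\delta^T(\cdot)\pm\delta^T(\cdot)\pm T(C')$ with $C'=((i,j),w_1,(x,y),(x,j))$. Symmetrically, let $w_2=(x,z')$ with $M(x,z')=M(i,j)$. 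Two more triangles replace $C'$ by $C''=((i,j),w_1,(x,y),w_2)$.

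\emph{Step 3 (closing the last $4$-cycle; the main obstacle).} It remains to show that $T(C'')$ is a $\pm1$-sum of triangle boundaries. If $z=z'$, then $w_1\sim w_2$ vertically, which is a chord, and we are done by Lemma \ref{lemma:cycle-split-over-chord}. If $z\neq z'$, I would look for a common neighbour of the four vertices of $C''$ so as to apply Lemma \ref{prop:wheel4}. Natural candidates are $(x,z)$, which is adjacent to $w_1$, $(x,y)$ and $w_2$, and $(i,z')$, which is adjacent to $(i,j)$, $w_1$ and $w_2$. Each misses exactly one vertex of $C''$.

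Failing a common neighbour, I would instead split $C''$ along one of these candidates using Lemma \ref{lemma:cycle-split-over-path}. This produces triangles together with new $4$-cycles made of horizontal and vertical edges, and the induction would run on a suitable measure: the number of diagonal edges, or the column-distance pattern. If necessary I would rechoose the hubs using the freedom $n\ge4$ so that the resulting cycles fall back into Step 1. Getting this last reduction to terminate cleanly, with all coefficients staying $\pm1$, is the step I expect to require the most care.
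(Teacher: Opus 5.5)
Your Steps 1 and 2 are correct, including the chord cases and the $\pm1$ bookkeeping via Proposition \ref{prop:TC}. The gap is Step 3, which is where the entire content of the proposition lies: it is not carried out, and with your choice of hubs the route you propose provably fails.

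If $z\neq z'$, the cycle $C''=((i,j),(i,z),(x,y),(x,z'))$ is induced and has no common neighbour at all. A common neighbour in row $i$ (resp.\ $x$) off the cycle is forced to be $(i,y)$ (resp.\ $(x,j)$), and that cell is not adjacent to $(x,z')$ (resp.\ $(i,z)$). A common neighbour $u$ outside rows $i,x$ must share its column or its symbol with each of the four cells. The columns $j,z,y,z'$ are distinct, and the cells carry only the two symbols $\alpha=M(i,j)=M(x,z')$ and $\beta=M(x,y)=M(i,z)$. So at most one cell shares $u$'s column and at most two share its symbol, which is impossible.

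Hence Lemma \ref{prop:wheel4} never applies, and your fallback is circular. For example, reroute through $(x,z)$, remove the remaining diagonal edge via $(i,z')$, and split off a triangle. This returns the rectangular $4$-cycle $((i,z),(x,z),(x,z'),(i,z'))$, an instance of the very statement being proved, and you name no quantity that decreases. This proposition is the base case on which Proposition \ref{prop:rectangular-cycles} and Theorem \ref{vanishing-latin-h1} rest, so an explicit filling cannot be avoided.

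The missing idea is to choose the two hubs with a \emph{common} symbol, so that they are adjacent. With $\alpha=M(i,j)$:
\begin{itemize}
\item let $w_1=(r,y)$ be the cell of column $y$ with symbol $\alpha$;
\item let $w_2=(x,z')$ be the cell of row $x$ with symbol $\alpha$.
\end{itemize}
The Latin property together with the no-chord assumption $M(x,y)\neq\alpha$ gives $r\notin\{i,x\}$ and $z'\notin\{j,y\}$. Then $w_1$ is adjacent to $(i,j),(i,y),(x,y)$, and $w_2$ is adjacent to $(x,y),(x,j),(i,j)$. So your Step 2 rerouting goes through verbatim and produces $((i,j),w_1,(x,y),w_2)$. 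This cycle now has the symbol chord $w_1w_2$ and is finished by the triangles $\{(i,j),w_1,w_2\}$ and $\{w_1,(x,y),w_2\}$.

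These six triangles are exactly the paper's filling. There, $b''$ and $b'$ both carry $M[b]$ and lie on the column and row through $d$, the corner opposite $b$, with the central symbol triangle $\{b,b',b''\}$. The paper verifies the identity for one vertex order and transfers it to an arbitrary order with diagonal $\pm1$ matrices. Your support-comparison argument via Proposition \ref{prop:TC} would give the $\pm1$ coefficients directly, without that step.
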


\begin{proof}
By an application of Lemma \ref{lemma:cycle-split-over-chord}, the proposition is true if $C$ is not induced, so we may assume that $C$ is induced. In particular, $C$ does not have three vertices with the same $x$ coordinate or the same $y$ coordinate in the matrix coordinate system, nor does it have two vertices with the same symbol. Thus, the four vertex coordinates form a rectangle in the plane. Let the vertices of the cycle be $a=(x,y), b=(x,z), c=(t,z),$ and $d=(t,y),$ enumerated clockwise, starting for the top-left corner, where the ordered pairs refer to matrix coordinates.

We consider two other vertices, distinct from $a,b,c,d$, described as follows:
\begin{enumerate}
    \item vertex $b'$ on row $t$ and not on columns $y$ or $z$ such that: $M[b']=M[b],$ and,
    \item vertex $b''$ on column $y$ and not on rows $x$ or $t$ such that: $M[b'']=M[b].$
\end{enumerate}
Because $C$ is an induced cycle, $b'\neq d$ and $b''\neq d$.

We consider the induced subgraph on these $6$ vertices. First, assume the  ordering on these vertices: $a < b < c < d < b' < b''$. 

It is straightforward to verify the following relation:
\begin{gather} \label{eq:T-z(a,b,c,d)}
    T_{\Z}(a,b,c,d)=\{a,b\} + \{b,c\} + \{c,d\} - \{a,d\}\\
    =\delta_{1,\Z}^T(\{a,b,b''\}-\{a,d,b''\} + \{b,c,b'\} + \{b,b',b''\} + \{c,d,b'\} - \{d,b',b''\}) \in \im \delta_{1,\Z}^T.
\end{gather}
If $\delta_{1,\Z}'$ is the coboundary matrix under a different ordering of the vertices in $G$, then there are diagonal and square integer matrices $S_1,S_2$ (of appropriate sizes) whose diagonal entries are $\pm 1$ such that (see \cite[Appendix A]{cioaba-guo-ji-mim}):
\begin{gather} \label{eq:eq-2}
    \delta_{1,\Z}' = S_1\delta_{1,\Z}S_2.
    % \text{ and } S_1^TS_1= I, S_2^TS_2=I.
\end{gather}

If $X=\clc{G}$, then let $v = T_{\Z}(a,b,c,d) \in \Z^{X_1}$ and $w \in \Z^{X_2}$ be the vectors in equation \eqref{eq:T-z(a,b,c,d)} above, such that $\delta_{1,\Z}^T w = v$. We have that
\begin{gather}
    \delta_{1,\Z}^{'T}(S_1w) = (\delta_{1,\Z}^{'T}S_1)w=(S_2\delta_{1,\Z}^T)w=S_2(\delta_{1,\Z}^Tw) = S_2v.
\end{gather}
Because $S_1$ is a diagonal matrix with diagonal entries $\pm 1$, $S_1w$ has the same support as $w,$ with $(S_1w)_i = \pm w_i$ for each coordinate $i$. Hence, there are $c_1,\dots,c_6 \in \{1, -1\}$ such that:
\begin{gather*}
    S_2v=\delta_{1,\Z}^{'T}(S_1w)\\
    =\delta_{1,\Z}'^T(c_1\{a,b,b''\}+c_2\{a,d,b''\} +c_3 \{b,c,b'\} + c_4\{b,b',b''\} + c_5\{c,d,b'\} +c_6 \{d,b',b''\}).
\end{gather*}
In particular, $S_2v \in \im(\delta_{1,\Z}'^T).$ If $\delta_0'$ is the new coboundary operator under the ordering, then $S_2v \in \ker \delta_0^{'T}.$ Moreover, the support of $S_2v$ is the same as the support of $v$, namely the set of the edges of the cycle $(a,b,c,d),$ and all nonzero coefficients (entries) in $S_2v$ are $\pm 1$. This implies that $S_2v = \pm T_{\Z}(a,b,c,d)$, where the last term has been computed with respect to the new ordering. Thus, $T_{\Z}(a,b,c,d)=\pm S_2v$. 

If $T_{\Z}(a,b,c,d)=S_2v,$ then $T_{\Z}(a,b,c,d)=S_2v\in \im(\delta_{1,\Z}'^T).$ If $T(a,b,c,d)=-S_2v,$ then
\begin{gather*}
T(a,b,c,d)=\delta_{1,\Z}'^T(-c_1\{a,b,b''\}-c_2\{a,d,b''\}-c_3 \{b,c,b'\} -c_4\{b,b',b''\} - c_5\{c,d,b'\} - c_6 \{d,b',b''\}),
\end{gather*}
which implies that $T(a,b,c,d) \in \im(\delta_{1,\Z}'^T).$ In each case, $T_{\Z}(C) \in \im(\delta_{1,\Z}'^T)$ and the proof follows.
\end{proof}

\begin{corollary} \label{corollary:rectangular-4-cycle-field}
Let $\FF$ be a field. If $C$ is a rectangular $4$-cycle in $G$, then $T_{\FF}(C) \in \idt{1,\FF}{G}$. Moreover, there is $s\in \mathbb{N}$, triangles $C_1,\ldots,C_s$ in $G$, and $c_1,\ldots,c_s \in \{-1,1\}\subset \FF$ such that
\begin{gather*}
    T_{\FF}(C) = \sum_{i=1}^s c_i\delta_{1,\FF}^T(C_i).
\end{gather*}
\end{corollary}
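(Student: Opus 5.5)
The plan is to deduce the corollary from Proposition \ref{prop:rectangular-4-cycle} by reducing integer coefficients to $\FF$. Let $\phi:\Z\to\FF$ be the unique ring homomorphism, sending $1\mapsto 1$ and $-1\mapsto -1$. Entrywise, the boundary matrices satisfy $\delta_{1,\FF}^T=\phi(\delta_{1,\Z}^T)$ and $\delta_{0,\FF}^T=\phi(\delta_{0,\Z}^T)$, because their entries are the signs $[F:K]\in\{0,\pm1\}$. These signs are defined purely combinatorially from the fixed vertex ordering. Consequently, for any integer vector $w\in\Z^{X_2}$ we have $\phi(\delta_{1,\Z}^T w)=\delta_{1,\FF}^T\phi(w)$, since $\phi$ commutes with sums and products.

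Next we check that $T$ commutes with reduction, i.e. $T_{\FF}(C)=\phi(T_{\Z}(C))$ for every cycle $C$. The coefficients $c_i$ of Definition \ref{def:TC} are defined by the recursion $c_1=1$, $c_i=-c_{i-1}[e_{i-1}:v_i][e_i:v_i]$. This recursion involves only the integers $\pm1$ and ring multiplication, so induction on $i$ shows that the $\FF$-coefficients are the images under $\phi$ of the $\Z$-coefficients.

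Now apply Proposition \ref{prop:rectangular-4-cycle}. It gives triangles $C_1,\dots,C_s$ and signs $c_i\in\{\pm1\}$ with $T_{\Z}(C)=\sum_i c_i\delta_{1,\Z}^T(C_i)$, where $C_i$ denotes the corresponding basis vector of $\Z^{X_2}$. The proposition's proof allowed a reordering of vertices, handled via the matrices $S_1,S_2$, but its conclusion is stated for the paper's fixed ordering, and we work with that ordering throughout. Applying $\phi$ to both sides yields
\begin{gather*}
T_{\FF}(C)=\phi(T_{\Z}(C))=\sum_{i=1}^s \phi(c_i)\,\delta_{1,\FF}^T(C_i),
\end{gather*}
with $\phi(c_i)\in\{-1,1\}\subset\FF$. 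This gives both claims of the corollary.

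The only point needing care is in characteristic $2$, where $1=-1$ and the images of the $c_i$ may coincide. This is harmless, since the statement only requires $\phi(c_i)\in\{-1,1\}$, and the identity holds after reduction regardless of any cancellation. The main (mild) obstacle is making explicit that the sign conventions for $\delta$ and $T$ are characteristic-free integer data, so that reduction mod the characteristic commutes with all the constructions.
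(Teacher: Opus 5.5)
Your proposal is correct and is essentially the paper's argument: you push the integral identity of Proposition~\ref{prop:rectangular-4-cycle} through the canonical map $\Z\to\FF$, using that the entries of $\delta_1^T$ and the coefficients of $T(C)$ are $0,\pm1$ and so commute with reduction. The only difference is cosmetic. The paper treats characteristic $0$ (via $\Q\subseteq\FF$) and characteristic $p$ (via reduction mod $p$ and $\FF_p\subseteq\FF$) separately, whereas your single homomorphism $\phi$ handles both at once and also proves the compatibility $T_{\FF}(C)=\phi(T_{\Z}(C))$, which the paper only asserts.
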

\begin{proof}
From Proposition \ref{prop:rectangular-4-cycle}, there is $s\in \mathbb{N}$, triangles  $C_1,\ldots,C_s$ in $G$ and $c_1,\ldots,c_s \in \{-1,1\} \subset \Z$ such that
\begin{gather*}
    T_{\Z}(C) = \sum_{i=1}^s c_i\delta_{1,\Z}^T(C_i).
\end{gather*}
If $\fchar{\FF}=0,$ then $\Q$ embeds in $\FF$ as the prime field of $\FF,$ and the above relation holds over $\FF$ as well. On the other hand, if $\fchar{\FF} > 0,$ let $p = \fchar{\FF}.$ Then, taking $\pmod p$ on both sides of the above equation, we have:
\begin{gather}
    T_{\FF_p}(C) = \sum_{i=1}^s c_i\delta_{1,\FF_p}^T(C_i).
\end{gather}
Note that, here we have used the observation $T_{\Z}(C) \pmod p = T_{\FF_p}(C).$ Since $\FF_p \subset \FF,$ the above equation also holds in $\FF,$ and so,
\begin{gather*}
     T_{\FF}(C) = \sum_{i=1}^s c_i\delta_{1,\FF}^T(C_i).
\end{gather*}
\end{proof}

\begin{prop} \label{prop:rectangular-cycles}
    Let $C$ be a rectangular cycle in $G$ and $\FF$ a field. Then, $T_{\FF}(C) \in \idt{1,\FF}{G}.$ 
\end{prop}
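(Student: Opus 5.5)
Induction on the length $\ell$ of the rectangular cycle $C=(v_1,\dots,v_\ell)$. Every edge of $C$ is horizontal or vertical, so $C$ is a closed walk in the rook's graph on $[n]\times[n]$. The idea is to shorten $C$ either by splitting along a chord or by replacing a corner by the opposite corner of a rectangle. In the latter case the rectangular $4$-cycle that is added is already handled by Corollary \ref{corollary:rectangular-4-cycle-field}.

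For $\ell=3$ the statement is trivial. For $\ell=4$ it is exactly Corollary \ref{corollary:rectangular-4-cycle-field}. Now let $\ell\geq 5$ and assume the claim for all shorter rectangular cycles.

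\emph{Chord case.} Suppose $v_i\sim v_j$ for two non-consecutive vertices of $C$, via a horizontal or vertical chord. Lemma \ref{lemma:cycle-split-over-chord} writes $T(C)=c_1T(C_1)+c_2T(C_2)$, where $C_1,C_2$ are shorter and still rectangular, so induction applies. This covers in particular the situation where three consecutive vertices share a row or a column, since then the first and third are adjacent by a horizontal or vertical edge.

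\emph{Turning case.} Otherwise consecutive edges alternate between horizontal and vertical, so every vertex is a corner. Take three consecutive vertices $v_1=(x,y)$, $v_2=(x,z)$, $v_3=(t,z)$ and set $w=(t,y)$.
\begin{enumerate}[(i)]
\item If $w\notin C$, then $(v_1,v_2,v_3,w)$ is a rectangular $4$-cycle. Replacing the path $v_1v_2v_3$ by $v_1wv_3$ gives a rectangular cycle $C'$ of the same length. Lemma \ref{lemma:cycle-split-over-path}, applied to the path $(v_1,w,v_3)$, gives $T(C)=c_1T(v_1,v_2,v_3,w)+c_2T(C')$. This step does not shorten the cycle, so I would measure progress by a secondary potential.
\item If $w\in C$, then $w$ is adjacent to both $v_1$ and $v_3$, so $v_1w$ or $wv_3$ is a chord unless $w$ is a neighbour of $v_1$ or $v_3$ on $C$. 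A chord puts us back in the chord case. The remaining configuration is handled by splitting $C$ into two shorter rectangular cycles that share the path $v_1 w$ or $w v_3$.
\end{enumerate}

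For the potential in (i), I would choose the corner $v_2$ to extremize something, for example the area enclosed in the grid or the sum of the distances from a fixed vertex. An alternative is to work directly with rows: pick a row $x$ visited by $C$ and push all horizontal edges in row $x$ into an adjacent visited row. The push makes consecutive vertices collinear or coincident. Collinear vertices give chords, and coincident vertices make $C'$ a closed walk that splits into two shorter cycles by Lemma \ref{lemma:two-cycle-join-vertex}. Either way the length drops.

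\textbf{Main obstacle.} The hardest part is the bookkeeping in (i). Repeated corner swaps must terminate in a shorter cycle or a closed walk that decomposes. The intermediate objects may be closed walks rather than cycles, so I would extend $T$ to closed walks, additively over the edge sequence, so that the decomposition is automatic. Formally the cleanest choice is induction on $(\ell,\text{number of distinct rows used})$ in lexicographic order. Every corner swap whose pushed row already appears on $C$ either strictly reduces the number of distinct rows or creates a repeated vertex, which shortens the walk. This is the step I expect to need the most care.
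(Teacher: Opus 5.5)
\textbf{There is a gap in step (i): termination of the corner swaps is not established, and the potential you propose does not work. A single swap already suffices, as shown below.}

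Your strategy matches the paper's. You split along a chord when possible. Otherwise you replace the corner $v_2$ by the opposite corner $w$ of the rectangle $v_1v_2v_3w$, paying with a rectangular $4$-cycle covered by Corollary \ref{corollary:rectangular-4-cycle-field}. You are right to allow only horizontal or vertical chords in the chord case. A diagonal chord would split $C$ into non-rectangular cycles, which the induction hypothesis does not cover; the paper states this case for an arbitrary chord, though only non-diagonal chords are needed.

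The problem is that step (i) is left pending a ``secondary potential''. The lexicographic potential you suggest fails. A swap replaces $v_2=(x,z)$ by $w=(t,y)$, but rows $x$ and $t$ stay occupied by $v_1$ and $v_3$, so the number of distinct rows does not change. Since $w\notin C$, no vertex is repeated either. The claimed dichotomy ``fewer rows or a repeated vertex'' therefore does not hold for a swap.

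The missing observation is this. In the turning case the edges alternate, so $v_3v_4$ is horizontal and $v_4=(t,u)$ lies in row $t$, as does $w$. Hence $C'=(v_1,w,v_3,v_4,\dots,v_\ell)$ has three consecutive vertices $w,v_3,v_4$ in row $t$. So $\{w,v_4\}$ is a horizontal chord of $C'$; note $w\neq v_4$ because $w\notin C$.

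Your chord case uses only the induction hypothesis for shorter cycles. Applied to $C'$, it writes $T(C')$ as a $\pm 1$-combination of $T(w,v_3,v_4)$ and $T(v_1,w,v_4,\dots,v_\ell)$, and the latter is a rectangular cycle of length $\ell-1$. Combined with $T(C)=c_1T(v_1,v_2,v_3,w)+c_2T(C')$, this closes the induction. It is exactly how the paper handles its case $y_4>y_2$, and the same swap would also dispose of its other subcases.

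Likewise, your case (ii) never occurs once horizontal and vertical chords are excluded and $\ell\ge 5$. If $w=v_4$, then $v_1w$ is a vertical chord. If $w=v_\ell$, then $wv_3$ is a horizontal chord. Otherwise both are chords. So you need neither a potential function nor an extension of $T$ to closed walks.
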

\begin{proof}
    In this proof, we will denote $\delta_{0,\FF}, \delta_{1,\FF}, T_{\FF}$ by $\delta_0, \delta_1, T,$ respectively. 
    
    The proof is by strong induction on the number $s$ of edges in $C$. The base case is $s=3,$ when $C$ is a triangle in $G$, and $T(C)=\pm \delta_{1}^T(C)\in \idt{1,\FF}{G}$. If $s=4$, by Proposition \ref{corollary:rectangular-4-cycle-field}, $T(C) \in \idt{1}{G}.$ Assume now that the proposition holds for all rectangular cycles of length strictly less than $s,$ and fix cycle $C$ of length $s,$ where $s \geq 5.$ We will consider several cases and show that in each case, the inductive hypothesis applies, and $T(C) \in \idt{1,\FF}{G}.$

First, consider the case when $C=(v_1,\dots,v_s)$ has two vertices $v_i, v_j, i < j,$ that are adjacent in $G$, but the edge $\{v_i,v_j\}$ is not in $C$. 

Denote $C_1=(v_i,v_{i+1},\dots,v_j)$ and $C_2=(v_i,v_j,v_{j+1},\dots,v_s,v_1,\dots,v_i).$ By Lemma \ref{lemma:cycle-split-over-chord}, there are constants $c_1,c_2 \in \FF^*$ such that:
\begin{gather*}
        T(C) = c_1 T(C_1) + c_2 T(C_2),
\end{gather*}
where both $C_1$ and $C_2$ have only horizontal or vertical edges and have lengths strictly less than $s$. By the inductive hypothesis, $c_1 T(C_1), c_2 T(C_2) \in \idt{1}{G},$ and so, $T(C) \in \idt{1}{G}$ in this case.

In particular, if $C$ has three vertices (not necessarily consecutive) with the same first coordinate or the same second coordinate, the inductive hypothesis applies and we have $T(C) \in \idt{1}{G}.$ 

Assume now that $C$ does not contain three vertices (not necessarily consecutive) with the same first coordinate or the same second coordinate. In particular, $C$ does not contain two consecutive edges that are both horizontal or both vertical. Since $C$ does not contain any diagonal edge, the edges in $C$ alternate between vertical and horizontal types.

Consider the first four consecutive vertices of the cycle: 
\begin{equation*}
    v_1=(x_1,y_1),\, v_2=(x_2,y_2),\, v_3=(x_3,y_3),\, v_4=(x_4,y_4),
\end{equation*}
where the coordinate values refer to positions in the matrix $M$. 

Without loss of generality, assume that the edges $\{v_1,v_2\}, \{v_3,v_4\}$ are horizontal, $\{v_2,v_3\}$ is vertical, and $y_1<y_2$. Therefore, $y_2 = y_3$ and $y_4 \not = y_2.$
    \begin{figure}[h]
        \centering
        \resizebox{\textwidth}{!}{
            % Rectangular cases
% \includegraphics[width=0.5\linewidth]{}
\begin{tikzpicture}
    \coordinate (v_1) at (0,0);
    \coordinate (v_2) at (2,0);
    \coordinate (v_3) at (2,1);
    \coordinate (v_4) at (0,1);
    \coordinate (v_5) at (0,2);
    \coordinate (v_s) at (0,-2);
    \fill (v_1) circle(3pt) node[anchor=east]{$v_1$};
    \fill (v_2) circle(3pt) node[anchor=west]{$v_2$};
    \fill (v_3) circle(3pt) node[anchor=west]{$v_3$};
    \fill (v_4) circle(3pt) node[anchor=east]{$v_4$};
    \fill (v_5) circle(3pt) node[anchor=south]{$v_5$};
    \fill (v_s) circle(3pt) node[anchor=north]{$v_s$};
    \draw (v_s) edge[dashed] (v_1);
    \draw (v_4) edge[dashed] (v_5);
    \draw (v_1) -- (v_2) -- (v_3) -- (v_4);
\end{tikzpicture}
\hspace{10mm}
\begin{tikzpicture}
    \coordinate (v_1) at (0,0);
    \coordinate (v_2) at (2,0);
    \coordinate (v_3) at (2,1);
    \coordinate (v_4) at (1,1);
    \coordinate (v_5) at (1,-2);
    \coordinate (v_s) at (0,-1);
    \fill (v_1) circle(3pt) node[anchor=east]{$v_1$};
    \fill (v_2) circle(3pt) node[anchor=west]{$v_2$};
    \fill (v_3) circle(3pt) node[anchor=south]{$v_3$};
    \fill (v_4) circle(3pt) node[anchor=south]{$v_4$};
    \fill (v_5) circle(3pt) node[anchor=north]{$v_5$};
    \fill (v_s) circle(3pt) node[anchor=north]{$v_s$};
    \draw (v_s) edge[dashed] (v_1);
    \draw (v_4) edge[dashed] (v_5);
    \draw (v_1) -- (v_2) -- (v_3) -- (v_4);
\end{tikzpicture}
\hspace{10mm}
\begin{tikzpicture}
    \coordinate (v_1) at (0,0);
    \coordinate (v_2) at (2,0);
    \coordinate (v_3) at (2,1);
    \coordinate (v_4) at (-1,1);
    \coordinate (v_5) at (-1,2);
    \coordinate (v_s) at (0,-2);
    \fill (v_1) circle(3pt) node[anchor=east]{$v_1$};
    \fill (v_2) circle(3pt) node[anchor=west]{$v_2$};
    \fill (v_3) circle(3pt) node[anchor=west]{$v_3$};
    \fill (v_4) circle(3pt) node[anchor=east]{$v_4$};
    \fill (v_5) circle(3pt) node[anchor=east]{$v_5$};
    \fill (v_s) circle(3pt) node[anchor=north]{$v_s$};
    \draw (v_s) edge[dashed] (v_1);
    \draw (v_4) edge[dashed] (v_5);
    \draw (v_1) -- (v_2) -- (v_3) -- (v_4);
\end{tikzpicture}
\hspace{10mm}
\begin{tikzpicture}
    \coordinate (v_1) at (0,0);
    \coordinate (v_2) at (2,0);
    \coordinate (v_3) at (2,1);
    \coordinate (v_4) at (3,1);
    \coordinate (v_5) at (3,-2);
    \coordinate (v_s) at (0,-1);
    \fill (v_1) circle(3pt) node[anchor=east]{$v_1$};
    \fill (v_2) circle(3pt) node[anchor=west]{$v_2$};
    \fill (v_3) circle(3pt) node[anchor=south]{$v_3$};
    \fill (v_4) circle(3pt) node[anchor=south]{$v_4$};
    \fill (v_5) circle(3pt) node[anchor=north]{$v_5$};
    \fill (v_s) circle(3pt) node[anchor=north]{$v_s$};
    \draw (v_s) edge[dashed] (v_1);
    \draw (v_4) edge[dashed] (v_5);
    \draw (v_1) -- (v_2) -- (v_3) -- (v_4);
\end{tikzpicture}
        }
        \caption{From the left: the cases $y_1=y_4,$ $y_1 < y_4 < y_2$, $y_4 < y_1$, $y_4 > y_2,$ respectively.}
        \label{fig:rectangular-cases}
    \end{figure}
    Since $s \geq 5,$ $v_1$ and $v_4$ are non-consecutive vertices on $C$. 
    
    We consider several cases based on the relative position of $y_4$ with respect to $y_1$ and $y_2$, see Figure \ref{fig:rectangular-cases}. If $y_4=y_1$, then $v_1$ and $v_4$ are non-consecutive vertices on $C$ that have the same second coordinate. By our hypothesis, this case is not possible.

    Consider now the case $y_1 < y_4 < y_2$, see Figure \ref{fig:rectangular-cases-1}. By our hypothesis, the vertex $v=(x_1,y_4)$ of $G$ is not on $C$ since otherwise, $v,v_1,v_2$ would be three distinct vertices on $C$ with the same first coordinate. Because $v,v_4,v_5$ have the same first coordinates, $C_1:=\{v_4,v,v_5\}$ is a triangle in $G$, and $C_1$ shares exactly one edge $\{v_4,v_5\}$ with $C.$ Furthermore, since $v_1,v,v_2$ have the same second coordinates, $C_2=\{v_1,v,v_2\}$ is a triangle in $G$ that shares exactly one edge $\{v_1,v_2\}$ with $C$. Hence, we may choose $c_1,c_2 \in \{1,-1\}$ such that:
    \begin{gather*}
        \al = T(C)+ c_1 \delta_1^T(C_1) + c_2 \delta_1^T(C_2) \in \ker \delta_0^T,
    \end{gather*}
    and the terms $\{v_4,v_5\}$ and $\{v_1,v_2\}$ vanish in $\al$. Therefore, the support of the $\kdt{0}{G}$ element $\al$ consists of precisely the edges in the cycles  $C'=(v,v_5,\dots,v_s,v_1)$ and $C''=(v,v_2,v_3,v_4)$, which share exactly one vertex, namely $v$. By Lemma \ref{lemma:two-cycle-join-vertex}, there are $c',c'' \in \FF$ such that
    \begin{gather*}
        \al = c'T(C')+c''T(C'').
    \end{gather*}
    Both cycles $C'$ and $C''$ contain only vertical and horizontal edges, and their lengths are strictly less than $s.$ By the induction hypothesis, $T(C'), T(C'') \in \idt{1}{G}$ and consequently, $\alpha\in \idt{1}{G}$. Finally, as $C_1$ and $C_2$ are triangles, $T(C_1),T(C_2) \in \idt{1}{G}$. We conclude that
    \begin{gather*}
        T(C)=\al-c_1 \delta_1^T(C_1)-c_2 \delta_1^T(C_2)\in \idt{1}{G}.
    \end{gather*}
    This proves the case $y_1 < y_4 < y_2.$
    
    \begin{figure}[h]
        \centering
        \resizebox{\textwidth}{!}{
                \begin{tikzpicture}[baseline=(v_4.base)]
        \coordinate (v_1) at (0,0);
        \coordinate (v_2) at (2,0);
        \coordinate (v_3) at (2,1);
        \coordinate (v_4) at (1,1);
        \coordinate (v_5) at (1,3);
        \coordinate (v_s) at (0,-1);
        \fill (v_1) circle(3pt) node[anchor=east]{$v_1$};
        \fill (v_2) circle(3pt) node[anchor=west]{$v_2$};
        \fill (v_3) circle(3pt) node[anchor=west]{$v_3$};
        \fill (v_4) circle(3pt) node[anchor=east]{$v_4$};
        \fill (v_5) circle(3pt) node[anchor=east]{$v_5$};
        \fill (v_s) circle(3pt) node[anchor=east]{$v_s$};
        \draw (v_s) edge[dashed] (v_1);
        \draw (v_4) edge[dashed] (v_5);
        \draw (v_1) -- (v_2) -- (v_3) -- (v_4);
        \draw (v_s) edge[dashed,out=135,in=210] (v_5);
        \filldraw (.5,0.65) circle(0pt) node[anchor=center]{$C$};
    \end{tikzpicture}
    \hspace{10mm}
    \begin{tikzpicture}[baseline=(v_4.base)]
        \coordinate (v_1) at (0,0);
        \coordinate (v_2) at (2,0);
        \coordinate (v) at (1,0);
        \fill (v_1) circle(3pt) node[anchor=north]{$v_1$};
        \fill (v_2) circle(3pt) node[anchor=north]{$v_2$};
        \fill (v) circle(3pt) node[anchor=north]{$v$};
        \draw (v_1) -- (v) -- (v_2);
        \draw (v_1) edge[out=-60, in=-120] (v_2);
        \filldraw (1,0) circle(0pt) node[anchor=south]{$C_2$};
    \end{tikzpicture}
    \hspace{10mm}
    \begin{tikzpicture}[baseline=(v_4.base)]
        \coordinate (v_1) at (0,0);
        \coordinate (v_2) at (2,0);
        \coordinate (v_3) at (2,1);
        \coordinate (v_4) at (1,1);
        \coordinate (v_5) at (1,3);
        \coordinate (v_s) at (0,-1);
        \coordinate (v) at (1,0);
        \fill (v_1) circle(3pt) node[anchor=east]{$v_1$};
        \fill (v_2) circle(3pt) node[anchor=west]{$v_2$};
        \fill (v_3) circle(3pt) node[anchor=west]{$v_3$};
        \fill (v_4) circle(3pt) node[anchor=east]{$v_4$};
        \fill (v_5) circle(3pt) node[anchor=east]{$v_5$};
        \fill (v_s) circle(3pt) node[anchor=east]{$v_s$};
        \fill (v) circle(3pt) node[anchor=north]{$v$};
        \draw (v_s) edge[dashed] (v_1);
        \draw (v_4) edge[dashed] (v_5);
        \draw (v_1) -- (v_2) -- (v_3) -- (v_4);
        \draw (v_s) edge[dashed,out=135,in=210] (v_5);
        % \filldraw (1,0.25) circle(0pt) node[anchor=south]{$C'$};
    \end{tikzpicture}
    \hspace{10mm}
    \begin{tikzpicture}[baseline=(v_4.base)]
        \coordinate (v_1) at (0,0);
        \coordinate (v_2) at (2,0);
        \coordinate (v_3) at (2,1);
        \coordinate (v_4) at (1,1);
        \coordinate (v_5) at (1,3);
        \coordinate (v_s) at (0,-1);
        \coordinate (v) at (1,0);
        \fill (v_1) circle(3pt) node[anchor=east]{$v_1$};
        \fill (v_2) circle(3pt) node[anchor=west]{$v_2$};
        \fill (v_3) circle(3pt) node[anchor=west]{$v_3$};
        \fill (v_4) circle(3pt) node[anchor=east]{$v_4$};
        \fill (v_5) circle(3pt) node[anchor=east]{$v_5$};
        \fill (v_s) circle(3pt) node[anchor=north]{$v_s$};
        \fill (v) circle(3pt) node[anchor=north]{$v$};
        \draw (v) -- (v_2) -- (v_3) -- (v_3) -- (v_4) -- (v) -- (v_1) -- (v_s);
        \draw (v_5) -- (v_4);
        \draw (v_s) edge[dashed,out=135,in=210] (v_5);
        \filldraw (1.5,0.5) circle(0pt) node[anchor=center]{$C''$};
        \filldraw (0.25,0.5) circle(0pt) node[anchor=center]{$C'$};
    \end{tikzpicture}
        }
        \caption{The case $y_1 < y_4 < y_2.$ Curved, dashed segments represent arbitrary length paths in corresponding cycles. From the left: cycles $C,C_1,C',$ and together, $C_2$ and $C_3,$ respectively.}
        \label{fig:rectangular-cases-1}
    \end{figure}

    Consider the case $y_4 < y_1$, see Figure \ref{fig:rectangular-cases-2}. We may use a similar argument as in the previous case, with vertex $v=(x_4,y_1)$, triangles $C_1=\{v_4,v,v_3\}, C_2=\{v,v_1,v_s\},$ and cycles $C''=(v,v_1,v_2,v_3)$ and $C'=(v,v_4,\dots,v_s)$ to show that $T(C) \in \idt{1}{G}$ in this case as well. 
    \begin{figure}[htbp!]
        \centering
        \resizebox{\textwidth}{!}{
                \begin{tikzpicture}[baseline=(v_4.base)]
        \coordinate (v_1) at (0,0);
        \coordinate (v_2) at (2,0);
        \coordinate (v_3) at (2,1);
        \coordinate (v_4) at (-1,1);
        \coordinate (v_5) at (-1,2);
        \coordinate (v_s) at (0,-2);
        \fill (v_1) circle(3pt) node[anchor=east]{$v_1$};
        \fill (v_2) circle(3pt) node[anchor=west]{$v_2$};
        \fill (v_3) circle(3pt) node[anchor=west]{$v_3$};
        \fill (v_4) circle(3pt) node[anchor=east]{$v_4$};
        \fill (v_5) circle(3pt) node[anchor=east]{$v_5$};
        \fill (v_s) circle(3pt) node[anchor=north]{$v_s$};
        \draw (v_s) edge[dashed] (v_1);
        \draw (v_4) edge[dashed] (v_5);
        \draw (v_1) -- (v_2) -- (v_3) -- (v_4);
        \draw (v_s) edge[dashed, out=150,in=210] (v_5);
        \filldraw (.5,0.65) circle(0pt) node[anchor=center]{$C$};
    \end{tikzpicture}
    
    \begin{tikzpicture}[baseline=(v_4.base)]
        \coordinate (v_3) at (2,1);
        \coordinate (v_4) at (-1,1);
        \coordinate (v) at (0,1);
        \fill (v_3) circle(3pt) node[anchor=west]{$v_3$};
        \fill (v_4) circle(3pt) node[anchor=east]{$v_4$};
        \fill (v) circle(3pt) node[anchor=north]{$v$};
        \draw (v_4) -- (v) -- (v_3);
        \draw (v_4) edge[out=-60,in=-120] (v_3);
        \filldraw (.5,0.65) circle(0pt) node[anchor=center]{$C_1$};
    \end{tikzpicture}
    \begin{tikzpicture}[baseline=(v_4.base)]
        \coordinate (v_1) at (0,0);
        \coordinate (v_2) at (2,0);
        \coordinate (v_3) at (2,1);
        \coordinate (v_4) at (-1,1);
        \coordinate (v_5) at (-1,2);
        \coordinate (v_s) at (0,-2);
        \coordinate (v) at (0,1);
        \fill (v_1) circle(3pt) node[anchor=east]{$v_1$};
        \fill (v_2) circle(3pt) node[anchor=west]{$v_2$};
        \fill (v_3) circle(3pt) node[anchor=west]{$v_3$};
        \fill (v_4) circle(3pt) node[anchor=east]{$v_4$};
        \fill (v_5) circle(3pt) node[anchor=east]{$v_5$};
        \fill (v_s) circle(3pt) node[anchor=north]{$v_s$};
        \fill (v) circle(3pt) node[anchor=north]{$v$};
        \draw (v_s) edge[dashed] (v_1);
        \draw (v_4) edge[dashed] (v_5);
        \draw (v_1) -- (v_2) -- (v_3) -- (v_4);
        \draw (v_s) edge[dashed, out=150,in=210] (v_5);
        % \filldraw (.75,0.5) circle(0pt) node[anchor=center]{$C'$};
        \filldraw (.95,0.5) circle(0pt) node[anchor=center]{$C''$};
    \end{tikzpicture}
    \begin{tikzpicture}[baseline=(v_4.base)]
        \coordinate (v_1) at (0,0);
        \coordinate (v_2) at (2,0);
        \coordinate (v_3) at (2,1);
        \coordinate (v_4) at (-1,1);
        \coordinate (v_5) at (-1,2);
        \coordinate (v_s) at (0,-2);
        \coordinate (v) at (0,1);
        \fill (v_1) circle(3pt) node[anchor=east]{$v_1$};
        \fill (v_2) circle(3pt) node[anchor=west]{$v_2$};
        \fill (v_3) circle(3pt) node[anchor=west]{$v_3$};
        \fill (v_4) circle(3pt) node[anchor=east]{$v_4$};
        \fill (v_5) circle(3pt) node[anchor=east]{$v_5$};
        \fill (v_s) circle(3pt) node[anchor=north]{$v_s$};
        \fill (v) circle(3pt) node[anchor=north west]{$v$};
        \draw (v_s) edge[dashed] (v_1);
        \draw (v_4) edge[dashed] (v_5);
        \draw (v) -- (v_1);
        \draw (v_1) -- (v_2) -- (v_3) -- (v_4);
        \draw (v_s) edge[dashed, out=150,in=210] (v_5);
        % \filldraw (.75,0.5) circle(0pt) node[anchor=center]{$C'$};
        \filldraw (1,0.5) circle(0pt) node[anchor=center]{$C''$};
        \filldraw (-.75,0.5) circle(0pt) node[anchor=center]{$C'$};
    \end{tikzpicture}
        }
        \caption{The case $y_4 < y_1.$ Curved, dashed segments represent arbitrary length paths in corresponding cycles. From the left: cycles $C,C_1,C',$ and together, $C_2$ and $C_3,$ respectively.}
        \label{fig:rectangular-cases-2}
    \end{figure}

We now deal with the remaining case $y_4 > y_2$, see Figure \ref{fig:rectangular-cases-3}. The vertex $v=(x_4,y_2)$ is not on $C$ since, in that case, $v,v_2,v_3$ would be three vertices on $C$ with the same second coordinate, a contradiction to the hypothesis. Let $C_1=(v_1,v_2,v_3,v),$ and consider
    \begin{gather*}
        \beta = T(C)-T(C_1) \in \kdt{0}{G}.
    \end{gather*}
    Since the edge $\{v_1,v_2\}$ appears with the same coefficient $1$ in both $T(C)$ and $T(C_1),$ it vanishes in $\beta.$ Thus, the support of $\beta$ contains all edges of the cycle $C'=(v,v_3,v_4,\dots,v_s,v_1),$ and, possibly, the edge $\{v_2,v_3\}$. If $G'$ is the subgraph of $G$ consisting of the edges and vertices in the support of $\beta,$ then $G'$ cannot have a vertex of degree $1$ as $\beta\in \kdt{0}{G}$. Hence, the edge $\{v_2,v_3\}$ is not in the support of $\beta$ and the support of $\beta$ is precisely the set of edges of the cycle $C'$.
    
    By Proposition \ref{prop:TC}, there is some nonzero $c$ such that:
    \begin{gather*}
        \beta = T(C) - T(C_1) = c T(C'), \text{ or,}\\
        T(C) = cT(C')+T(C_1).
    \end{gather*}
    Here, $C'$ is a cycle in $C$ of length $s$ and no diagonal edges. Moreover, $C'$ has three vertices with the same first coordinate (in fact, two consecutive horizontal edges). By the first part of the inductive argument and the inductive hypothesis, $T(C') \in \idt{1}{G}.$ Since $C_1$ is a rectangular cycle of length $4$, by Proposition \ref{corollary:rectangular-4-cycle-field}, we have $T(C_1) \in \idt{1}{G}.$ Thus, we conclude that $T(C) = cT(C')+T(C_1) \in \idt{1}{G}$. This finishes our proof.

\begin{figure}[htbp!]
        \centering
        \resizebox{0.75\textwidth}{!}{
                \begin{tikzpicture}[baseline=(v_3.base)]
        \coordinate (v_1) at (0,0);
        \coordinate (v_2) at (2,0);
        \coordinate (v_3) at (2,1);
        \coordinate (v_4) at (3,1);
        \coordinate (v_5) at (3,-2);
        \coordinate (v_s) at (0,-1);
        \fill (v_1) circle(3pt) node[anchor=east]{$v_1$};
        \fill (v_2) circle(3pt) node[anchor=west]{$v_2$};
        \fill (v_3) circle(3pt) node[anchor=south]{$v_3$};
        \fill (v_4) circle(3pt) node[anchor=south]{$v_4$};
        \fill (v_5) circle(3pt) node[anchor=north]{$v_5$};
        \fill (v_s) circle(3pt) node[anchor=east]{$v_s$};
        \draw (v_s) edge[dashed] (v_1);
        \draw (v_4) edge[dashed] (v_5);
        \draw (v_1) -- (v_2) -- (v_3) -- (v_4);
        \draw (v_s) edge[dashed,out=-90,in=210] (v_5);
        \draw (1.5,-1) circle(0pt) node[anchor=center]{$C$};
    \end{tikzpicture}
    \hspace{10mm}
    \begin{tikzpicture}[baseline=(v_3.base)]
        \coordinate (v) at (0,1);
        \coordinate (v_1) at (0,0);
        \coordinate (v_2) at (2,0);
        \coordinate (v_3) at (2,1);
        \fill (v) circle(3pt) node[anchor=east]{$v$};
        \fill (v_1) circle(3pt) node[anchor=east]{$v_1$};
        \fill (v_2) circle(3pt) node[anchor=west]{$v_2$};
        \fill (v_3) circle(3pt) node[anchor=west]{$v_3$};
        \draw (v_1) -- (v_2) -- (v_3) -- (v) -- (v_1);
        \draw (1,0.5) circle(0pt) node[anchor=center]{$C_1$};
    \end{tikzpicture}
    \hspace{10mm}
    \begin{tikzpicture}[baseline=(v_3.base)]
        \coordinate (v) at (0,1);
        \coordinate (v_1) at (0,0);
        \coordinate (v_3) at (2,1);
        \coordinate (v_4) at (3,1);
        \coordinate (v_5) at (3,-2);
        \coordinate (v_s) at (0,-1);
        \fill (v) circle(3pt) node[anchor=east]{$v$};
        \fill (v_1) circle(3pt) node[anchor=east]{$v_1$};
        \fill (v_3) circle(3pt) node[anchor=south]{$v_3$};
        \fill (v_4) circle(3pt) node[anchor=west]{$v_4$};
        \fill (v_5) circle(3pt) node[anchor=west]{$v_5$};
        \fill (v_s) circle(3pt) node[anchor=east]{$v_s$};
        \draw (v_s) edge (v_1);
        \draw (v_4) edge (v_5);
        \draw (v_1) -- (v) -- (v_3) -- (v_4);
        \draw (v_s) edge[out=-90,in=210] (v_5);
        \draw (1.5,-0.5) circle(0pt) node[anchor=center]{$C'$};
    \end{tikzpicture}
        }
        \caption{The case $y_3 < y_4.$ Curved, dashed segments represent arbitrary length paths in corresponding cycles. From the left: cycles $C,C_1,C',$ and together, $C_2$ and $C_3,$ respectively.}
        \label{fig:rectangular-cases-3}
    \end{figure}
\end{proof}

\subsubsection{Cycles with Diagonal Edges}

As before, assume that $M$ is a Latin square of order $n\geq 4$ and $G$ is the strongly regular graph associated with $M$. Let $\FF$ be an arbitrary but fixed field. 

In this section, we will show that if $C$ is any cycle in $G$, then $T(C) \in \idt{1}{G}$. We use induction on the number of diagonal edges of the cycle and we reduce cycles with diagonal edges to rectangular cycles that we resolved in the previous section. In what follows, $\rho(C)$ denotes the number of diagonal edges of a cycle $C$ in $G$.

\begin{theorem} \label{vanishing-latin-h1}
    Let $M$ be a Latin square of order $n \geq 4,$ and $G$ the strongly regular graph associated with $M$. Let $\FF$ be a field. If $C$ is a cycle in $G$, then $T(C) \in \idt{1}{G}.$ Consequently, $\dim(\idt{1}{G})=\frac{3n^2(n-1)}{2}-n^2+1$ and $\hlcl{G}{\FF}=0.$
\end{theorem}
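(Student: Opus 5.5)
We argue by induction on $\rho(C)$, the number of diagonal edges of $C$, with an inner induction on the length of $C$. If $\rho(C)=0$, then $C$ is rectangular and Proposition \ref{prop:rectangular-cycles} gives $T(C)\in \idt{1}{G}$. Suppose $\rho(C)\geq 1$ and that the claim holds for all cycles with fewer diagonal edges, and for all cycles with $\rho(C)$ diagonal edges but shorter length. If $C$ has a chord $\{v_i,v_j\}$, Lemma \ref{lemma:cycle-split-over-chord} writes $T(C)=c_1T(C_1)+c_2T(C_2)$. Each $C_i$ is shorter, and between them the two cycles contain the diagonal edges of $C$ plus possibly the chord. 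This does not directly decrease $\rho$, so the chord case needs care. To avoid the issue, we order cycles lexicographically by $(\rho(C),\ell(C))$ and reduce to the induced case by a different route: we replace a diagonal edge by a rectangular path.

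The key surgery is as follows. Let $\{u,w\}$ be a diagonal edge of $C$ with $u=(x,y)$, $w=(x',y')$, $x\neq x'$, $y\neq y'$, and $M[u]=M[w]$. The vertex $z=(x,y')$ is adjacent to both $u$ (same row) and $w$ (same column). The triangle $\{u,z,w\}$ exists, since $u\sim w$. By the elementary identity $T(C)=\pm T(C^\ast)\pm \delta_1^T\{u,z,w\}$, where $C^\ast$ is obtained from $C$ by replacing the edge $uw$ with the path $u,z,w$, this kills one diagonal edge. If $z\notin C$, then $C^\ast$ is a genuine cycle with $\rho(C^\ast)=\rho(C)-1$, and the outer induction applies. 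We prove this identity as in the surgery arguments above: add $\pm\delta_1^T\{u,z,w\}$ to cancel $\{u,w\}$, and the result has support exactly the edges of $C^\ast$, so Proposition \ref{prop:TC} applies.

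If $z$ lies on $C$, we instead try the other corner $z'=(x',y)$. If both corners lie on $C$, say $z=v_i$, then $C$ together with the edges $uz$ and $zw$ splits as in Case 2(c) of Theorem \ref{thm:4-vertex-common-neighbor}. Adding $\pm\delta_1^T\{u,z,w\}$ removes $\{u,w\}$ and leaves an element supported on two cycles sharing the single vertex $z$. These cycles contain the path pieces of $C$ plus the rectangular edges $uz$, $zw$, so they have strictly fewer diagonal edges. Lemma \ref{lemma:two-cycle-join-vertex} then writes this element as $c'T(C')+c''T(C'')$, and the induction hypothesis applies to both cycles. (If $z$ is adjacent on $C$ to $u$ or $w$, one of the pieces degenerates to an edge. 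The same argument works with a single cycle, using Proposition \ref{prop:TC}.)

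The main obstacle is making these case distinctions rigorous. We must check that the support argument really yields exactly one or two cycles meeting in a single vertex, in particular when $z$ is a neighbor of $u$ or $w$ on $C$, where edges coincide. This requires tracking which edges cancel, as in Case 2(a) of Theorem \ref{thm:4-vertex-common-neighbor}. Note that the chord lemma is never needed, so the induction is simply on $\rho$.

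For the dimension statement, since every cycle lies in $\idt{1}{G}$, Corollary \ref{cor:TCinduced} gives $\kdt{0}{G}=\idt{1}{G}$. Theorem \ref{thm:dim-cycle-space} then gives the dimension $m-n^2+1$, where $m=\tfrac{n^2\cdot 3(n-1)}{2}$ and $G$ is connected. This yields $\hlcl{G}{\FF}=0$.
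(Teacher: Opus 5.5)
Your proposal is correct and follows essentially the same route as the paper's proof. Both argue by induction on the number of diagonal edges, and both split into three cases according to the corner $z$:
\begin{itemize}
\item if $z\notin C$, replace the diagonal edge $\{u,w\}$ by the rectangular path through $z$;
\item if $z$ is a $C$-neighbour of $u$ or $w$, collapse to a single cycle with one fewer diagonal edge (the paper's Case 2(a), where the possibly surviving edge is removed by the pendant-edge argument you anticipate);
\item otherwise, split via Lemma \ref{lemma:two-cycle-join-vertex} into two cycles meeting only at $z$ (Case 2(b)).
\end{itemize}
Your opening detour about chords and a lexicographic order is unnecessary, as you yourself conclude, and your dimension count is exactly the paper's.
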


\begin{proof}
    The proof will be based on $t,$ the number of diagonal edges in $C.$ If $t=0,$ then $C$ is a rectangular cycle, and by Proposition \ref{prop:rectangular-cycles}, we have $T(C) \in \idt{1}{G}.$ Assume that if $C$ is any cycle with $\rho(C) < t,$ then $T(C) \in \idt{1}{G},$ where $t \geq 1$ is fixed. Now, fix a cycle $C$ with $\rho(C)=t.$

    Choose any diagonal edge $e=\{v_1,v_2\}$ on $C$, and let $C=(v_1,v_2,\dots,v_s)$ for some $s \geq 3.$ If $s=3$, then $C$ is a triangle and $T(C)= \delta_1^T(C)$ or $-\delta_1^T(C),$ and in either case, $T(C) \in \idt{1}{G}.$ So, we assume that $s > 3.$ 

    Let $v_1=(x_1,y_1), v_2=(x_2,y_2),$ where necessarily, $x_1 \not = x_2$ and $y_1 \not = y_2.$ We consider two cases as follows.
    \begin{enumerate}[\textbf{Case}(1):]
        \item At least one of the two vertices $(x_1,y_2)$ and $(x_2,y_1)$ is not on $C.$ 
        
        Without loss of generality, assume that $v=(x_1,y_2)$ is not on $C$. Therefore, the edges $\{v_1,v\}$ and $\{v_2,v\}$ are also not in $C$. Moreover, $C_1=(v_1,v_2,v)$ is a triangle in $G$ and 
        \begin{gather*}
            \gamma = T(C) - T(C_1) \in \kdt{0}{G}.
        \end{gather*}
        Since the edge $\{v_1,v_2\}$ appears with coefficient $1$ in both $T(C)$ and $T(C_1),$ it vanishes in $\gamma$. Because $C$ and $C_1$ share exactly one edge $\{v_1,v_2\}$, the remaining edges of $C_1$ and $C$ appear in $\gamma$ with coefficients in $\{1,-1\}$. Thus, if we denote $C'=(v_1,v,v_2,\dots,v_s)$, then, by Proposition \ref{prop:TC}, there is a nonzero constant $c$ such that:
        \begin{gather*}
            \gamma = T(C) - T(C_1) = cT(C'), \text{ or,}\\
            T(C) = T(C_1) + cT(C').
        \end{gather*}
        Because $\{v,v_1\}$ is horizontal and $\{v,v_2\}$ is vertical, $\rho(C') = t-1 < \rho(C)$. By the inductive hypothesis, $cT(C') \in \idt{1}{G}.$ As $C_1$ is a triangle, $T(C_1) \in \idt{1}{G}$. Hence, $T(C)=T(C_1)+cT(C') \in \idt{1}{G}$ and the proof of this case is complete.
        
        \item We now consider the case in which both the vertices $(x_1,y_2)$ and $(x_2,y_1)$ are on $C.$ Denote $v=(x_1,y_2).$ We consider two sub-cases.
        \begin{enumerate}[\textbf{Case}(a)]
            \item The vertex $v$ is in $C$ and at least one of the edges $\{v,v_1\}$ and $\{v,v_2\}$ is in $C$. Since $s>3$, not both of these edges can be in $C$. Without loss of generality, assume that $\{v,v_2\}$ is in $C$ and $\{v,v_1\}$ is not in C. Therefore, $v_1,v_2,v$ are three consecutive vertices on $C$, in that order. %Moreover, since $s > 3,$ the edge $\{v,v_1\}$ is not on $C.$ 
            
            Denote $C_1=(v_1,v_2,v).$ Let
            \begin{gather*}
                \varepsilon = T(C)-T(C_1) \in \kdt{0}{G}.
            \end{gather*}
            By a similar argument as above, the edge $\{v_1,v_2\}$ vanishes in the support of $\varepsilon$. Thus, the support of $\varepsilon$ consists of all the edges of the cycle $C'=(v_1,v,v_4,\dots,v_s)$, and possibly the edge $\{v,v_2\}$. 
            
            But since no edge of the cycle $C'$ is incident with the vertex $v_2$ and $\varepsilon \in \kdt{0}{G},$ we conclude that the edge $\{v,v_2\}$ is not in the support of $\varepsilon$. 
            
            Consequently, the $\kdt{0}{G}$ element $\varepsilon$ has the same support as the edge set of the cycle $C'$. By Proposition \ref{prop:TC}, there is some nonzero $c \in \FF$ such that:
            \begin{gather*}
                \varepsilon = T(C)-T(C_1) = cT(C'), \text{ or,}\\
                T(C) = T(C_1) + cT(C').
            \end{gather*}
            Finally, observe that
            \begin{gather*}
                \text{set of edges of $C'$} = \big(\text{set of edges of $C$} \setminus (\{v_1,v_2\}\cup \{v_2,v\}) \big) \bigcup \{\{v,v_1\}\},
            \end{gather*}
            where the edge $\{v,v_1\}$ is horizontal. Thus, $\rho(C')=t-1 < \rho(C),$ and by the inductive hypothesis, $T(C') \in \idt{1}{G}.$ Since $C_1$ is a triangle, we have $T(C_1) \in \idt{1}{G},$ and thus, $T(C)=T(C_1) + cT(C') \in \idt{1}{G}.$ 
            
            \item In this sub-case, $v$ is on $C$ but none of the edges $\{v,v_1\}$ and $\{v,v_2\}$ is on $C.$ Let $v=v_i$ for some $4 \leq i \leq s-1.$ Now $C_1=\{v_1,v_i,v_2\}$ is a triangle in $G$, which shares exactly one edge with $C$, namely $\{v_1,v_2\}$. We may choose $c\in \{1,-1\}$ such that in the $\kdt{0}{G}$ element $\varphi$ defined by:
            \begin{gather*}
                \varphi = T(C)+cT(C_1),
            \end{gather*}
            the term $\{v_1,v_2\}$ vanishes. Hence, the support of $\varphi$ is precisely the set of edges in the cycles $C_2=(v_2,v_3,\dots,v_i)$ and $C_3=(v_1,v_i,\dots,v_s)$. Note that the set of diagonal edges appearing in $C_2$ or $C_3$ is a proper subset of the set of diagonal edges appearing in $C$. By Lemma \ref{lemma:two-cycle-join-vertex}, there exists $c_2,c_3 \in \FF$ such that:
            \begin{gather*}
                \varphi = c_2T(C_2)+c_3T(C_3).
            \end{gather*}
            However, $\rho(C_2) < \rho(C)=t$ and $\rho(C_3)< \rho(C)=t$. Thus, by the induction hypothesis,  $T(C_2), T(C_3) \in \idt{1}{G}$. Consequently,
            \begin{gather*}
                T(C)=\varphi-cT(C_1)=c_2T(C_2)+c_3T(C_3)-cT(C_1) \in \idt{1}{G}.
            \end{gather*}
        \end{enumerate}
        Thus, in both sub-cases of Case (2), we have $T(C) \in \idt{1}{G}.$
    \end{enumerate}
    Thus, in all cases, $T(C) \in \idt{1}{G}$. By the induction principle, we conclude that $T(C) \in \idt{1}{G}$ for every cycle $C$ in $G$. Consequently,
    \begin{gather*}
        \kdt{0}{G} \subset \idt{1}{G} \subset \kdt{0}{G} \implies \kdt{0}{G} = \idt{1}{G} \implies \hlcl{G}{\FF}=0.
    \end{gather*}
    Furthermore, since $G$ is connected, by Theorem \ref{thm:dim-cycle-space}, we have:
    \begin{gather*}
        \didt{1}{G}=\dkdt{0}{G}=|E(G)|-|V(G)|+1= \frac{3n^2(n-1)}{2}-n^2+1.
    \end{gather*}
    This completes the proof of the theorem.
\end{proof}

\subsection{$H_2$ of Clique Complexes of Latin Square Graphs} \label{section:h-2-latin-squares}

Let $M$ be a Latin square of order $n \geq 4$ and $G$ the strongly regular graph associated with $M.$ Let $\FF$ be an arbitrary field. In the last section, we have proved that $\didt{1,\FF}{G}=\dkdt{0,\FF}{G}$. Denote the strongly regular graph parameters of $G$ by $(v,k,\lambda,\mu)=(n^2,3(n-1),n,6).$ The number of triangles in $G$ is given by 
\begin{gather*}
    \frac{1}{3}|E(G)|\lambda = \frac{3n^2(n-1)}{6} n = \frac{n^3(n-1)}{2}. 
\end{gather*}
By the fundamental theorem of linear algebra,
\begin{gather} \label{eq:eq-3}
    \dkdt{1,\FF}{G} = \frac{n^3(n-1)}{2} - \didt{1,\FF}{G}
    = \frac{n^3(n-1)}{2} - \large\left(\frac{3n^2(n-1)}{2}-n^2+1 \large \right)\\ =\frac{n^2(n^2-4n+5)}{2}-1.
\end{gather}

Recall that an intercalate in the Latin square $M$ is a $2 \times 2$ Latin subsquare of $M$. We will denote by $I(M)$ the number of intercalates of $M$. Observe that if $v_1,v_2,v_3,v_4$ are four distinct vertices forming a $K_4$ in $G$, then exactly one of the following four statements is true:
\begin{itemize}
    \item $v_1,v_2,v_3,v_4$ have the same first coordinates,
    \item $v_1,v_2,v_3,v_4$ have the same second coordinates,
    \item $v_1,v_2,v_3,v_4$ have pairwise distinct first coordinates, pairwise distinct second coordinates, and $M[v_1]=M[v_2]=M[v_3]=M[v_4],$
    \item $v_1,v_2,v_3,v_4$ is an intercalate in $M.$
\end{itemize}
Let $S_1,S_2,S_3,S_4$ be the families of subgraphs of $G$ isomorphic to $K_4$, satisfying the above conditions, respectively. Furthermore, if $G_1$ and $G_2$ are subgraphs of $G$ isomorphic to $K_4$ and $G_1 \in S_i$ and $G_2 \in S_j$ with $i \not = j,$ then $G_1$ and $G_2$ do not share a triangles. Thus, by grouping the columns of $\delta_2^T$ by these families, we see that $\delta_2^T$ may be given the form $A_1 \oplus A_2 \oplus A_3 \oplus A_4,$ where $A_i,i=1,2,3$ can be identified with $\bigoplus_{j=1}^n \delta_{2,\FF}^T(K_n).$ It is well known that $\didt{2,\FF}{K_n}=\binom{n-1}{3}$ (Theorem 8 in \cite{meshulam-newman-rabinovich}).  Consequently,
\begin{gather*}
    \didt{2,\FF}{G} = 3n \didt{2,\FF}{K_n} + \rankg{\FF}{A_4} = 3n\binom{n-1}{3} + \rankg{\FF}{A_4}.
\end{gather*}
Finally, we observe that if $G_1$ and $G_2$ are two distinct subgraphs of $G$ belonging to $S_4,$ then their intersection cannot contain a triangle. Consequently, the columns of $A_4$ have disjoint support, and hence, are linearly independent. Thus, $\rankg{\FF}{A_4}=I(M).$ We have thus established:
\begin{prop} \label{prop:dim-im-d-2t}
    Let $M$ be a Latin square of order $n \geq 4$ and $G$ the strongly regular graph associated with $M$. If $\FF$ is any field, then:
    \begin{gather*}
        \didt{2,\FF}{G} = 3n\binom{n-1}{3} + I(M).
    \end{gather*}
\end{prop}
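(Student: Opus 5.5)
The plan is to compute the rank of $\delta_{2,\FF}^T(G)$ by splitting its columns according to the type of tetrahedron, the split already set up in the paragraph preceding the statement. First, I classify the $4$-cliques of $G$. Suppose $K$ is a $K_4$ in $G$. If two of its edges at a vertex have the same type, the third vertex forces all four vertices onto one row, one column, or one symbol class. This uses the facts that a row (respectively, column or symbol class) is a clique $K_n$, and that two distinct lines of different kinds meet in at most one cell. Otherwise, every vertex sees its three neighbours in $K$ by one horizontal, one vertical and one diagonal edge. Then the four cells occupy two rows and two columns and carry two symbols in Latin position, so they form an intercalate. This yields the families $S_1,\dots,S_4$.

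Next, I check that tetrahedra from different families, or two distinct tetrahedra of $S_4$, share no triangle. A triangle lies in $S_1$, $S_2$ or $S_3$ only if all three of its edges are of one type. A triangle lies in $S_4$ only if its three edges are of three different types. A triangle with all three edges of one type lies in a unique row, column or symbol class, and a triangle with mixed edge types lies in at most one intercalate, since three cells of an intercalate determine the fourth. After reordering rows and columns, and noting that reordering vertices only multiplies $\delta_2^T$ by signed permutation-diagonal matrices (as in \cite[Appendix A]{cioaba-guo-ji-mim}), the matrix $\delta_{2,\FF}^T(G)$ becomes block diagonal. The blocks are $3n$ copies of $\delta_{2,\FF}^T(K_n)$, one for each row, column and symbol class, together with the block $A_4$ of intercalate columns.

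It then remains to compute the ranks of the two kinds of blocks. For $K_n$, the simplex on $n$ vertices is acyclic over any field, so
\begin{gather*}
\rankg{}{\delta_2^T(K_n)}=\binom{n-1}{2}-\rankg{}{\delta_1^T(K_n)}\Big|_{\ker}
\end{gather*}
is best handled directly. Exactness at $C_2$ gives $\dim\ker\delta_1^T=\binom n3-\big(\binom n2-(n-1)\big)=\binom{n-1}{3}$, and exactness again gives $\dim\im\delta_2^T=\binom{n-1}{3}$. This is the value cited from \cite{meshulam-newman-rabinovich}. Each column of $A_4$ is $\delta_2^T$ of a single tetrahedron, which is nonzero with $\pm1$ entries. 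Since distinct intercalates have disjoint triangle supports, these columns are linearly independent, so $\rankg{}{A_4}=I(M)$.

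Adding the ranks of the diagonal blocks gives $3n\binom{n-1}{3}+I(M)$. The main obstacle is the careful verification of the classification and of the triangle-disjointness, since block diagonality is exactly what makes the ranks additive. In particular, I must rule out a triangle that lies in both a row $K_n$ and an intercalate. This holds because an intercalate contains at most two cells in any row.
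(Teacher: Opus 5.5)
Your proposal is correct and follows the paper's argument. You classify the tetrahedra into row, column, symbol-class and intercalate types, use triangle-disjointness to split $\delta_2^T$ into $3n$ copies of $\delta_2^T(K_n)$ plus an intercalate block with disjointly supported columns, and add the ranks. You also supply the classification and disjointness checks that the paper only asserts, and you derive $\dim\im\delta_2^T(K_n)=\binom{n-1}{3}$ from acyclicity of the simplex rather than citing Meshulam--Newman--Rabinovich. The stray display just before that computation is garbled and can simply be deleted.
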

In particular, $\didt{2,\FF}{G}$ is a function of $n$ and $I(M),$ and is independent of the field $\FF.$
We also have:
\begin{theorem} \label{theorem:latin-square-h2}
    Let $M$ be a Latin square of order $n \geq 4$ and $G$ the strongly regular graph associated with $M$. If $\FF$ is any field, then:
    \begin{gather*}
        \dim(H_2(\clc{G}, \FF)) = (n-1)^3 - I(M).
    \end{gather*}
\end{theorem}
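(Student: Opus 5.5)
The plan is a direct dimension count: $H_2(\clc{G},\FF)=\kdt{1,\FF}{G}/\idt{2,\FF}{G}$, so
\begin{gather*}
\dim(H_2(\clc{G},\FF)) = \dkdt{1,\FF}{G} - \didt{2,\FF}{G}.
\end{gather*}
Both terms are already available. Equation \eqref{eq:eq-3} gives $\dkdt{1,\FF}{G}=\frac{n^2(n^2-4n+5)}{2}-1$. That formula rests on Theorem \ref{vanishing-latin-h1}, which supplies $\didt{1,\FF}{G}$, together with the triangle count $\frac{n^3(n-1)}{2}$ and rank–nullity for $\delta_1^T$. Proposition \ref{prop:dim-im-d-2t} gives $\didt{2,\FF}{G}=3n\binom{n-1}{3}+I(M)$. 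Both values are independent of $\FF$, so the final answer will be as well.

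It then remains to check the polynomial identity
\begin{gather*}
\frac{n^2(n^2-4n+5)}{2}-1-3n\binom{n-1}{3} = (n-1)^3.
\end{gather*}
First compute $3n\binom{n-1}{3}=\frac{n(n-1)(n-2)(n-3)}{2}=\frac{n^4-6n^3+11n^2-6n}{2}$. Subtracting this from $\frac{n^4-4n^3+5n^2}{2}$ gives $\frac{2n^3-6n^2+6n}{2}=n^3-3n^2+3n$. Subtracting $1$ then yields $(n-1)^3$, as required. As a sanity check, $n=4$ gives $\dkdt{1}{G}=8\cdot 5-1=39$ and $3\cdot 4\binom{3}{3}=12$, so the formula predicts $27-I(M)$, which agrees with $39-12$.

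The computation itself is routine; the real content lies in the two inputs. The first is Proposition \ref{prop:dim-im-d-2t}, and in particular its use of the field-independent rank $\binom{n-1}{3}$ of $\delta_2^T(K_n)$. The second is Theorem \ref{vanishing-latin-h1}, which is what makes $\dkdt{1}{G}$ field-independent. Given those, the main point to watch is that the triangle count uses $\lambda=n$. This holds only for $n\ge 4$, where the graph is strongly regular with parameters $(n^2,3(n-1),n,6)$.
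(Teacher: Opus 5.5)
Your proposal is correct and matches the paper's proof: it computes $\dim H_2$ as $\dkdt{1,\FF}{G}-\didt{2,\FF}{G}$ from \eqref{eq:eq-3} and Proposition~\ref{prop:dim-im-d-2t}, and your expansion of $\frac{n^2(n^2-4n+5)}{2}-1-3n\binom{n-1}{3}=(n-1)^3$ is right. A minor aside: every edge of $G$ has exactly $n$ common neighbours for all $n\geq 2$, so the hypothesis $n\geq 4$ comes from the results you cite, not from the triangle count.
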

In particular, $\dim(H_2(\clc{G}, \FF))$ is a function of $n$ and $I(M),$ and is independent of the field $\FF.$
\begin{proof}
    By the definition of $H_2$ and \eqref{eq:eq-3}, we get that
    \begin{gather*}
        \dim(H_2(\clc{G}, \FF)) = \dkdt{1,\FF}{G}-\didt{2,\FF}{G}\\
        = \frac{n^2(n^2-4n+5)}{2}-1 - \left( 3 n\binom{n-1}{3} + I(M) \right) = (n-1)^3 - I(M).
    \end{gather*}
\end{proof}

The number of intercalates, $I(M)$, in a Latin square $M$ is a classic invariant that has been studied by several authors. See, for example, \cite{mckay-wanless}, \cite{kwan}, and \cite{kwan-sah}. We recall the following bound on $I(M)$ from Theorem 1 in \cite{katherine-wallis}. We also note that \cite{katherine-wallis} used our symbol $I$ for a different quantity.

\begin{theorem} \label{theorem:interclates-bound}
    Let $M$ be a Latin square of order $n$. Then,
    \begin{enumerate}
        \item if $n$ is even, then $I(M) \leq \frac{n^2(n-1)}{4},$ and,
        \item if $n$ is odd, then $I(M) \leq \frac{n(n-1)(n-3)}{4}.$
    \end{enumerate}
\end{theorem}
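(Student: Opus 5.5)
The plan is to count intercalates one pair of rows at a time and to encode each such pair by a permutation of the symbols. Every intercalate occupies exactly two rows of $M$, so
\begin{gather*}
I(M)=\sum_{\{i,i'\}} I_{i,i'},
\end{gather*}
where the sum runs over the $\binom{n}{2}$ unordered pairs of distinct rows and $I_{i,i'}$ is the number of intercalates whose two rows are $i$ and $i'$. It therefore suffices to bound each $I_{i,i'}$ by $\lfloor n/2\rfloor$ when $n$ is even, and by $(n-3)/2$ when $n$ is odd.

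For fixed rows $i\neq i'$, define $\sigma=\sigma_{i,i'}$ on the symbols $\{1,\dots,n\}$ by $\sigma(M(i,c))=M(i',c)$ for every column $c$. This is well defined and bijective because each symbol occurs exactly once in row $i$ and once in row $i'$. It has no fixed points, since $M(i,c)=M(i',c)$ would repeat a symbol in column $c$. Thus $\sigma$ is a derangement.

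The key observation is that intercalates in rows $i,i'$ correspond bijectively to the $2$-cycles of $\sigma$.
\begin{itemize}
\item Given an intercalate on rows $i,i'$ and columns $c,c'$, put $s=M(i,c)$ and $t=M(i,c')$. Then $M(i',c)=t$ and $M(i',c')=s$, so $\sigma(s)=t$ and $\sigma(t)=s$, which is a $2$-cycle $(s\,t)$.
\item Conversely, a $2$-cycle $(s\,t)$ of $\sigma$ determines the columns $c,c'$ in which row $i$ contains $s$ and $t$, and these four cells form an intercalate.
\item The two constructions are mutually inverse, since the intercalate is determined by its two rows and two columns.
\end{itemize}
Hence $I_{i,i'}$ equals the number of $2$-cycles of $\sigma_{i,i'}$.

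It remains to bound the number of $2$-cycles of a derangement on $n$ points. The $2$-cycles are disjoint, so there are at most $\lfloor n/2\rfloor$ of them.
\begin{itemize}
\item If $n$ is even, this gives $I_{i,i'}\le n/2$, and summing over row pairs yields $I(M)\le \binom{n}{2}\frac n2=\frac{n^2(n-1)}{4}$.
\item If $n$ is odd, $\sigma$ cannot consist only of $2$-cycles, and it has no fixed points. So at least one cycle of length at least $3$ remains, and the $2$-cycles cover at most $n-3$ points. This gives at most $(n-3)/2$ of them, and therefore $I(M)\le \binom n2\frac{n-3}{2}=\frac{n(n-1)(n-3)}{4}$.
\end{itemize}

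The only delicate step is checking that the intercalate-to-$2$-cycle correspondence is a genuine bijection, so that no intercalate is counted twice within a row pair; this follows from the mutual-inverse check above. Everything else is elementary counting.
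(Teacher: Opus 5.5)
Your proof is correct and complete. The two rows $i,i'$ define a derangement $\sigma_{i,i'}$ of the symbols, and the intercalates on those rows are in bijection with its $2$-cycles. A derangement has at most $n/2$ such cycles for even $n$, and at most $(n-3)/2$ for odd $n$, since an odd-order derangement must contain a cycle of length at least $3$. Summing these counts over the $\binom{n}{2}$ row pairs gives exactly the stated bounds, and the small cases $n\le 3$ (empty sum or zero bound) come out correctly as well.

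The paper does not prove this statement at all. It imports the bound as Theorem~1 of \cite{katherine-wallis} and uses it only to derive the bounds on $\dim H_2$ in Theorem~\ref{theorem:h-2-latin-square-bounds}. Your argument makes that step self-contained at the cost of a few lines. It also makes the equality case transparent: for even $n$, equality holds exactly when every $\sigma_{i,i'}$ is a fixed-point-free involution, as in the Cayley table of $\mathbb{Z}_2^k$.
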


Theorem \ref{theorem:latin-square-h2} and Theorem \ref{theorem:interclates-bound} imply the following result.

\begin{theorem} \label{theorem:h-2-latin-square-bounds}
    Let $M$ be a Latin square of order $n \geq 5$ and $G$ the associated strongly regular graph. If $\FF$ is any field, then,
    \begin{enumerate}
        \item if $n$ is even,
        \begin{gather*}
            (n-1)^3 - \frac{n^2(n-1)}{4} \leq \dim(H_2(\clc{G}, \FF)) \leq (n-1)^3,
        \end{gather*}
        \item if $n$ is odd,
        \begin{gather*}
            (n-1)^3 - \frac{n(n-1)(n-3)}{4} \leq \dim(H_2(\clc{G}, \FF)) \leq (n-1)^3.
        \end{gather*}
    \end{enumerate}
    In particular,
    \begin{gather*}
        H_2(\clc{G}, \FF) \not = 0, \text{ and, } \dim(H_2(\clc{G}, \FF) = \Theta(n^3).
    \end{gather*}
\end{theorem}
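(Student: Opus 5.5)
The plan is to combine Theorem \ref{theorem:latin-square-h2} with the intercalate bounds of Theorem \ref{theorem:interclates-bound}. By Theorem \ref{theorem:latin-square-h2}, for any field $\FF$ and $n\ge 5$ (so in particular $n \geq 4$),
\begin{gather*}
\dim(H_2(\clc{G},\FF)) = (n-1)^3 - I(M).
\end{gather*}
Since $I(M)\ge 0$, the upper bound $(n-1)^3$ is immediate. The lower bounds follow by substituting the bound on $I(M)$ from Theorem \ref{theorem:interclates-bound}: $\frac{n^2(n-1)}{4}$ when $n$ is even, and $\frac{n(n-1)(n-3)}{4}$ when $n$ is odd.

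For the nonvanishing and the $\Theta(n^3)$ claim, I will show that each lower bound is positive and of order $n^3$. In the even case,
\begin{gather*}
(n-1)^3-\frac{n^2(n-1)}{4} = (n-1)\left((n-1)^2-\frac{n^2}{4}\right) = (n-1)\left(n-1-\frac{n}{2}\right)\left(n-1+\frac{n}{2}\right).
\end{gather*}
This is positive exactly when $n>2$, and it is asymptotic to $\frac{3}{4}n^3$. In the odd case, $(n-1)^3-\frac{n(n-1)(n-3)}{4}=(n-1)\left((n-1)^2-\frac{n(n-3)}{4}\right)$. Because $(n-1)^2 > n(n-3)$ for all $n$, this quantity is positive, and it is also asymptotic to $\frac34 n^3$. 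Combined with the upper bound $(n-1)^3=O(n^3)$, this gives $\dim H_2 = \Theta(n^3)$ and $H_2\neq 0$.

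No step is a real obstacle. The only care needed is the elementary factorization and sign check for the lower bounds. The hypothesis $n\ge 5$ is stronger than the $n\ge4$ required by Theorem \ref{theorem:latin-square-h2}, so that theorem applies.
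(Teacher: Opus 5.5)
Your proposal is correct and is the paper's own argument: substitute the bounds of Theorem \ref{theorem:interclates-bound} into $\dim H_2(\clc{G},\FF)=(n-1)^3-I(M)$ from Theorem \ref{theorem:latin-square-h2}, then check that the lower bounds are positive and of order $n^3$. The paper expands them as $\frac{3n^3-11n^2+12n-4}{4}$ and $\frac{3n^3-8n^2+9n-4}{4}$, where you factor them. One wording fix in the odd case: the inequality you need is $4(n-1)^2>n(n-3)$, i.e.\ $3n^2-5n+4>0$, which holds for every $n$; by contrast, $(n-1)^2>n(n-3)$ is equivalent to $n>-1$ and gives your conclusion only once you also note $n(n-3)\ge 0$ for $n\ge 3$.
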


\begin{proof}
    The proofs of the first two statements follow by applying the bounds in Theorem \ref{theorem:interclates-bound} to the expression for $\dim(H_2)$ described in Theorem \ref{theorem:latin-square-h2}. The last two statements follow by further observing that for $n \geq 5,$
    \begin{gather*}
        0 < (n-1)^3 - \frac{n^2(n-1)}{4} = \frac{3n^3-11n^2+12n-4}{4}, \text{ and,}\\
        0 < (n-1)^3 - \frac{n(n-1)(n-3)}{4} = \frac{3n^3-8n^2+9n-4}{4}.
    \end{gather*}
\end{proof}

\subsection{Higher Homologies of Latin Square Graphs}
\label{subsec:higher-homologies-of-Latin-square-graphs}

Let $M$ be a Latin square of order $n \geq 5$ and $G$ the associated strongly regular graph. Let $\FF$ be an arbitrary field. For $t \geq 5,$ if $S$ is a set of $t$ vertices in $G$ forming a $K_t,$  then $t \leq n$, and exactly one of the following statements is true:
\begin{itemize}
    \item all vertices in $S$ have the same first coordinates,
    \item all vertices in $S$ have the same second coordinates,
    \item the vertices in $S$ have pairwise distinct first coordinates and pairwise distinct second coordinates, and $M[v]$ is the same for all $v \in S.$
\end{itemize}
Thus, for $i=3,\dots,n-2,$ $\delta_{i,\FF}^T(G)$ may be identified as $\bigoplus_{j=1}^{3n} A_j,$ where each $A_j$ is the matrix $\delta_{i,\FF}^T(K_n).$ It is well known that $H_j(\clc{K_n}, \FF)=0$ for each $0 < j.$ (This can also be seen directly by applying a rank-nullity argument to Theorem 8 in \cite{meshulam-newman-rabinovich}.) Thus, we conclude that:

\begin{prop} \label{prop:higher-homology-latin-square}
    Let $M$ be a Latin square of order $n \geq 5$ and $G$ the associated strongly regular graph. Let $\FF$ be an arbitrary field. Then, for $i \geq 3,$ we have:
    \begin{gather*}
        \dim(H_i(\clc{G}, \FF))=0.
    \end{gather*}
\end{prop}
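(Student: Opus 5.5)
The plan is to compute $H_i(\clc{G},\FF)=\kdt{i-1}{G}/\idt{i}{G}$ for $i\ge 3$ directly, using the block decomposition of the boundary maps described just before the statement. The key structural fact is that for $n\ge 5$ every clique on $t\ge 5$ vertices lies in a unique ``line'': a row, a column, or a symbol class. Each of these $3n$ lines spans a copy of $K_n$ in $G$. The one exception is at dimension $3$, where some $4$-cliques are intercalates. I will therefore treat $i\ge 4$ and $i=3$ separately.

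\emph{Case $i\ge 4$.} The relevant simplices have dimensions $i-1\ge 3$, $i$, and $i+1$, so they are cliques on at least $4$ vertices for the $(i-1)$-faces and at least $5$ vertices for the $i$-faces. Each $(i-1)$-face, even a $4$-clique, belongs to exactly one family: a row, a column, a symbol class, or an intercalate. An intercalate $4$-clique is never a face of a $5$-clique, because every $5$-clique lies in a single line. Hence $\delta_{i-1}^T$ and $\delta_i^T$ split compatibly as direct sums over the $3n$ line-blocks $\clc{K_n}$, plus a summand consisting of the isolated intercalate $3$-simplices. Since those intercalate $3$-simplices are not faces of anything higher, they only enter $\delta_{3}^T$, which is the map relevant to $i=3$ and $i=4$ through its kernel.

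Concretely, for $i\ge4$ we have $\ker\delta_{i-1}^T=\bigoplus_{j}\ker\delta_{i-1}^T(K_n)\oplus(\text{intercalate part})$. The intercalate part of $\ker\delta_{3}^T$ is zero, because distinct intercalates share no triangles, so the columns of the intercalate block have disjoint nonzero supports. Thus $H_i(\clc{G},\FF)\cong\bigoplus_{j=1}^{3n}H_i(\clc{K_n},\FF)=0$, using the acyclicity of the simplex $\clc{K_n}$ over any field.

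\emph{Case $i=3$.} This is the main obstacle. Here $\ker\delta_2^T$ lives on $3$-simplices, including the intercalates, and $\idt{3}{G}$ comes only from $5$-cliques inside lines. I would decompose $\delta_2^T=A_1\oplus A_2\oplus A_3\oplus A_4$ exactly as in the proof of Proposition~\ref{prop:dim-im-d-2t}. Each row, column, and symbol $K_4$ contributes triangles lying in that line. Triangles from distinct families are distinct, and a triangle inside a line lies in only one line. So $\ker\delta_2^T=\bigoplus_j\ker\delta_2^T(K_n)\oplus\ker A_4$, and $\ker A_4=0$ since the columns of $A_4$ have disjoint supports. Each $\ker\delta_2^T(K_n)$ equals $\im\delta_3^T(K_n)$ by acyclicity of $K_n$, and the $\delta_3^T$ blocks match up line by line. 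This gives $H_3=0$.

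As a sanity check, I would verify the dimension count $\dim\ker\delta_2^T=3n\binom{n-1}{4}=\dim\idt{3}{G}$ using $\dim\idt{3}{K_n}=\binom{n-1}{4}$. The only delicate point is confirming that no $4$-clique lies in two families, which the preceding classification already gives.
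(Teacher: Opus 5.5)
Your proposal is correct and follows the paper's route. You split the boundary maps into $3n$ copies of those of $\clc{K_n}$ and use acyclicity of the simplex; your treatment of $i=3$, via $\delta_2^T=A_1\oplus A_2\oplus A_3\oplus A_4$ with $\ker A_4=0$, spells out a step the paper's short proof leaves implicit. One small indexing slip: for $i\ge 4$, $\ker\delta_{i-1}^T$ lives on $i$-simplices, which have at least $5$ vertices, so it has no intercalate part at all, and your disjoint-triangle-support remark belongs to $\ker\delta_2^T$ in the $i=3$ case, not to $\ker\delta_3^T$.
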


\subsection{Summary of Homology Computations}
In this subsection, we collect our results from the previous subsections and organize them into one theorem.

\begin{theorem} \label{theorem:homology-latin-square}
    Let $M$ be a Latin square of order $n \geq 5$ and $G$ the associated strongly regular graph. Let $\FF$ be an arbitrary field. Denote by $I(M)$ the number of intercalates in $M$. Then, for $i > 0$,
    \begin{gather*}
        \dim(H_i(\clc{G}, \FF)) = \begin{cases}
            (n-1)^3 - I(M), & \text{if } i = 2\\
            0, & i=1 \text{ or } i \geq 3.
        \end{cases}
    \end{gather*}
\end{theorem}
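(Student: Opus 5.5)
This theorem only collects what has already been proved, so the plan is to assemble three earlier results, one for each range of $i$, and check that their hypotheses hold for $n \geq 5$.

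For $i=1$ we invoke Theorem \ref{vanishing-latin-h1}. It applies to any Latin square of order $n\geq 4$ and any field $\FF$. It shows that every cycle $C$ has $T(C)\in \idt{1}{G}$. Since $\kdt{0}{G}$ is spanned by such $T(C)$ (Corollary \ref{cor:TCinduced}), we get $\kdt{0}{G}=\idt{1}{G}$, and hence $\dim H_1(\clc{G},\FF)=0$.

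For $i=2$ we invoke Theorem \ref{theorem:latin-square-h2}, again valid for $n\geq 4$. Its proof combines three ingredients:
\begin{itemize}
\item the rank--nullity computation \eqref{eq:eq-3} for $\dkdt{1,\FF}{G}$, which uses the triangle count $n^3(n-1)/2$ and the value of $\didt{1,\FF}{G}$ supplied by the $H_1$ vanishing;
\item Proposition \ref{prop:dim-im-d-2t}, which gives $\didt{2,\FF}{G}=3n\binom{n-1}{3}+I(M)$;
\item the algebraic simplification $\frac{n^2(n^2-4n+5)}{2}-1-3n\binom{n-1}{3}-I(M)=(n-1)^3-I(M)$.
\end{itemize}
If we re-verify anything, it would be this last identity. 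Expanding $3n\binom{n-1}{3}=\frac{n(n-1)(n-2)(n-3)}{2}$ and subtracting it from $\frac{n^4-4n^3+5n^2-2}{2}$ should give $\frac{2n^3-6n^2+6n-2}{2}=(n-1)^3$.

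For $i\geq 3$ we invoke Proposition \ref{prop:higher-homology-latin-square}, which needs $n\geq 5$; this is why the summary theorem assumes $n\geq 5$. The main point to check is the boundary between $i=3$ and $i\geq 4$. We have $H_3=\kdt{2}{G}/\idt{3}{G}$, and the kernel of $\delta_2^T$ involves the intercalate $K_4$'s. The columns coming from $S_4$ are supported on disjoint sets of triangles, so each such column is nonzero and independent of the others. Hence these blocks contribute nothing to $\kdt{2}{G}$.

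Moreover, by $n\geq 5$ no intercalate extends to a $K_5$. So both $\delta_2^T$ and $\delta_3^T$ split as direct sums: $3n$ copies of the corresponding $K_n$ boundary maps, plus an injective block for the intercalates. Then $H_3$ is the direct sum of $3n$ copies of $H_3(\clc{K_n},\FF)=0$. For $i\geq 4$ the same block decomposition holds with only the $3n$ copies of $K_n$, and $H_i(\clc{K_n},\FF)=0$. For $i\geq n$ all chain groups vanish.

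Putting the three cases together gives the displayed formula for every $i>0$. The only non-citational step we expect to need care with is the $i=3$ block bookkeeping described above.
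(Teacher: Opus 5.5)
Your proposal is correct and matches the paper's proof, which simply cites Theorem~\ref{vanishing-latin-h1} for $i=1$, Theorem~\ref{theorem:latin-square-h2} for $i=2$, and Proposition~\ref{prop:higher-homology-latin-square} for $i\geq 3$. Your extra checks are accurate: the identity reducing to $(n-1)^3-I(M)$, and the block bookkeeping at $i=3$, where the intercalate columns of $\delta_2^T$ are independent and the intercalates give zero rows of $\delta_3^T$; one small quibble is that no intercalate extends to a $K_5$ for every $n$, not because $n\geq 5$.
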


\begin{proof}
     By Theorem \ref{vanishing-latin-h1}, we have $\hlcl{G}{\FF}=0.$ The dimension of $H_2$ is given by Theorem \ref{theorem:latin-square-h2}. Finally, we have $\dim(H_i(\clc{G}, \FF))=0$ for $i \geq 3$ by Proposition \ref{prop:higher-homology-latin-square}. This proves the theorem in all cases.
\end{proof}

\subsection{Spectral and Matrix Theoretic Connections}

In this section, we relate the results from the previous sections with the spectrum of upper and lower Laplacian matrices of the clique complex of a Latin square graph. In the following, for an arbitrary real symmetric matrix $A,$ we denote by $\lambda_{\min}(A)$ the smallest eigenvalue of $A.$ Also, by $L_i(G)$ we will denote the $i^{th}$ total Laplacian of the clique complex of $G$.

\begin{prop}
    Let $M$ be a Latin square of order $n \geq 5$ and $G$ the associated strongly regular graph. Then, all nonzero invariant factors of $\delta_{2, \Z}(\clc{G})$ are $1$'s. That is, if $G$ has $s$ triangles and $t$ copies of $K_4,$ then there are integer matrices $S,T,$ and $D$ such that:
    \begin{itemize}
        \item $T, S, D$ have sizes $t \times t, s \times s,$ and $t \times s,$ respectively,
        \item $\det(S)=\det(T)=1,$
        \item $D$ is a diagonal matrix with all nonzero entries $1$ and all nonzero entries appear consecutively on the diagonal starting at position $(1,1)$, and,
        \item $\delta_{2, \Z}(\clc{G}) = TDS.$
    \end{itemize}
\end{prop}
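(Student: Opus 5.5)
The plan is to show that the cokernel of $\delta_{2,\Z}^T$ restricted to its image (equivalently, the torsion of $\Z^{X_2}/\im\delta_{2,\Z}$) vanishes, using the block decomposition already set up for Proposition \ref{prop:dim-im-d-2t}. Indeed, all nonzero invariant factors of an integer matrix $A$ equal $1$ precisely when $\rankg{\FF_p}{A} = \rankg{\Q}{A}$ for every prime $p$. Equivalently, the Smith normal form has only units among its nonzero entries, and this is detected by reduction modulo every prime. So I first reduce the statement to a field-independence statement for ranks. Once the ranks agree, the Smith normal form theorem supplies unimodular $T,S$ with $\delta_{2,\Z}=TDS$. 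Replacing $T$ by $T$ times a diagonal $\pm1$ matrix if needed makes $\det T=\det S=1$ without changing $D$, which has zero rows or columns available whenever a sign must be absorbed. The rank condition is exactly what was proved in Proposition \ref{prop:dim-im-d-2t}: $\didt{2,\FF}{G}=3n\binom{n-1}{3}+I(M)$ for every field $\FF$, in particular for $\Q$ and every $\FF_p$. Since $\rankg{}{\delta_2}=\rankg{}{\delta_2^T}$, the ranks of $\delta_{2,\Z}$ over $\Q$ and over every $\FF_p$ coincide.

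To make this self-contained I will spell out the standard lemma. Let $d_1\mid d_2\mid\cdots\mid d_r$ be the nonzero invariant factors of $\delta_{2,\Z}$, so $r=\rankg{Q}{\delta_{2,\Z}}$. Reducing $\delta_{2,\Z}=T'D'S'$ modulo $p$ keeps $T',S'$ invertible over $\FF_p$. Hence $\rankg{}{\delta_{2,\FF_p}}$ equals the number of $d_i$ not divisible by $p$. Equality with $r$ for all $p$ forces every $d_i$ to have no prime divisor, i.e.\ $d_i=1$. Here I also need the observation, already used in Corollary \ref{corollary:rectangular-4-cycle-field}, that $\delta_{2,\FF_p}$ is the reduction of $\delta_{2,\Z}$ modulo $p$. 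This holds because the entries are the signs $[F:K]\in\{0,\pm1\}$.

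The main point to double-check is the field-independence in Proposition \ref{prop:dim-im-d-2t}. It rests on three facts: the block decomposition $A_1\oplus A_2\oplus A_3\oplus A_4$; the rank of $\delta_2^T(K_n)$ being $\binom{n-1}{3}$ over every field (Theorem 8 in \cite{meshulam-newman-rabinovich}); and $A_4$ having disjointly supported nonzero columns. For the last, each column has $\pm1$ entries, so it is nonzero mod $p$ and the rank is $I(M)$ over every field.

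Alternatively, one could argue blockwise: each $K_n$ block has trivial integral homology, hence trivial torsion. The $A_4$ block is, after permutations, a direct sum of $4\times1$ columns with $\pm1$ entries, whose Smith form is visibly all $1$'s. The direct sum of matrices with unit invariant factors again has unit invariant factors. I would present the rank-comparison argument as the main proof and mention the blockwise one as a check.
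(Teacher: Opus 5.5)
Your proposal is correct and is essentially the paper's proof: Proposition~\ref{prop:dim-im-d-2t} shows the rank of $\delta_2$ is the same over $\Q$ and over every $\FF_p$, and reducing a Smith normal form modulo a prime that divides some nonzero invariant factor would make the $\FF_p$-rank drop. Your sign adjustment to get $\det T=\det S=1$ covers a point the paper glosses over, and it is legitimate here because the rank $3n\binom{n-1}{3}+I(M)$ is smaller than both the number of $K_4$'s and the number of triangles, so $D$ has zero rows and zero columns; one small correction is that the cokernel in your first line should read $\Z^{X_3}/\im\delta_{2,\Z}$ (or $\Z^{X_2}/\im\delta_{2,\Z}^T$).
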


\begin{proof}
    By Proposition \ref{prop:dim-im-d-2t}, we have that over any field $\FF,$
    \begin{gather*}
        \didt{2,\FF}{\clc{G}} = 3n\binom{n-1}{3} + I(M).
    \end{gather*}
    Therefore, 
    \begin{gather*}
        \rankg{\FF}{\delta_{2,\FF}(\clc{G})} = \rankg{\FF}{\delta_{2,\FF}^T(\clc{G})} = \didt{2,\FF}{(\clc{G})} = 3n\binom{n-1}{3} + I(M).
    \end{gather*}
    In particular, if the $\re$-rank of $\delta_{2,\Z}(\clc{G})$ is $l$ and $p$ is any prime, then,
    \begin{gather*}
        \rankg{\FF_p}{\delta_{2,\FF_p}^T(\clc{G})} = l.
    \end{gather*}
    Since $\delta_{2,\Z}(\clc{G})$ is an integer matrix, we may consider its Smith normal form which yields integer matrices $T,D,S$ of the sizes mentioned above such that $\det(S)=\det(T)=1,$ $\delta_{2,\Z}(\clc{G})=TDS,$ and the nonzero invariant factors $d_1 \mid d_2 \mid \dots \mid d_l$ (where $l$ is the real rank of $\delta_{2,\Z}(\clc{G})$) appearing consecutively at the top-left corner of the upper diagonal matrix $D.$ To complete the proof of the proposition, it suffices to show that all nonzero entries in $D$ are $1$'s. Assume on the contrary that $D$ has a nonzero diagonal entry $d$ which is divisible by some prime $p$. Now, $\det(S)=\det(T)=1,$ which is invertible over $\FF_p.$ Consequently, both $S$ and $T$ are invertible over $\FF_p.$ Thus,
    \begin{gather*}
        \rankg{\FF_p}{\delta_{2,\FF_p}(\clc{G})}  = \rankg{\FF_p}{TDS} = \rankg{\FF_p}{D} < l,
    \end{gather*}
    which is a contradiction. Thus, no nonzero invariant factors of $\delta_{2,\Z}(\clc{G})$ has any prime divisor, i.e., all nonzero invariant factors of $\delta_{2,\Z}(\clc{G})$ must be $1'$s.
\end{proof}

If $G$ is a graph, we define the $i^{th}$ total Laplacian $L_i$ of $G$ as $L_i=\delta_{i-1,G,\re}\delta_{i-1,G,\re}^T+\delta_{i,G,\re}^T\delta_{i,G,\re}.$

\begin{theorem}
    Let $M$ be a Latin square of order $n \geq 5$ and $G$ the associated strongly regular graph. Then,
    \begin{enumerate}
        \item $\lambda_{\min}(L_i) > 0$ if $i=1$ or $i \geq 3,$ and,
        \item $\lambda_{\min}(L_2) = 0.$
    \end{enumerate}
\end{theorem}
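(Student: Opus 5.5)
The plan is to use the combinatorial Hodge theorem over $\re$: for each $i$, $\ker(L_i)$ is isomorphic to $H_i(\clc{G},\re)$. Since $L_i$ is a sum of two positive semidefinite matrices, $\delta_{i-1}\delta_{i-1}^T$ and $\delta_i^T\delta_i$, it is itself positive semidefinite, so $\lambda_{\min}(L_i)\ge 0$. Equality holds exactly when $\ker L_i\neq 0$. Thus the statement reduces to deciding when $H_i(\clc{G},\re)$ vanishes, and Theorem \ref{theorem:homology-latin-square} with $\FF=\re$ answers this.

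The first step is to verify $\ker L_i=\ker\delta_{i-1}^T\cap\ker\delta_i$ in the indexing used here. Here $\delta_{i-1}^T$ maps $i$-chains to $(i-1)$-chains, and $\delta_i$ maps $i$-cochains to $(i+1)$-cochains. If $L_ix=0$, then
\[
0=x^TL_ix=\|\delta_{i-1}^Tx\|^2+\|\delta_ix\|^2,
\]
so both terms vanish. The converse inclusion is immediate. Over $\re$ we have $\ker\delta_i=(\im\delta_i^T)^{\perp}$, and $\im\delta_i^T\subseteq\ker\delta_{i-1}^T$. Hence $\ker L_i$ is the orthogonal complement of $\im\delta_i^T$ inside $\ker\delta_{i-1}^T$. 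Its dimension is therefore
\[
\dim\ker\delta_{i-1}^T-\dim\im\delta_i^T=\dim H_i(\clc{G},\re).
\]

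The second step applies Theorem \ref{theorem:homology-latin-square}. For $i=1$ and for $i\ge 3$ (in the range where $L_i$ is defined), $H_i(\clc{G},\re)=0$. So $\ker L_i=0$, and $\lambda_{\min}(L_i)>0$ because $L_i$ is positive semidefinite with trivial kernel. For $i=2$, Theorem \ref{theorem:h-2-latin-square-bounds} gives $\dim H_2(\clc{G},\re)=(n-1)^3-I(M)>0$ when $n\ge5$. Hence $\ker L_2\ne0$ and $\lambda_{\min}(L_2)=0$.

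The main obstacle is bookkeeping rather than mathematics, and it comes in two parts. The first is checking that the paper's convention for $L_i$ matches the Hodge decomposition at level $i$, with $\delta_{i-1}$ the coboundary from $(i-1)$-cochains to $i$-cochains. The second is treating the top dimension: for $i=\dim\clc{G}=n-1$, $\delta_i$ is absent, so the term $\delta_i^T\delta_i$ should be read as zero. The homology vanishing there comes from the block decomposition into copies of $K_n$ in Proposition \ref{prop:higher-homology-latin-square}, whose $(n-1)$-st homology is zero. With these conventions fixed, the rest is the identification above.
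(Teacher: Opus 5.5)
Your proposal is correct and follows the paper's proof. Positive semidefiniteness of $L_i$ together with $\ker L_i\cong H_i(\clc{G},\re)$ reduces everything to Theorem~\ref{theorem:homology-latin-square} for $i=1$ and $i\geq 3$, and to Theorem~\ref{theorem:h-2-latin-square-bounds} for $i=2$. The only difference is that you derive $\ker L_i=\ker\delta_{i-1}^T\cap(\im\delta_i^T)^{\perp}$ directly instead of citing the Hodge-theoretic fact, which is a self-contained expansion of the same step.
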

\begin{proof}
    For $i \geq 1,$ $L_i$ is positive semidefinite. It is known (see \cite[Theorem 2.2]{HORAK} or \cite[Section 4.3]{Lim}), that, for any $i$:
    \begin{gather*}
        H_i \cong H^i = 0 \iff \ker(L_i) = 0 \iff \lambda_{\min}(L_i) > 0.
    \end{gather*}
    By Theorem \ref{theorem:homology-latin-square}, if $i=1$ or $i \geq 3$, then, $H_i(\clc{G},\re)=0,$ and so, $\lambda_{\min}(L_i) > 0.$ On the other hand, by Theorem, \ref{theorem:h-2-latin-square-bounds}, $H_2(\clc{G}, \re) \not = 0,$ and thus, $\lambda_{\min}(L_2)=0.$
\end{proof}
We also observe that the number of intercalates in $M$ is determined by the spectra of $L_2^{\uparrow}.$

\begin{theorem}
Let $M$ be a Latin square of order $n \geq 5$. If $G$ is the associated strongly regular graph, then the spectrum of $L_2^{\uparrow}(\clc{G})$ is:
    \begin{gather*}
        \sigma(L_2^{\uparrow}(\clc{G})) = \begin{pmatrix}
        0 & 4 & n \\
        3n \binom{n-1}{2} + 3I(M) & I(M) & 3n \binom{n-1}{3}
    \end{pmatrix},
    \end{gather*}
where the first row contains the distinct eigenvalues and the second row lists their respective multiplicities.
    Consequently,
    \begin{gather*}
        \tr{L_2^{\uparrow}(\clc{G})} = 3n^2 \binom{n-1}{3} + 4 I(M),
    \end{gather*}
    and, the number of intercalates in $M$ is given by:
    \begin{gather*}
        I(M) = \frac{1}{4} \left(\tr{L_2^{\uparrow}(\clc{G})}- 3n^2 \binom{n-1}{3}\right). 
    \end{gather*}
\end{theorem}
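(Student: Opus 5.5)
The plan is to exploit the block structure of $\delta_2$ used in the proof of Proposition \ref{prop:dim-im-d-2t}. Since $L_2^{\uparrow}(\clc{G})=\delta_{2,\re}^T\delta_{2,\re}$ is indexed by triangles, its nonzero eigenvalues, with multiplicities, coincide with those of $\delta_{2,\re}\delta_{2,\re}^T$, which is indexed by copies of $K_4$. So I would compute the spectrum of the latter Gram matrix and then read off the zero eigenvalue of $L_2^{\uparrow}$ by counting dimensions.

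\textbf{Step 1 (block decomposition).} Partition the triangles of $G$ into four classes:
\begin{itemize}
\item triangles inside a single row clique;
\item triangles inside a single column clique;
\item triangles inside a single symbol clique;
\item ``mixed'' triangles, which use one horizontal, one vertical and one diagonal edge.
\end{itemize}
By the classification of $K_4$'s recalled before Proposition \ref{prop:dim-im-d-2t}, every $K_4$ in $S_1,S_2,S_3$ has all four faces in one of the first three classes, and every $K_4$ in $S_4$ (an intercalate) has all four faces mixed. Hence $L_2^{\uparrow}$ is block diagonal with the following blocks:
\begin{itemize}
\item $3n$ blocks, each equal to $L_2^{\uparrow}(\clc{K_n})$;
\item one block on the mixed triangles, equal to $A_4A_4^T$.
\end{itemize}

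\textbf{Step 2 (the $K_n$ blocks).} On the $2$-chains of $\clc{K_n}$, the full simplex satisfies the classical identity $L_2^{\uparrow}+L_2^{\downarrow}=nI$. Together with $H_2(\clc{K_n},\re)=0$, this shows that the nonzero eigenvalues of $L_2^{\uparrow}(\clc{K_n})$ all equal $n$. Their multiplicity is $\rnkg{\re}{\delta_2(K_n)}=\binom{n-1}{3}$, and the remaining $\binom{n}{3}-\binom{n-1}{3}=\binom{n-1}{2}$ eigenvalues are $0$. Over the $3n$ blocks this gives eigenvalue $n$ with multiplicity $3n\binom{n-1}{3}$ and contributes $3n\binom{n-1}{2}$ zeros.

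\textbf{Step 3 (the intercalate block).} The columns of $A_4$ have pairwise disjoint supports, and each column has four entries equal to $\pm1$. Hence $A_4^TA_4=4I_{I(M)}$, so the nonzero spectrum of $A_4A_4^T$ is $4$ with multiplicity $I(M)$. The remaining eigenvalues of this block are zeros, numbering (number of mixed triangles) $-\,I(M)$.

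\textbf{Step 4 (count and trace).} Summing the zeros from Steps 2 and 3 yields the multiplicity of $0$. The trace formula then follows from $\tr{L_2^{\uparrow}}=n\cdot 3n\binom{n-1}{3}+4I(M)$, and solving this for $I(M)$ gives the stated expression.

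The main obstacle is Step 4, the bookkeeping of the mixed triangles. A direct count gives $n^3(n-1)/2-3n\binom{n}{3}=n^2(n-1)$ mixed triangles, of which $4I(M)$ lie in intercalates. This would make the zero multiplicity $3n\binom{n-1}{2}+n^2(n-1)-I(M)$, whereas the statement has $3n\binom{n-1}{2}+3I(M)$. I would therefore recheck carefully whether mixed triangles not contained in any intercalate are meant to be included, since the stated multiplicities sum to $3n\binom{n}{3}+4I(M)$ rather than to the total number of triangles. The nonzero part of the spectrum, and hence the trace and the formula for $I(M)$, is unaffected by this discrepancy.
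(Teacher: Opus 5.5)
Your approach is the paper's. Both split the triangles by the four types of $K_4$, which makes $L_2^{\uparrow}$ block diagonal with $3n$ copies of $L_2^{\uparrow}(\mathrm{Cl}(K_n))$ plus an intercalate part. The differences are only in the ingredients:
\begin{itemize}
\item For the $K_n$ blocks, the paper quotes the spectrum of $L_2^{\uparrow}(\mathrm{Cl}(K_m))$. You derive it from $L_2^{\uparrow}+L_2^{\downarrow}=nI$. The fact you actually need there is $L_2^{\downarrow}L_2^{\uparrow}=0$, not $H_2=0$.
\item For the intercalate part, the paper uses $I(M)$ copies of $L_2^{\uparrow}(\mathrm{Cl}(K_4))$, while you use $A_4^TA_4=4I$.
\end{itemize}
Your derivation of the eigenvalues $n$ and $4$ with multiplicities $3n\binom{n-1}{3}$ and $I(M)$ is correct. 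So are the trace identity and the formula for $I(M)$.

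The discrepancy you found in Step 4 is genuine, and you should assert it rather than hedge. The paper's proof takes the intercalate block $B$ of $\delta_2$ to be $I(M)\times 4I(M)$, so it keeps only triangles that lie in some $K_4$. The $n^2(n-1)-4I(M)$ mixed triangles lying in no intercalate give zero columns of $\delta_2$, hence zero rows and columns of $L_2^{\uparrow}$, and the paper silently drops them.

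Your counts are right:
\begin{itemize}
\item There are $n^2(n-1)$ mixed triangles: one for each corner cell and second column in its row.
\item The total number of triangles is $3n\binom{n}{3}+n^2(n-1)=n^3(n-1)/2$.
\item Each mixed triangle lies in at most one $K_4$, since its fourth vertex is forced.
\end{itemize}
Hence the multiplicity of $0$ is $3n\binom{n-1}{2}+n^2(n-1)-I(M)$.

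The stated value $3n\binom{n-1}{2}+3I(M)$ is correct only for the principal submatrix of $L_2^{\uparrow}$ on triangles contained in some $K_4$. For the full matrix it holds only when $4I(M)=n^2(n-1)$, which never happens for odd $n$, since then $I(M)\le n(n-1)(n-3)/4$. For a concrete counterexample, take the addition table of $\mathbb{Z}_5$: it has no intercalates and $250$ triangles, yet the stated multiplicities sum to $150$. Zero rows and columns affect neither the nonzero spectrum nor the trace, so the trace identity and the formula for $I(M)$ stand as stated.
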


\begin{proof}
    All quantities below are over the field $\re$. As discussed in Section \ref{section:h-2-latin-squares}, $\delta_2(\clc{G})$ has the following block form:
    \begin{gather*}
        \delta_2(\clc{G}) \cong \left(\bigoplus_{j=1}^{3n} A_j\right) \oplus B,
    \end{gather*}
    where each $A_j$ may be identified with $\delta_2(\clc{K_n}),$ and $B$ is the $I(M) \times 4 I(M)$ matrix, with its rows and columns indexed by the intercalates in $M$, and the triangles in $G$ whose vertices are contained in these intercalates, respectively. Thus, the block diagonal matrix $L_2^{\uparrow} = \delta_2(\clc{G})^T \delta_2(\clc{G})$ has the form:
    \begin{gather*}
        L_2^{\uparrow}(\clc{G}) = \delta_2(\clc{G})^T \delta_2(\clc{G}) = \left( \bigoplus_{j=1}^{3n} A_j^TA_j \right) \oplus B^TB = \left( \bigoplus_{j=1}^{3n} L_2^{\uparrow}(\clc{K_n}) \right) \oplus B^TB.
    \end{gather*}
    Furthermore, since the copies of $K_4$ corresponding to the intercalates in $M$ share no triangles, we have:
    \begin{gather*}
        B = \bigoplus_{j=1}^{I(M)} \delta_2(\clc{K_4}),
    \end{gather*}
    and thus,
    \begin{gather*}
        B^TB = \bigoplus_{j=1}^{I(M)} L_2^{\uparrow}(\clc{K_4}).
    \end{gather*}
    From \cite[Example 2.3]{BGP}, we know that, for any $m$,
    \begin{gather*}
        \sigma(L_2^{\uparrow}(\clc{K_m})) = \begin{pmatrix}
            0 & m\\
            \binom{m-1}{2} & \binom{m-1}{3}.
        \end{pmatrix}
    \end{gather*}
    We obtain that
    \begin{gather*}
        \sigma(L_2^{\uparrow}(\clc{G})) = \sigma \left( \bigoplus_{j=1}^{3n} L_2^{\uparrow}(\clc{K_n}) \right) \bigcup \sigma(B^TB) = \sigma \left( \bigoplus_{j=1}^{3n} L_2^{\uparrow}(\clc{K_n}) \right) \bigcup \sigma\left(\bigoplus_{j=1}^{I(M)} L_2^{\uparrow}(\clc{K_4})\right)\\
        = \begin{pmatrix}
            0 & n \\
            3n \binom{n-1}{2} & 3n \binom{n-1}{3}
        \end{pmatrix}
        \bigcup
        \begin{pmatrix}
            0 & 4 \\
            3 I(M) & I(M)
        \end{pmatrix}
        = \begin{pmatrix}
            0 & 4 & n \\
            3n \binom{n-1}{2} + 3I(M) & I(M) & 3n \binom{n-1}{3}
        \end{pmatrix}.
    \end{gather*}
    This proves the first statement. We thus obtain:
    \begin{gather*}
        \tr{L_2^{\uparrow}(\clc{G})} = 3n^2 \binom{n-1}{3} + \tr{B^TB} = 3n^2 \binom{n-1}{3} + 4 I(M),
    \end{gather*}
    and so,
    \begin{gather*}
        I(M) = \frac{1}{4} \left(\tr{L_2^{\uparrow}(\clc{G})}- 3n^2 \binom{n-1}{3}\right).
    \end{gather*}
    This completes the proof.
\end{proof}
We also observe that for $i \geq 3$, the spectrum of $L_i^{\uparrow}$ is determined by $n$ and $I(M)$, and contains no further information regarding $M$ or $G.$
\begin{prop}
    Let $M$ be a Latin square of order $n \geq 5$ and $G$ the associated strongly regular graph. Then,
    \begin{gather*}
        \sigma(L_3^{\uparrow}) = \begin{pmatrix}
            0 & n \\
            3n \binom{n-1}{3}+I(M) & 3n \binom{n-1}{4}
        \end{pmatrix}.
    \end{gather*}
    For $4 \leq i \leq n-2,$ we have:
    \begin{gather*}
        \sigma(L_i^{\uparrow}) = \begin{pmatrix}
            0 & n \\
            3n \binom{n-1}{i} & 3n \binom{n-1}{i+1}
        \end{pmatrix}.
    \end{gather*}
\end{prop}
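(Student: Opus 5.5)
The plan is to exploit the block structure of $\delta_i$ for $i\ge 3$ that was set up in Subsection \ref{subsec:higher-homologies-of-Latin-square-graphs}, in the same way the previous theorem handled $L_2^{\uparrow}$. Over $\re$ we have $L_i^{\uparrow}=\delta_i^T\delta_i$, indexed by the $i$-simplices, i.e. the $(i+1)$-cliques. The shift convention is the one implicit in the previous theorem: $L_2^{\uparrow}=\delta_2^T\delta_2$ was indexed by triangles, and the $K_n$ eigenvalue $0$ had multiplicity $\binom{n-1}{2}$. With that convention, $L_i^{\uparrow}(\clc{K_n})$ has spectrum $0^{\binom{n-1}{i}}, n^{\binom{n-1}{i+1}}$ (from \cite[Example 2.3]{BGP}).

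\textbf{Step 1: the case $4\le i\le n-2$.} The only cliques of size $i+1\ge 5$ in $G$ are the cliques lying inside one of the $3n$ maximal cliques: the $n$ rows, the $n$ columns, and the $n$ symbol classes. Each of these is a copy of $K_n$, and two distinct ones meet in at most one vertex. Hence the $(i+1)$-cliques and the $(i+2)$-cliques are partitioned by these $3n$ copies, and no simplex is shared between blocks. Reordering rows and columns gives
\begin{gather*}
\delta_i(\clc{G})\cong\bigoplus_{j=1}^{3n}\delta_i(\clc{K_n}),\qquad L_i^{\uparrow}(\clc{G})\cong\bigoplus_{j=1}^{3n}L_i^{\uparrow}(\clc{K_n}).
\end{gather*}
As in the previous theorem, a reordering only introduces signed permutation matrices, so it does not change the spectrum. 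Multiplying the $K_n$ multiplicities by $3n$ gives the claimed spectrum.

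\textbf{Step 2: the case $i=3$.} Now the index set is the set of $K_4$'s, and these split into two kinds. The first kind are those inside one of the $3n$ maximal cliques (families $S_1,S_2,S_3$). The second kind are the $I(M)$ intercalates (family $S_4$). The $K_5$'s all lie in the $3n$ maximal cliques, since an intercalate $K_4$ has no common neighbour that completes it to a $K_5$: a fifth vertex would have to share a row, column or symbol with all four cells, which is impossible. So an intercalate $K_4$ is a face of no $5$-clique, and its column in $\delta_3$ is zero.

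Also, the blocks coming from distinct maximal cliques do not share any $K_4$ or $K_5$. Therefore
\begin{gather*}
L_3^{\uparrow}(\clc{G})\cong\Big(\bigoplus_{j=1}^{3n}L_3^{\uparrow}(\clc{K_n})\Big)\oplus 0_{I(M)}.
\end{gather*}
This gives eigenvalue $n$ with multiplicity $3n\binom{n-1}{4}$, and eigenvalue $0$ with multiplicity $3n\binom{n-1}{3}+I(M)$.

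\textbf{Main obstacle.} The delicate point is the clique-structure claim used in both steps. The first part is that every $K_5$, and every $K_4$ not coming from an intercalate, lies inside a single row, a single column or a single symbol class. The second part is that each intercalate $K_4$ is maximal. Both rest on the case analysis already listed before Proposition \ref{prop:higher-homology-latin-square}, and Step 2 additionally requires checking that an intercalate $K_4$ is not contained in a $K_5$.

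The check is this. Suppose $w$ is adjacent to all four cells $(x,y),(x,z),(t,y),(t,z)$ of an intercalate with symbols $a,b$. To be adjacent to all four by row, column or symbol, $w$ must lie in row $x$ or $t$, or in column $y$ or $z$, or carry symbol $a$ or $b$. Each of these options covers at most two of the four cells. Covering the remaining cells then forces $w$ to be one of the four cells themselves, which is a contradiction.
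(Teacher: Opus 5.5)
Your proposal is correct and follows essentially the same route as the paper. For $i\ge 4$ you decompose $L_i^{\uparrow}$ as $\bigoplus_{j=1}^{3n} L_i^{\uparrow}(\clc{K_n})$, using that every clique of size at least $5$ lies in a single row, column or symbol class, and for $i=3$ you add the $I(M)$ intercalate $K_4$'s as zero rows and columns of $L_3^{\uparrow}$; your explicit check that no intercalate extends to a $K_5$ spells out what the paper leaves implicit in that step.
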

\begin{proof}
    If $S$ is a clique in $G$ such that $|S| \geq 5,$ then exactly one of the following three statements is true:
    \begin{itemize}
        \item all vertices in $S$ have the same first coordinate,
        \item all vertices in $S$ have the same second coordinate,
        \item $M$ has the same symbol at every vertex belonging to $S.$
    \end{itemize}
    Therefore, if $i>3$ the number of $i$-simplices in $\clc{G}$ is $t,$ then $L_i^{\uparrow}(\clc{G})$ may be identified with 
    \begin{equation*}
    \bigoplus_{j=1}^{3n} L_i^{\uparrow}(\clc{K_n}).
    \end{equation*}
    
    Since each $L_i^{\uparrow}(\clc{K_n})$ has spectrum 
    $\begin{pmatrix}
            0 & n\\
            \binom{n-1}{i} & \binom{n-1}{i+1},
    \end{pmatrix}$
    by \cite{BGP}, the proposition follows for $i>3$. If $i=3,$ $L_i^{\uparrow}(\clc{G})$ contains an additional $I(M)$ zero rows and columns, which contributes $I(M)$ to the multiplicity of the $0$-eigenvalue, and the proposition follows.
\end{proof}

\begin{rk}
We note that if $G_1$ and $G_2$ are the strongly regular graphs associated with two Latin squares of the same order with the same number of intercalates, then $G_1$ and $G_2$ cannot be distinguished by their corresponding $\bigcup_{0 \leq i \leq n, i \not = 1} \sigma(L_i^{\uparrow}).$
\end{rk}

\section{\texorpdfstring{Graphs Associated with Orthogonal Arrays $\text{OA}(t,n)$}{}}
\label{sec:graphs-associated-with-orthogonal-arrays}

Let $t$ and $n$ be two natural numbers. An orthogonal array $\text{OA}(t,n)$ is a $t\times n^2$ matrix with entries from $[n]$ such that the $n^2$ ordered pairs defined by any two rows of the matrix are all distinct. If $L_1,\dots,L_t$ are Latin squares of the same order $n$, they are said to be mutually orthogonal if, for every $i \ne j,$ the $n^2$ pairs of values $(L_i[x,y],L_j[x,y])$, $(x,y) \in [n]\times [n]$ are all distinct. An orthogonal array $\oa{t}{n}, t \geq 3$ is equivalent to $t-2$ mutually orthogonal Latin squares of order $n$, see \cite[Thm. 10.4.1]{GR}. The graph associated to an orthogonal array $\text{OA}(t,n)$ has as vertices the $n^2$ columns of the array and two columns are adjacent if they agree in exactly one position. It is known that this graph is a strongly regular graph with parameters $(n^2, t(n-1), n-2+(t-1)(t-2),t(t-1))$, see \cite[Thm. 10.4.2]{GR}.

The main theorem of this section is the following.

\begin{theorem} \label{theorem:vanishing-h1-orthogonal-array}
Let $\FF$ be a field and $M$ be an $t \times n^2$ orthogonal array, where $t \geq 3$ and $n \geq 4$. If $G$ is the strongly regular graph associated with $M$,  then $\hlcl{G}{\FF}=0.$
\end{theorem}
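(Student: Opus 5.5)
The plan is to generalize the proof for Latin square graphs (Theorem \ref{vanishing-latin-h1}). The case $t=3$ is exactly a Latin square graph, so we may assume $t\geq 4$. View each vertex as a column $u=(u_1,\dots,u_t)\in[n]^t$. By the orthogonality property, a vertex is determined by any two of its coordinates, and for any indices $i\neq j$ and values $a,b$ there is a unique vertex $w$ with $w_i=a$ and $w_j=b$. Call an edge $\{u,w\}$ \emph{of type $r$} if $u_r=w_r$; since adjacent columns agree in exactly one position, each edge has exactly one type. Cycles using only edges of types $1$ and $2$ play the role of rectangular cycles. The graph $G_{12}$ on the same vertex set with only these edge types is the lattice graph $K_n\square K_n$, with rows and columns indexed by the first two coordinates. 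Every triangle of $G_{12}$ is a triangle of $G$. I will prove that $T_{\FF}(C)\in\idt{1,\FF}{G}$ for every cycle $C$ by induction on $\rho(C)$, the number of edges of $C$ of type $r\geq 3$.

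\textbf{Rectangular cycles.} I would repeat the proof of Proposition \ref{prop:rectangular-cycles} verbatim. That induction used only four ingredients: chords split a cycle (Lemma \ref{lemma:cycle-split-over-chord}); the row and column triangles; the existence of the corner vertex $(x_1,y_4)$ or $(x_4,y_2)$, which here exists because the first two coordinates determine a vertex; and the base case that an induced rectangular $4$-cycle bounds.

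For the base case, let $a=(x,y)$, $b=(x,z)$, $c=(t',z)$, $d=(t',y)$ in the first two coordinates. I would imitate Proposition \ref{prop:rectangular-4-cycle}. Take $b'$ in row $t'$ agreeing with $b$ in coordinate $3$, and $b''$ in column $y$ agreeing with $b$ in coordinate $3$. The six triangles listed there must still be triangles of $G$. Adjacency in $G$ requires agreeing in \emph{exactly} one coordinate, so I must check that no pair among these six vertices agrees in two coordinates; any such agreement would force the two vertices to be equal. I would check the needed edges $\{b',b''\}$ and $\{d,b'\}$ directly: for example, $b'$ and $b''$ agree in coordinate $3$ and differ in the first two because $b'\neq d$. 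The signed identity \eqref{eq:T-z(a,b,c,d)} is purely combinatorial, so it transfers unchanged, and so does the reordering argument. Alternatively, and more simply, $a,b,c,d$ are an induced $4$-cycle in $G$, and one can look for a common neighbor of all four and apply Lemma \ref{prop:wheel4}. Counting with $\mu=t(t-1)$ common neighbors of $a$ and $c$ might produce one. I would try the direct six-vertex construction first, since it is known to work for $t=3$.

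\textbf{Reducing $\rho$.} Let $e=\{v_1,v_2\}$ be an edge of type $r\geq3$. Then $v_1,v_2$ differ in coordinates $1$ and $2$, so the corner vertices $v=(x_1,y_2)$ and $(x_2,y_1)$ exist. The vertex $v$ agrees with $v_1$ in coordinate $1$ and with $v_2$ in coordinate $2$, so $\{v_1,v\}$ is of type $1$ and $\{v,v_2\}$ is of type $2$. Hence $(v_1,v_2,v)$ is a triangle, and the case analysis of Theorem \ref{vanishing-latin-h1} goes through word for word: Case (1), Case (2a) and Case (2b), using Proposition \ref{prop:TC} and Lemma \ref{lemma:two-cycle-join-vertex}. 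In each case $\rho$ strictly decreases.

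\textbf{Main obstacle.} I expect the main obstacle to be the base case: verifying that the auxiliary six-vertex configuration in Proposition \ref{prop:rectangular-4-cycle} still consists of genuine triangles of $G$ when $t\geq4$, because of the "exactly one coordinate" condition. This is also where the hypothesis $n\geq4$ may be used, namely to choose auxiliary vertices avoiding the forbidden rows, columns and symbols. Everything else is a formal transcription of the Latin square argument, and the conclusion $\kdt{0,\FF}{G}=\idt{1,\FF}{G}$ follows by Corollary \ref{cor:TCinduced}.
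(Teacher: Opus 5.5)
Your argument is correct, but it is longer than the paper's, because it re-runs the whole Latin-square machinery inside $G$. That means the rectangular induction of Proposition \ref{prop:rectangular-cycles} with a re-verified six-vertex base case, followed by the induction on $\rho$ with the three-case analysis of Theorem \ref{vanishing-latin-h1}.

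The paper instead observes that two distinct columns of an orthogonal array agree in at most one coordinate. Hence the Latin square graph $G_1=L(M_1)$ formed by the first three rows is a spanning subgraph of $G$, so $\clc{G_1}\subseteq\clc{G}$, and Theorem \ref{vanishing-latin-h1} applies to $G_1$ as a black box. For a cycle $C$ of $G$, the paper then adds the corner triangle $\{v_1,(x_1,y_2),v_2\}$ for every edge $\{v_1,v_2\}$ of type $\geq 3$, all at once. Each such triangle contains exactly one edge of type $\geq 3$, so signs can be chosen to cancel all of them. What remains is a $\gamma\in\kdt{0,\FF}{G}$ supported on horizontal and vertical edges, so $\gamma\in\kdt{0,\FF}{G_1}=\idt{1,\FF}{G_1}\subseteq\idt{1,\FF}{G}$.

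Working with chains rather than cycles removes the need to ask whether the corner vertex lies on $C$. The inclusion $L(M_1)\subseteq G$ also dissolves what you call your main obstacle: every triangle of $L(M_1)$, including the six in the base configuration, is automatically a triangle of $G$, and $n\geq 4$ plays no role there. Your version has the merit of being self-contained. It also makes explicit that only the lattice structure of coordinates $1,2$, plus coordinate-$3$ triangles in the base case, is ever used.

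One caution for the transcription. In the chord step of Proposition \ref{prop:rectangular-cycles}, use only chords of type $1$ or $2$. A chord of type $\geq 3$ splits a rectangular cycle into non-rectangular pieces, which the inner induction hypothesis does not cover. The later steps only need chords of types $1$ and $2$, so nothing is lost.
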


\begin{proof} 
    If $t=3,$ then $M$ is equivalent to a Latin square and we have proved in the last section that $\hlcl{G}{\FF}=0$ in this case. 
    
    Thus, we may assume $t \geq 4$. Let $M_1,\dots,M_{t-2}$ be mutually orthogonal Latin squares of order $n$ such that the collection $M_1,\dots,M_{t-2}$ is equivalent to $M$. Equivalently, $M_j$ corresponds to the row $j+2$ of the orthogonal array $M$ for $1\leq j\leq t-2$.
    
    Let $G_1,\dots,G_{t-2}$ be the strongly regular graphs corresponding to $M_1,\dots,M_{t-2},$ respectively. 
    
    For each $i\in [t-2]$, we view each $G_i$ as a subgraph of $G$, and in this identification, observe that any triangle or cycle in some $G_i$ is also a triangle or cycle in $G$.

    Now, let $C$ be an arbitrary cycle in $G$ and consider $T_{\FF}(C).$ It suffices to show that $T_{\FF}(C) \in \idt{1,\FF}{G}.$ If $C$ has no diagonal edges, then, in fact, $C$ is a cycle in $G_1$, and by Theorem \ref{vanishing-latin-h1}, there are triangles $C_1,\dots,C_s$ in $G_1$ and constants $c_1,\dots,c_s$ such that
    \begin{gather*}
        T_{\FF}(C) = \sum_{i=1}^s c_i\delta_{1,\FF}^T(C_i), 
    \end{gather*}
    where $T_{\FF}(C) \in \FF^{E(G_1)}$ and $\delta_{1,\FF}^T$ is the boundary operator defined on $\clc{G_1}.$ On the other hand, since each triangle $C_1,\dots,C_s$ is also in $G,$ we conclude that the same equality
    \begin{gather*}
        T_{\FF}(C) = \sum_{i=1}^s c_i \delta_{1,\FF}^T(C_i)
    \end{gather*}
    holds when we view $T_{\FF}(C) \in \FF^{E(G)}$ and $\delta_{1,\FF}^T$ denotes the corresponding boundary operator on  $\clc{G}.$ Thus, if $C$ is any cycle in $G$ with no diagonal edges, then $T_{\FF}(C) \in \idt{1,\FF}{G}.$

    Now, assume that $C$ has at least one diagonal edge. Let $D=\{e_1,\dots,e_m\}$ be the diagonal edges in $C$. For each such edge $e_i$, there is a unique $1 \leq a_i \leq t-2$ such that some triangle $C_i$ in $G_{a_i}$ contains $e_i$ as its only diagonal edge. That is, the two other edges of $C_i$ are horizontal or vertical, and consequently, both belong to $G_1$ in particular. Thus, we may choose constants $c_1,\dots,c_m \in \{1,-1\}$ such that:
    \begin{gather*}
        \al := \sum_{i=1}^m c_i\delta_{1,\FF}^T(C_i) \in \idt{1,\FF}{G} \subset \kdt{0,\FF}{G},
    \end{gather*}
    and
    \begin{gather*}
        \gamma := T_{\FF}(C) - \al \in \kdt{0,\FF}{G},
    \end{gather*}
    has support consisting of only horizontal and vertical edges. Since all vertical and horizontal edges in $G$ are also contained in the subgraph $G_1$, we may view $\gamma$ as an element of $\kdt{0,\FF}{G_1}.$ Consequently, by Theorem \ref{vanishing-latin-h1}, $\gamma \in \idt{1,\FF}{G_1}$ (viewed in $\FF^{E(G_1)}$), and consequently, $\gamma \in \idt{1,\FF}{G},$ and so, $T_{\FF}(C) = \gamma + \al \in \idt{1,\FF}{G}.$ This completes the proof.
\end{proof}

\section{Block Graphs of Steiner Systems}
\label{sec:block-graphs-of-steiner-systems}

A Steiner system $S(2,m,n)$ is an incidence structure on a set of $n$ points $P$ and a family of $m$-subsets $\mathcal{B}$ of $P$ such that, for each pair of distinct points in $P$, there is exactly one block $B \in \mathcal{B}$ that contains them. The block graph $G$ of a Steiner system $S(2,m,n)$ is the graph whose vertex set is the set of blocks $\mathcal{B}$ and two vertices are adjacent if and only if the corresponding blocks intersect. Note that by the definition of a Steiner system, two adjacent blocks in $G$ have intersection size exactly one. It is known that if $G$ is the block graph of a Steiner system $S(2,m,n),$ then $G$ is a strongly regular graph with parameters
\begin{gather*}
    \left(\frac{n(n-1)}{m(m-1)},\frac{m(n-m)}{m-1}, (m-1)^2+\frac{(n-1)}{m-1}-2, m^2\right).
\end{gather*}

For the rest of the section, if $G$ is the strongly regular graph that is the block graph of a Steiner system $S$, then, for $v \in V(G),$ we will denote by $\phi(v, S)$, or $\phi(v),$ when $S$ is understood, to denote the block of $S$ corresponding to $v.$

\begin{prop} \label{prop:steiner-4-cycle-common-neighbor}
    Let $G$ be the block graph of a Steiner system $S(2,m,n),$ where $m \geq 3.$ If $C=(a,b,c,d)$ is an induced cycle of length $4$ in $G,$ then there is a common neighbor $v$ of $a,b,c,d$ in $G.$
    % Consequently, we may choose coefficients $c_1,c_2,c_3,c_4 \in \{-1,1\}$ such that:
    % \begin{gather*}
    %     T_{\Z}(C) = \sum_{i=1}^4 c_i \delta_{1,\Z}^T(C_i),
    % \end{gather*}
    % where
    % \begin{gather*}
    %     C_1 = \{v,a,b\}, C_2 = \{v,b,c\}, C_3 = \{v,c,d\}, C_4 = \{v,d,a\}.
    % \end{gather*}
    % Thus, if $C'$ is any cycle of length $4$ in $G$, then $T_{\Z}(C) \in \idt{1,\Z}{G}.$
\end{prop}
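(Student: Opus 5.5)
Let $A=\phi(a)$, $B=\phi(b)$, $C'=\phi(c)$, $D=\phi(d)$. The plan is to find a point lying on two of these blocks that generates a common neighbour. Because $C$ is induced, $A\cap C'=\emptyset$ and $B\cap D=\emptyset$, while consecutive blocks meet in exactly one point. Put
\[
p=A\cap B,\quad q=B\cap C',\quad r=C'\cap D,\quad s=D\cap A .
\]

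First check that $p,q,r,s$ are pairwise distinct. For instance, $p=q$ would place a point in $A\cap C'$, which is impossible; the other coincidences fail the same way.

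Next look at the unique block $E$ through the two points $p$ and $r$. We want $E$ to be a common neighbour of $a,b,c,d$. The block $E$ meets $A$ and $B$ in $p$, and meets $C'$ and $D$ in $r$. It remains to see that $E$ is none of $A,B,C',D$:
\begin{itemize}
\item $E=A$ would put $r\in A\cap C'=\emptyset$;
\item $E=B$ would put $r\in B\cap D=\emptyset$;
\item $E=C'$ would put $p\in A\cap C'$;
\item $E=D$ would put $p\in B\cap D$.
\end{itemize}
So $E$ is a fifth block that intersects all four blocks. The corresponding vertex $v$ with $\phi(v)=E$ is adjacent to $a,b,c,d$. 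The points $q$ and $s$ would serve symmetrically.

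The argument is short and never uses $m\geq 3$ in an essential way; that hypothesis only guarantees the nondegenerate setting of the section. The one step needing care is confirming that $E$ differs from each of the four blocks, since that is where inducedness of $C$ (the disjointness of opposite blocks) is used.
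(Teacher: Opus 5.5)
Your proof is correct and is essentially the paper's argument: the paper also takes the unique block through $\phi(a)\cap\phi(b)$ and $\phi(c)\cap\phi(d)$ (its points $a_1$ and $d_2$, your $p$ and $r$) as the common neighbour. You also check explicitly that $p\neq r$ and that this block is none of the four given ones, which fills in a step the paper only asserts.
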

\begin{proof}
    % It suffices to prove the first statement, since then, the second statement follows from Lemma 5.9 in \cite{cioaba-guo-ji-mim}.
    % If $C$ 
    % If $C$ is induced, the second statement applies, and if $C$ is not induced, the third statement follows by an application of Lemma \ref{lemma:cycle-split-over-chord}.
    Since $a \sim b$ and $a \sim d$, but $b \not \sim d$, without loss of generality and up to relabeling the points in $S$, $\phi(a), \phi(b), \phi(d)$ must have the following forms: 
    \begin{gather*}
        \phi(a) = \{a_1,a_2,\dots,a_m\},\\
        \phi(b) = \{a_1,b_2,\dots,b_m\},\\
        % \phi(d) = \{a_2,c_2,\dots,c_m\},
        \phi(d) = \{a_2,d_2,\dots,d_m\},
    \end{gather*}
    where distinct symbols above represent distinct points in the Steiner system. That is, $a_i \not = b_j, a_i \not = d_k, b_j \not = d_k$ for every $i,j,k.$ Note that here, we have used the observation that:
    \begin{gather*}
        \phi(a) \cap \phi(b) \not = \phi(b) \cap \phi(c), \text{ since } \phi(a) \cap \phi(c) = \emptyset.
    \end{gather*}
    Finally, since $\phi(c) \cap \phi(a) = \emptyset$ and $\phi(c)$ has nonempty intersection with both $\phi(b)$ and $\phi(d),$ up to relabeling of the points in $S,$ $\phi(c)$ has the form:
    \begin{gather*}
        \phi(c) = \{b_2,d_2,c_3,\dots,c_m\},
    \end{gather*}
    where $c_i$ is distinct from $a_j,b_j,d_j$ for every $i,j.$ By the definition of the Steiner system $S$, there is exactly one vertex $v \not \in \{a,b,c,d\}$ such that $\phi(v) \supset \{a_1,d_2\}.$ Thus, 
    \begin{gather*}
        \phi(v) \cap \phi(u) \not = \emptyset,\ u \in \{a,b,c,d\}.
    \end{gather*}
    That is, $v$ is a common neighbor of $a,b,c,d,$ as needed.
\end{proof}

\begin{prop} \label{prop:steiner-5-cycle-common-neighbor}
    Let $G$ be the block graph of a Steiner system $S(2,m,n),$ where $m \geq 3.$ If $C$ is an induced cycle of length $5$ in $G,$ then, every four (necessarily consecutive) vertices in $C$ have a common neighbor in $G$.
\end{prop}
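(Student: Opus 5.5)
We will mimic the proof of Proposition \ref{prop:steiner-4-cycle-common-neighbor}: describe the blocks of the five vertices explicitly, then produce a block meeting four of them via the Steiner property. Let $C=(a,b,c,d,e)$ be an induced $5$-cycle. By symmetry of the cycle it suffices to show that $a,b,c,d$ have a common neighbor. Each adjacent pair of blocks meets in exactly one point. Write $p_{ab}=\phi(a)\cap\phi(b)$, and define $p_{bc},p_{cd},p_{de},p_{ea}$ similarly. Each non-adjacent pair of blocks is disjoint.

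\textbf{Step 1: the five intersection points are distinct.} If, for instance, $p_{ab}=p_{bc}$, this point would lie in $\phi(a)\cap\phi(c)=\emptyset$, which is impossible. The same argument applies to any two consecutive points. For non-consecutive points, say $p_{ab}=p_{cd}$, the point would lie in $\phi(a)\cap\phi(c)$, again empty. Hence all five points are distinct. Moreover, each point $p_{xy}$ lies only in $\phi(x)$ and $\phi(y)$ among the five blocks. So, up to relabeling, we have the configuration
\begin{gather*}
\phi(a)\ni p_{ea},p_{ab},\quad \phi(b)\ni p_{ab},p_{bc},\quad \phi(c)\ni p_{bc},p_{cd},\quad \phi(d)\ni p_{cd},p_{de},
\end{gather*}
with all remaining points of these blocks distinct from the named ones, except where the Steiner property forces otherwise.

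\textbf{Step 2: choose the common neighbor.} Following the $4$-cycle argument, take the unique block $B$ containing the two points $p_{ab}$ and $p_{cd}$. These points are distinct, so $B$ exists. The block $B$ meets $\phi(a)$ and $\phi(b)$ at $p_{ab}$, and meets $\phi(c)$ and $\phi(d)$ at $p_{cd}$. It remains to check that $B$ is not one of $a,b,c,d$. The point $p_{cd}$ lies in neither $\phi(a)$ nor $\phi(b)$, so $B\ne\phi(a),\phi(b)$. Symmetrically, $p_{ab}$ lies in neither $\phi(c)$ nor $\phi(d)$, so $B\ne\phi(c),\phi(d)$. Hence the vertex $v$ with $\phi(v)=B$ is adjacent to $a,b,c,d$.

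The statement does not require $v\neq e$. If $B=\phi(e)$, then $e$ itself serves as a common neighbor. This cannot in fact happen, because $\phi(e)\cap\phi(b)=\emptyset$ while $p_{ab}\in B$. Either way, the conclusion holds. Applying the same argument to each of the five rotations of $C$ gives the statement for every four consecutive vertices.

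The only step needing care is Step 1, establishing distinctness and the membership pattern of the intersection points. Once that is in place, the Steiner axiom gives $v$ immediately. Notably, the hypothesis $m\ge 3$ seems unnecessary here beyond ensuring the setting is nondegenerate.
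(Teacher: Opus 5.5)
Your proof is correct and follows the paper's argument. The paper likewise takes the unique block through the intersection point of the first edge and the intersection point of the third edge among the four consecutive vertices; for $e,a,b,c$ it uses $\phi(e)\cap\phi(a)$ and $\phi(b)\cap\phi(c)$. Your explicit check that this block differs from each of the four given blocks fills in a detail the paper leaves implicit.
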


\begin{proof}
    Let $C=(a,b,c,d,e).$ We will show that $e,a,b,c$ have a common neighbor in $G.$ By a similar argument as above, up to relabeling of the points of $S$, we may assume that $\phi(a), \phi(b), \phi(e)$ have the following forms:
    \begin{gather*}
        \phi(a) = \{a_1,a_2,\dots,a_m\},\\
        \phi(b) = \{a_1,b_2,\dots,b_m\},\\
        \phi(e) = \{a_2,e_2,\dots,e_m\},
    \end{gather*}
    where distinct symbols refer to distinct points in $S$. Furthermore, since $c \sim b$ and $c \not \sim a, c \not \sim e,$ $\phi(c)$ has the following form, up to relabeling of the points in $S:$
    \begin{gather*}
        \phi(c) = \{b_2,c_2,\dots,c_m\},
    \end{gather*}
    where the $c_i \not = a_j,b_j,e_j$ for every $i,j$. Now, by the definition of the Steiner system $S$, there is a vertex $v$ in $G$ (which is necessarily distinct from $d$, since $C$ is induced), such that $\phi(v) \supset \{a_2,b_2\}.$ That is,
    \begin{gather*}
        \phi(v) \cap \phi(u) \not = \emptyset, \ u \in \{e,a,b,c\},
    \end{gather*}
    which implies $v \sim u$ for $u \in \{e,a,b,c\}.$
\end{proof}

\begin{theorem} \label{thm:vanishing-h1-steiner-system}
    Let $G$ be the block graph of a Steiner system $S(2,m,n)$, where $m \geq 3$. Let $\FF$ be a field. Then, $\kdt{0,\FF}{G}=\idt{1,\FF}{G},$ equivalently $\hlcl{G}{\FF}=0.$
\end{theorem}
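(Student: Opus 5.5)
The plan is to apply Theorem \ref{thm:4-vertex-common-neighbor-srg} directly. The block graph $G$ of a Steiner system $S(2,m,n)$ is strongly regular with the parameters recalled above. So it suffices to check that every induced cycle of length $4$ or $5$ in $G$ has four vertices with a common neighbor in $G$.

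For induced $4$-cycles this is exactly Proposition \ref{prop:steiner-4-cycle-common-neighbor}. There the four vertices $a,b,c,d$ have a common neighbor $v$, namely the block through the points $a_1$ and $d_2$. For induced $5$-cycles, Proposition \ref{prop:steiner-5-cycle-common-neighbor} shows that any four consecutive vertices have a common neighbor. Both hypotheses of Theorem \ref{thm:4-vertex-common-neighbor-srg} therefore hold for every field $\FF$, and the theorem gives $\kdt{0,\FF}{G}=\idt{1,\FF}{G}$, i.e.\ $\hlcl{G}{\FF}=0$.

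Two small points need checking.

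First, degenerate cases. If $G$ has no induced cycles of length $4$ or $5$, the hypotheses hold vacuously. One example is $G$ complete, as for a projective plane, where any two blocks meet.

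Second, the inductive step in the proof of Theorem \ref{thm:4-vertex-common-neighbor-srg} uses that induced cycles of length at least $6$ force $\mu>0$, so that $v_1$ and $v_4$ have a common neighbor. Here $\mu=m^2>0$ automatically, so this step goes through.

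The only real obstacle is whether the two propositions cover every configuration of an induced $4$- or $5$-cycle. The point-labelling arguments there rely on $m\ge 3$ and on the fact that adjacent blocks meet in exactly one point. I expect these to be verified already in those proofs, so the theorem itself should be a short corollary.
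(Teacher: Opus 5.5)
Your proposal is correct and matches the paper's proof: it checks the hypotheses of Theorem \ref{thm:4-vertex-common-neighbor-srg} using Propositions \ref{prop:steiner-4-cycle-common-neighbor} and \ref{prop:steiner-5-cycle-common-neighbor}, and the conclusion follows. Your side remarks on the complete (projective plane) case and on $\mu=m^2>0$ are harmless sanity checks that the paper leaves implicit.
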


\begin{proof}
    By Proposition \ref{prop:steiner-4-cycle-common-neighbor} and Proposition \ref{prop:steiner-5-cycle-common-neighbor}, $G$ satisfies the hypothesis in Theorem \ref{thm:4-vertex-common-neighbor-srg}. The theorem follows.
\end{proof}

\section{Conference Graphs} \label{sec:conference-graphs}

In this section, we prove that all sufficiently large conference graphs have vanishing clique homology over every field. A conference graph $G$ is a strongly regular graph with parameters $(v,\frac{v-1}{2}, \frac{v-5}{4}, \frac{v-1}{4})$ for some positive integer $v \geq 5.$ The nontrivial eigenvalues of this graph are:
\begin{gather*}
    \frac{-1+\sqrt{v}}{2},\ \frac{-1-\sqrt{v}}{2}.
\end{gather*}

%In particular, the spectral gap, $\rho$, of $G$ is $\frac{1+\sqrt{v}}{2}.$ 

Recall that for any connected $k$-regular graph $G$, we define:
\begin{gather*}
    \rho(G) = \max \{|\theta| : \theta \text{ is an eigenvalue of } G \text{ and } \theta \not = k\}. 
\end{gather*}

We will need the following result which is sometimes referred to as the expander mixing lemma, see \cite{AlonChung88} and \cite[Lemma 2.5]{HLW06}.

\begin{lemma}[Lemma 2.5 \cite{HLW06}]\label{lem:expmixlemma}
Let $G$ be a $k$-regular graph with $v$ vertices. If $S$ and $T$ are subsets of the vertex set of $G$, and $e(S,T)$ denotes the number of ordered pairs $(s,t)$ of vertices of $G$ such that $s \in S,\ t \in T$ and $\{s,t\} \in E(G)$, then
\begin{gather*}
        \left|e(S,T) -\frac{k}{v}|S||T|\right| \leq \rho(G) \sqrt{|S||T|}.
    \end{gather*}
\end{lemma}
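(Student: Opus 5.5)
The statement to prove is the expander mixing lemma. I would use the spectral decomposition of the adjacency matrix $A$ of $G$.

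Since $G$ is $k$-regular, $A\mathbf{1}=k\mathbf{1}$. Let $J$ be the all-ones $v\times v$ matrix. Choose an orthonormal eigenbasis $u_1=\mathbf{1}/\sqrt{v},u_2,\dots,u_v$ of $A$ with eigenvalues $\theta_1=k,\theta_2,\dots,\theta_v$.

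The first step is to write $e(S,T)=\chi_S^T A\chi_T$, where $\chi_S,\chi_T$ are the $0/1$ indicator vectors. This counts ordered pairs $(s,t)$ with $s\in S$, $t\in T$, $s\sim t$, as required. The main term is
\[
\frac{k}{v}|S||T|=\chi_S^T\Bigl(\frac{k}{v}J\Bigr)\chi_T .
\]
Hence
\[
e(S,T)-\frac{k}{v}|S||T| = \chi_S^T B\chi_T, \qquad B := A-\frac{k}{v}J .
\]
The matrix $B$ has the same eigenvectors as $A$. It acts as $k-k=0$ on $u_1$ and as $\theta_i$ on $u_i$ for $i\geq 2$, because each such $u_i$ is orthogonal to $\mathbf{1}$ and therefore $Ju_i=0$.

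Next, expand $\chi_S=\sum_i a_iu_i$ and $\chi_T=\sum_i b_iu_i$. Then
\[
\chi_S^TB\chi_T=\sum_{i\ge 2}\theta_i a_ib_i .
\]
Every $\theta_i$ with $i\geq 2$ is an eigenvalue different from the trivial one, so $|\theta_i|\le\rho(G)$. By Cauchy--Schwarz,
\[
\Bigl|\sum_{i\geq 2}\theta_i a_i b_i\Bigr| \le \rho(G)\Bigl(\sum_{i\ge2}a_i^2\Bigr)^{1/2}\Bigl(\sum_{i\ge2}b_i^2\Bigr)^{1/2}\le \rho(G)\,\|\chi_S\|\,\|\chi_T\| .
\]
Finally, $\|\chi_S\|=\sqrt{|S|}$ and $\|\chi_T\|=\sqrt{|T|}$, which gives the claimed bound $\rho(G)\sqrt{|S||T|}$.

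The only delicate point is the definition of $\rho(G)$. In this paper $\rho(G)$ is the maximum over eigenvalues $\theta\neq k$, so it measures the nontrivial eigenvalues only. The argument needs $k$ to have multiplicity one, which holds when $G$ is connected; that is the setting of the definition, and in particular of conference graphs.

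If $G$ were disconnected, a second eigenvector $u_i$ with eigenvalue $k$ would contribute $k\,a_ib_i$ to the sum. In that case one would have to interpret $\rho$ as the second-largest absolute eigenvalue counted with multiplicity. I would note this and proceed under connectivity, which suffices for the strongly regular graphs considered here. The remaining steps are routine, and I expect no other obstacle.

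A cruder estimate $\bigl(\sum a_i^2\bigr)^{1/2}\bigl(\sum b_i^2\bigr)^{1/2}\le\|\chi_S\|\,\|\chi_T\|$ already suffices, though the sharper bound with $|S|(1-|S|/v)$ is available if wanted.
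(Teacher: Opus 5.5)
Your proof is correct. The paper gives no proof of its own and simply quotes this as Lemma 2.5 of \cite{HLW06}. Your argument is exactly the standard proof given there: write $e(S,T)=\chi_S^TA\chi_T$, expand in an orthonormal eigenbasis containing $\mathbf{1}/\sqrt{v}$, remove the $k$-term, and apply Cauchy--Schwarz.

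Your connectivity caveat is well placed. With the paper's definition of $\rho(G)$ as a maximum over eigenvalues $\theta\neq k$, the inequality genuinely fails for disconnected graphs. For example, take two disjoint copies of $K_{k+1}$ with $k\geq 3$ and $S=T$ equal to one copy: then $\rho(G)=1$, the left side is $k(k+1)/2$, and the right side is only $k+1$. The paper, however, only defines $\rho$ for connected graphs and only applies the lemma to connected conference graphs.
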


\begin{defn}
Let $G$ be a graph. For two distinct vertices  $x$ and $y$, we denote by $G_{x,y}:=G[N_G(x) \cap N_G(y)]$ the subgraph of $G$ induced by the common neighborhood of $x$ and $y$.
\end{defn}

\begin{prop} \label{prop:common-neighborhood-two-components}
There exists a universal constant $C_0'$ such that, if $G$ is a conference graph on $v \geq C_0'$ vertices and $x,y$ are two nonadjacent and distinct vertices of $G$ such that $G_{x,y}$ is disconnected, then $G_{x,y}$ consists of exactly two connected components: one which is a single vertex graph and another that consists of all remaining vertices in $G_{x,y}$.   

\end{prop}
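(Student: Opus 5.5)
The plan is to combine the expander mixing lemma (Lemma \ref{lem:expmixlemma}) with the parameter counting available in a conference graph. We use $k=\frac{v-1}{2}$, $\lambda=\frac{v-5}{4}$, $\mu=\frac{v-1}{4}$ and $\rho(G)=\frac{1+\sqrt v}{2}$. Write $S=N_G(x)\cap N_G(y)$, so that $|S|=\mu=\frac{v-1}{4}$. Suppose $G_{x,y}$ is disconnected, and let $A$ be the vertex set of a smallest connected component and $B=S\setminus A$; then $1\le |A|\le |S|/2$.

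\textbf{Step 1: a crude size bound.} Since there are no edges between $A$ and $B$, we have $e(A,B)=0$. Lemma \ref{lem:expmixlemma} then gives $\frac{k}{v}|A||B|\le \rho(G)\sqrt{|A||B|}$, that is,
\begin{gather*}
|A||B|\le \left(\frac{\rho(G)\,v}{k}\right)^2=\left(\frac{(1+\sqrt v)\,v}{v-1}\right)^2=v+O(\sqrt v).
\end{gather*}
Since $|B|\ge |S|/2\approx v/8$, this forces $|A|\le 8+o(1)$. The same inequality applied to any union of components shows the following. If there were two components of size at least $2$ each, or three or more components, then splitting $S$ into two parts, each containing a component, either makes both parts of size $\Omega(v)$ or gives a small side of size $a$ with $a(\mu-a)>v+O(\sqrt v)$ once $a\ge 5$. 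I would also bound $|A|$ from the other side: each $w\in A$ has at most $|A|-1$ neighbors in $S$, so $e(A,S)\le |A|(|A|-1)$, and mixing gives $e(A,S)\ge \frac{k}{v}|A|\mu-\rho\sqrt{|A|\mu}$. For $|A|\le 8$ both estimates are consistent, so the mixing lemma alone only reduces the problem to components of bounded size, roughly $|A|\le 8$. This is not yet enough to conclude.

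\textbf{Step 2: excluding a small component of size $2\le a\le 8$.} This is where I expect the main difficulty, because global spectral estimates cannot distinguish $a=1$ from $a=2$. The idea is local counting. Take an edge $ww'$ inside $A$. Every $w\in A$ has at most $a-1\le 7$ neighbors in $S$. Since $|N(w)\cap N(x)|=|N(w)\cap N(y)|=\lambda$, the sets $P_w=N(w)\cap (N(x)\setminus N(y))$ and $Q_w=N(w)\cap(N(y)\setminus N(x))$ each have size $\lambda-O(1)\approx v/4$. Also $N(w)$ lies in $N(x)\cup N(y)\cup\{x,y\}$ up to $O(1)$ vertices, because $|N(w)|=k=2\lambda+2$. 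The same holds for $w'$. Now $w\sim w'$ gives $|N(w)\cap N(w')|=\lambda$, while $w'$ has at most $7$ neighbors in $S$. Hence, up to $O(1)$ vertices, $P_w$ and $P_{w'}$ are sets of size $\approx v/4$ inside $N(x)\setminus N(y)$, a set of size $\approx v/4$. The same is true of $Q_w,Q_{w'}$ inside $N(y)\setminus N(x)$. So $N(w)$ and $N(w')$ nearly coincide on $(N(x)\cup N(y))\setminus S$.

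I would then derive a contradiction by considering a vertex $u\in B$. The vertex $u$ is nonadjacent to $w$, and $u$ and $w$ share $x,y$, so $|N(u)\cap N(w)|=\mu$. On the other hand, $u$ has about $v/4$ neighbors in each of $N(x)\setminus N(y)$ and $N(y)\setminus N(x)$ as well. Alternatively, and more cleanly, apply Lemma \ref{lem:expmixlemma} to the sets $T=N(x)\setminus(N(y)\cup N(w))$, which has size $O(1)$, and to the near-complementary structure. My first attempt would be to show that the near-equality of the neighborhoods of $w,w'$ across the $\approx v/2$ vertices of $N(x)\triangle N(y)$ contradicts $|N(w)\cap N(w')|=\lambda\approx v/4$. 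The direct comparison gives $|N(w)\cap N(w')|\ge v/2-O(1)$, which exceeds $\lambda$ for large $v$. If the bookkeeping of the $O(1)$ error terms works out, this already closes the argument.

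\textbf{Step 3: assembling the conclusion.} Steps 1 and 2 together show that every component other than the largest one is a single vertex. It then remains to show that there is only one singleton component. If $\{w_1\}$ and $\{w_2\}$ are both isolated in $G_{x,y}$, the same neighborhood analysis as in Step 2 applies to the nonadjacent pair $w_1,w_2$. Their neighborhoods both nearly fill $N(x)\triangle N(y)$, so $|N(w_1)\cap N(w_2)|\ge v/2-O(1)>\mu$, which is a contradiction. Taking $C_0'$ large enough to absorb all the $O(\sqrt v)$ and $O(1)$ error terms then yields the proposition.
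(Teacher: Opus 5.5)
Your proposal is correct and follows essentially the paper's route: the expander mixing lemma bounds any side of size at most $\mu/2$ of an edgeless cut of $G_{x,y}$ by a constant (the paper gets $|S|\le 4$ from $5(\mu-5)>\rho(G)^2v^2/k^2$). Counting common neighbors inside $N(x)\setminus N(y)$ and $N(y)\setminus N(x)$, each of size $k-\mu=\mu$, then gives two distinct vertices of the small side at least $2\mu-O(1)>\max\{\lambda,\mu\}$ common neighbors, a contradiction.

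The bookkeeping you were unsure about does close: with $|A|\le 8$ you get $|P_w|,|P_{w'}|\ge\lambda-7=\mu-8$, hence $|N(w)\cap N(w')|\ge 2\mu-32>\lambda$ once $\mu>31$. The detours through $u\in B$ are unnecessary. The paper also merges your Steps 2 and 3 by taking any two distinct vertices of the small side, adjacent or not, and comparing their common-neighbor count with $\max\{\lambda,\mu\}$.
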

\begin{proof}
We first prove that there exists a universal constant $C_0'$ such that: if $G$ is a conference graph on $v \geq C_0'$ vertices, and $x$ and $y$ are any nonadjacent, distinct vertices of $G$, and $S$ is a nonempty subset of $N_G(x) \cap N_G(y)$ such that $|S| \leq \frac{1}{2}(|N_G(x) \cap N_G(y)|)=\frac{\mu}{2}$ and $|e(S, \ (N_G(x) \cap N_G(y))\setminus S)|=0,$ then $|S| \leq 4$.

Let $f: (1, \infty) \to \re$ defined by:
\begin{gather*}
        f(v) = \frac{v^2(1+\sqrt{v})^2}{(v-1)^2}-5\left(\frac{v-1}{4}-5\right) = -\frac{v^3-8v^2\sqrt{v}-14v^2+5v}{4(v-1)^2}+\frac{105}{4}.
\end{gather*}
From the second expression of $f(v)$ above, it follows that $\lim_{x \to \infty}f(x) = -\infty$. 

Let $C_0'$ be the smallest positive integer such that $C_0' > 65$ and $f(x)<0$, for any $x\geq C_0'$.

Assume that $G$ is a conference graph with parameters $(v,k,\lambda,\mu)=(v,\frac{v-1}{2}, \frac{v-5}{4}, \frac{v-1}{4})$, where $v \geq C_0'$. 
Therefore, $\mu \geq 10$. Furthermore, assume that $G$ has two distinct, nonadjacent vertices $x$ and $y$ such that $G_{x,y}$ is disconnected. 

Let $S$ be a nonempty subset of $N_G(x) \cap N_G(y)$ such that $|S| \leq \frac{\mu}{2}$ and $|e(S,\  (N_G(x) \cap N_G(y)\setminus S)|=0.$ We will show that $|S| \leq 4$. 

Assume on the contrary that $s:=|S| \geq 5$. We have that $|S|+|T|=|N_G(x)\cap N_G(y)|=\mu=\frac{v-1}{4}>10$ and $|S||T|= s(\mu-s)$. The function $g:\re \to \re$ defined by $g(x)=x(\mu-x)$ is increasing on $(-\infty, \frac{\mu}{2}).$ By the hypothesis, $5 \leq s \leq \frac{\mu}{2},$ and we deduce that $5(\mu-5) \leq |S||T|$. Applying Lemma \ref{lem:expmixlemma} to $S$ and $T$, we get that
    \begin{gather*}
        \left| e(S,T) -\frac{k}{v}|S||T|\right| \leq \rho(G) \sqrt{|S||T|} 
    \end{gather*}
    Simplifying after using the hypothesis $|e(S,T)|=0$ this yields:
    \begin{gather*}
        |S||T| \leq \frac{\rho(G)^2v^2}{k^2}.
    \end{gather*}
    Thus,
    \begin{gather*}
        5\left(\frac{v-1}{4}-5\right) = 5(\mu-5) \leq |S||T| \leq \frac{\rho(G)^2v^2}{k^2} = \frac{\left( \frac{1+\sqrt{v}}{2}\right)^2v^2}{\left(\frac{v-1}{2}\right)^2}=\frac{v^2(1+\sqrt{v})^2}{(v-1)^2}.
    \end{gather*}
    Consequently,
    \begin{gather*}
        f(v) = \frac{v^2(1+\sqrt{v})^2}{(v-1)^2} - 5\left(\frac{v-1}{4}-5\right) \geq 0.
    \end{gather*}
    However, this contradicts that assumption that $f(v) < 0.$ We conclude that $|S| \geq 5$ is not possible, and our claim follows. 

We now prove that if $S,T$ are sets of vertices of $G_{x,y}$ such that $0 < |S| \leq \frac{|V(G_{x,y})|}{2}=\frac{\mu}{2},$ $S \cap T = \emptyset,$ $S \cup T = V(G_{x,y}),$ and $e(S,T)=0,$ then $|S|=1.$ Assume on the contrary that $S$ has at least two distinct vertices $a,b$. By our previous claim, we have $|S|\leq 4.$\par 

Let $W=N_G(x) \cap N_G(y)$, and $X = N_G(x) \setminus W$. Hence, $|X|=|N_G(x)|-|W|=\frac{v-1}{2}-\frac{v-1}{4}=\mu.$ Thus,
    \begin{gather*}
        N_G(x) = S \sqcup T \sqcup X, \text{ (where $\sqcup$ means disjoint union)}.    
    \end{gather*}
    Since $a \sim x$, we have $|N_G(a) \cap N_G(x)|=\lambda=\mu-1$. 
    
    Because $a \in S$ and $e(S,T)=0$, we have that $|N(a)\cap T|=0$. Also, as $|S|\leq 4$ and $a\in S$, $|N(a)\cap S|\leq 3$. Hence, $|N(a)\cap W|\leq 3$ as $W=S\sqcup T$.
    
    Therefore, $\mu-1=|N(a) \cap N(x)|=|N(a) \cap X| +|N(a)\cap W|\leq |N(a)\cap X|+3$ and so, $|N(a) \cap X|\geq \mu-4$.   
    
    Since $b \sim x$ and $b \in S,$ a similar argument shows that $|N(b) \cap X|\geq \mu-4$. By the inclusion-exclusion principle,
    \begin{align*}
        |(N(a) \cap X)\cap (N(b) \cap X)|& = |N(a) \cap X| + |N(b) \cap X| - |(N(a) \cap X) \cup (N(b) \cap X)|\\ 
        &\geq |N(a) \cap X| + |N(b) \cap X| - |X|\\\    
        &\geq 2(\mu-4)-\mu=\mu-8.
    \end{align*}
    Hence, $a$ and $b$ have at least $\mu-8$ common neighbors in $X = N(x) \setminus N(y)$. As $a$ and $b$ are both adjacent to $y$, we may argue similarly and conclude that $a$ and $b$ have at least $\mu-8$ common neighbors in $N(y) \setminus N(x)$. Thus, the number of common neighbors of $a$ and $b$ is at least $2(\mu-8)=2\mu-16$.

    However, since $v > 65$, we have $\mu=\frac{v-1}{4} > 16,$ and so, $2\mu-16 > \mu= \max\{\mu, \lambda\}.$ Thus, the distinct vertices $a,b$ cannot have $2\mu-16$ or more common neighbors. This gives us a contradiction, and we conclude that $|S|$ cannot have more than one vertex. This completes the proof of the proposition.    
\end{proof}

\begin{rk}
We claim that $C_0'=256$ suffices: that is, for $v \geq 256$,
$$
f(v) = -\frac{v^{3} - 8 \, v^{\frac{5}{2}} - 119 \, v^{2} + 215 \, v - 105}{4 \, {\left(v-1\right)^2}} < 0
$$
holds. Consequently, it suffices to prove that for $v \geq 256$,
$$
v^{3} - 8 \, v^{\frac{5}{2}} - 119 \, v^{2} + 215 \, v - 105 > 0
$$
 holds. Fix $v \geq 256$. Observe that $215v-105>0$. Furthermore, $8v^{\frac{1}{2}}\geq 128$, and so, $8v^{\frac{5}{2}} \geq 128v^2 > 119v^2$. Thus,
$$
v^{3} - 8 \, v^{\frac{5}{2}}-119v^2 > v^3-8v^{\frac{5}{2}}-8v^{\frac{5}{2}}=v^3-16v^{\frac{5}{2}}.
$$
Also, $v \geq 256$ gives us $v \geq 16v^{\frac{1}{2}},$ and, $v^3 \geq 16v^{\frac{5}{2}}$. Combining the last two inequalities, we have:
$$
v^{3} - 8 \, v^{\frac{5}{2}}-119v^2 > v^3-16v^{\frac{5}{2}} \geq 0.
$$
Thus,
$$
v^{3} - 8 \, v^{\frac{5}{2}} - 119 \, v^{2} + 215 \, v - 105 = (v^{3} - 8 \, v^{\frac{5}{2}} - 119 \, v^{2})+(215v-105) > 0 + 0= 0.
$$
This completes the proof of the claim.
\end{rk}

\begin{theorem} \label{thm:conference-induced-4-cycle-across-vertices}
Let $G$ be a conference graph on $v \geq C_0'$ vertices, and let $(a,b,c,d)$ be an induced cycle in $G$. If the vertices $b$ and $d$ are in different components of $G_{a,c},$ then $a$ and $c$ are in the same component of $G_{b,d}.$
\end{theorem}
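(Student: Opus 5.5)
Since $(a,b,c,d)$ is induced, $a\not\sim c$ and $b\not\sim d$, so $G_{a,c}$ and $G_{b,d}$ each have $\mu=\frac{v-1}{4}$ vertices, with $b,d\in V(G_{a,c})$ and $a,c\in V(G_{b,d})$. The plan is to argue by contradiction and use Proposition \ref{prop:common-neighborhood-two-components} twice to pin down a very rigid local structure, then count common neighbours to obtain a violation of the parameters $\lambda=\mu-1$ or $\mu$.

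Suppose $b,d$ lie in different components of $G_{a,c}$, and also $a,c$ lie in different components of $G_{b,d}$. By Proposition \ref{prop:common-neighborhood-two-components}, $G_{a,c}$ has exactly two components, one of them a single vertex. So, up to the symmetry $b\leftrightarrow d$, $b$ is an isolated vertex of $G_{a,c}$, and $d$ lies in the large component $W_{a,c}\setminus\{b\}$, where $W_{a,c}=N(a)\cap N(c)$. Likewise, up to the symmetry $a\leftrightarrow c$, $a$ is isolated in $G_{b,d}$. Hence $b$ has no neighbours in $W_{a,c}$ and $a$ has no neighbours in $W_{b,d}=N(b)\cap N(d)$.

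Now count the common neighbours of the adjacent pair $a,b$, which number $\lambda=\mu-1$. Partition $N(a)$ using $c$: $N(a)=W_{a,c}\sqcup X_a$ with $|X_a|=\mu$, and partition $N(b)$ using $d$ in the same way. Since $b$ has no neighbour in $W_{a,c}$, every common neighbour of $a,b$ lies in $X_a=N(a)\setminus N(c)$. Since $a$ has no neighbour in $W_{b,d}$, every common neighbour lies in $N(b)\setminus N(d)$. So the $\mu-1$ common neighbours of $a,b$ are contained in $N(a)\setminus(N(c)\cup N(d))\cap N(b)$, which leaves almost no room for other incidences.

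The main obstacle is turning this into a contradiction, and I expect to need a second count. My first attempt would use the pair $b,c$ (adjacent, also $\lambda=\mu-1$ common neighbours): $b$ isolated in $G_{a,c}$ forces all common neighbours of $b,c$ into $N(c)\setminus N(a)$. Then I would consider the vertex $d$, which sits in the big component of $G_{a,c}$ and is adjacent to $a$ and $c$, and count its neighbourhood against $N(a)\cap N(b)$ and $N(b)\cap N(c)$. Failing that, I would mimic the second half of the proof of Proposition \ref{prop:common-neighborhood-two-components}: the sets $N(a)\cap N(b)$ and $N(c)\cap N(b)$ each have size $\mu-1$ inside $N(b)\setminus N(d)$, which has size $\mu$. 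So they overlap in at least $\mu-2$ vertices, which would be common neighbours of $a$ and $c$ adjacent to $b$. That contradicts $b$ being isolated in $G_{a,c}$ as soon as $\mu>2$, which holds since $v\ge C_0'>65$. This last inclusion–exclusion is the step I would check most carefully, because it looks like it already closes the argument.
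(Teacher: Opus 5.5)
Your setup is fine. If the conclusion failed, Proposition~\ref{prop:common-neighborhood-two-components} together with the reflections $b\leftrightarrow d$ and $a\leftrightarrow c$ of the $4$-cycle (the second does not change $G_{a,c}$) lets you take $b$ to be the singleton of $G_{a,c}$ and $a$ the singleton of $G_{b,d}$. The inclusion $N(a)\cap N(b)\subseteq N(b)\setminus N(d)$ is also correct.

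The step you expect to close the argument is not correct. You also place $N(b)\cap N(c)$ inside $N(b)\setminus N(d)$. That would require $c$ to have no neighbour in $N(b)\cap N(d)$, i.e.\ $c$ to be isolated in $G_{b,d}$ as well. The proposition allows only one singleton component. With $a$ the singleton, $c$ lies in the component on $\mu-1\ge 2$ vertices and certainly has neighbours in $N(b)\cap N(d)$. In fact $N(a)\cap N(b)$ and $N(b)\cap N(c)$ are \emph{disjoint}, exactly because $b$ has no neighbour in $N(a)\cap N(c)$. So the ``overlap of at least $\mu-2$'' comes only from the unjustified containment. As written, no contradiction is reached, and the route you list first (counting the neighbourhood of $d$) is never carried out.

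That count is the missing idea, and it is the core of the paper's proof. Set $W=N(a)\cap N(c)$, $A=N(a)\setminus W$, $C=N(c)\setminus W$, so $|A|=|C|=\mu$. Since $b$ is isolated in $G_{a,c}$ and $\lambda=\mu-1$, you get $N(a)\cap N(b)=A\setminus\{a_0\}$, $N(c)\cap N(b)=C\setminus\{c_0\}$, and $N(b)=\{a,c\}\sqcup(A\setminus\{a_0\})\sqcup(C\setminus\{c_0\})$. Since $d\in W$ is adjacent to $a$ and $c$, $|N(d)\cap A|=|N(d)\cap C|=\mu-1-\deg_W(d)$. Evaluating $\mu=|N(b)\cap N(d)|$ through this decomposition yields $2\deg_W(d)=\mu-\mathbf{1}_{d\sim a_0}-\mathbf{1}_{d\sim c_0}$, hence $|N(d)\cap A|\ge\mu/2-1>1$. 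Thus $d$ has a neighbour in $A\setminus\{a_0\}=N(a)\cap N(b)$, so $a$ is not isolated in $G_{b,d}$. Symmetrically neither is $c$, and the proposition puts $a$ and $c$ in the same component.

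In your contradiction framing this is even shorter. Since $a$ is isolated in $G_{b,d}$, $N(d)\cap A\subseteq\{a_0\}$, so $\deg_W(d)\ge\mu-2$ and $|N(d)\cap C|\le 1$. Hence $\mu=|N(b)\cap N(d)|\le 2+0+1=3$, contradicting $\mu>16$.
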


\begin{proof}
    We may assume $G_{a,c}$ is disconnected, since otherwise the theorem is immediate. The hypothesis implies $G_{a,c}$ has exactly two connected components, one of which is an isolated vertex. Furthermore, one of the vertices $b$ and $d$ must form the single vertex component. Without loss of generality, we may assume that $b$ is the single vertex component. Let the strongly regular graph parameters of $G$ be $(v,k,\lambda,\mu)=(v, \frac{v-1}{2}, \frac{v-5}{4}, \frac{v-1}{4})$.  Denote:
    \begin{gather*}
        W = N_G(a) \cap N_G(c), \quad A = N_G(a) \setminus W, \quad C = N_G(c) \setminus W.
    \end{gather*}
    That is,
    \begin{gather*}
        N_G(a) = A \sqcup W, \quad N_G(c) = C \sqcup W.
    \end{gather*}
    Here, $|A|=|C|=k-\mu=\mu$.\par 
    
    Since $b \sim a,$ we have $|N_G(a) \cap N_G(b)|=\lambda=\mu-1.$ Because $b$ is an isolated vertex in the subgraph induced by $W$, $|N_G(b) \cap W|=0.$ Hence, $|N_G(b)\cap A|=\mu-1=|A|-1.$ This means that there is a unique $a_0 \in A$ such that $N_G(b) \cap N_G(a) = A \setminus \{a_0\}.$ Similarly, since $b \sim c,$ we conclude that there is a unique $c_0 \in C$ such that $N_G(b) \cap N_G(c) = C \setminus \{c_0\}.$ Moreover, $\{a,c\} \subset N_G(b)$ and $a, c \not \in A \cup C.$ Finally, since $|N_G(b)| = k=2(\mu-1)+2$, we must have that
    \begin{gather*}
        N_G(b) = \{a,c\} \sqcup (A \setminus \{a_0\}) \sqcup (C  \setminus \{c_0\}).
    \end{gather*}
    For vertices $x, y \in G$, we define $\mathbf{1}_{x \sim y}$ to be $1$, if $x \sim y$, and $0$, otherwise. Also, let $\deg_W(x)=|N_G(x) \cap W|.$ We have that
    \begin{align*}
        \mu &= |N_G(b) \cap N_G(d)| =  |\{a,c\} \cap N_G(d)| + |(A \setminus \{a_0\}) \cap N_G(d)|) + | (C  \setminus \{c_0\}) \cap N_G(d)|\\
        & = 2 + |(A \setminus \{a_0\}) \cap N_G(d)| + | (C  \setminus \{c_0\}) \cap N_G(d)|\\
        &= 2 + |A \cap N_G(d)| + | C \cap N_G(d)| - \mathbf{1}_{d \sim a_0} - \mathbf{1}_{d \sim c_0}\\
        &= 2 + \Big( (\mu-1) - \deg_W(d)\Big) + \Big( (\mu-1) - \deg_W(d)\Big)- \mathbf{1}_{d \sim a_0} - \mathbf{1}_{d \sim c_0}\\
    \end{align*}
    In particular, 
    \begin{gather*}
        2\deg_W(d) = \mu - \mathbf{1}_{d \sim a_0} - \mathbf{1}_{d \sim c_0} \implies \deg_W(d) \leq \frac{\mu}{2},
    \end{gather*}
    and so,
    \begin{gather*}
        |N_G(d) \cap A| = (\mu-1) - \deg_W(d) \geq \frac{\mu}{2}-1.
    \end{gather*}
    Similarly, we can show that
    \begin{gather*}
        |N_G(d) \cap C| \geq \frac{\mu}{2}-1.
    \end{gather*}
If $\mu \geq 5$, then $|N_G(d) \cap A| \geq \frac{\mu}{2}-1$ implies that $|N_G(d)\cap A|>1$ and consequently, $|N_G(d)\cap (A\setminus \{a_0\})|\geq 1$. Because $b$ is adjacent to every vertex in $A\setminus \{a_0\}$, we deduce that there is $z \in A\setminus \{a_0\}$ such that  $z \sim a$, $z\sim b$, and $z \sim d$. 

This implies that, in $G_{b,d},$ the vertex $a$ does not form a single vertex connected component. Similarly, there is $z' \in C\setminus \{c_0\}$ that is a common neighbor of $b,c,$ and $d$. This shows that the vertex $c$ also does not form a single vertex connected component in $G_{b,d}$. By Proposition \ref{prop:common-neighborhood-two-components}, both $a,c$ must be in the same component of $G_{b,d}$. This finishes the proof.
\end{proof}

A slightly weaker version of the following lemma is implicit in the proof of Lemma 5.19 in \cite{cioaba-guo-ji-mim}. Here, we prove a slightly stronger version that is valid over any field.  
\begin{lemma} \label{lemma:path-in-common-neighborhood}
    Let $G$ be a graph, $\FF$ a field, $C=(a,b,c,d)$ a cycle in $G$, and $v_1=b,\dots,v_s=d$ a path between $b$ and $d$ such that $\{v_1,\dots,v_s\} \subset N_G(a) \cap N_G(c)$. For $i\in \{1,\ldots,s-1\}$, let $C_i=\{a,v_i,v_{i+1}\}$ and $C_i'=\{c,v_i,v_{i+1}\}$. 
    
    There are $c_1,\dots,c_{s-1},c_1',\dots,c_{s-1}' \in \{-1,1\} \subset \FF$ such that 
    \begin{gather*}
        T(C)=\sum_{i=1}^{s-1} c_i\delta_1^T(C_i)+c_i'\delta_1^T(C_i').
    \end{gather*}
\end{lemma}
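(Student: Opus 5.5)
We argue by induction on $s$, peeling off one "fan" of two triangles at a time. Throughout, $T$ denotes $T_{\FF}$ and all sums are in $\FF^{X_1}$. The key observation is that for each $i$, the two triangles $C_i=\{a,v_i,v_{i+1}\}$ and $C_i'=\{c,v_i,v_{i+1}\}$ share the edge $\{v_i,v_{i+1}\}$. Hence we can choose signs $c_i,c_i'\in\{-1,1\}$ so that $c_i\delta_1^T(C_i)+c_i'\delta_1^T(C_i')$ cancels on $\{v_i,v_{i+1}\}$. The result is then a $\ker\delta_0^T$ element with coefficients $\pm1$, supported exactly on the $4$-cycle $(a,v_i,c,v_{i+1})$. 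By Proposition \ref{prop:TC}, and since all coefficients are $\pm1$, this element equals $\pm T(a,v_i,c,v_{i+1})$. After absorbing that sign into $c_i,c_i'$, each $T(a,v_i,c,v_{i+1})$ is a signed sum of the two triangle boundaries.

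\textbf{Base case} $s=2$. The path is the edge $\{b,d\}$, which is then a chord of $C$. Lemma \ref{lemma:cycle-split-over-chord} gives $T(C)=\pm T(a,b,d)\pm T(b,c,d)$, and each triangle term is $\pm\delta_1^T$ of that triangle. This is exactly the claimed form with $s-1=1$.

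\textbf{Inductive step}, $s\ge3$. Set $D=(a,v_2,c,d)$; this is a cycle because $v_2\in N(a)\cap N(c)$ and $v_2\neq d$. The vertices $v_2,\dots,v_s$ form a shorter path from $v_2$ to $d$ inside $N(a)\cap N(c)$. By induction, $T(D)=\sum_{i=2}^{s-1}(\pm\delta_1^T(C_i)\pm\delta_1^T(C_i'))$. Next, the element $T(C)\mp T(a,b,c,v_2)$, with the sign chosen to kill the edge $\{a,b\}$, has support contained in the edges of $C$ and of $(a,b,c,v_2)$ minus $\{a,b\}$. The edge $\{b,c\}$ is then the only remaining edge at the vertex $b$. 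Since the element lies in $\ker\delta_0^T$, the coefficient of $\{b,c\}$ must vanish at $b$. The support is therefore exactly the edge set of $D$, and Proposition \ref{prop:TC} gives $T(C)=\pm T(a,b,c,v_2)+ x\,T(D)$ for some $x\in\FF$. Comparing the coefficients on the edge $\{c,d\}$, which are $\pm1$ in both $T(C)$ and $T(D)$, forces $x=\pm1$. Finally, $T(a,b,c,v_2)$ is $\pm\delta_1^T(C_1)\pm\delta_1^T(C_1')$ by the observation in the first paragraph. Summing the three pieces gives the claimed expression with all coefficients in $\{-1,1\}$.

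\textbf{Main obstacle.} The delicate point is the support bookkeeping, so that Proposition \ref{prop:TC} applies with a coefficient that is $\pm1$ rather than an arbitrary scalar. There is also a degenerate case to handle: $v_2$ might coincide with a vertex already used, such as $d$ when $s=2$, which we have treated separately. If the path is not assumed simple, we first shorten it to a simple path. The bookkeeping itself is straightforward: the two cycles $C$ and $(a,b,c,v_2)$ share exactly the path $a,b,c$, and $\ker\delta_0^T$ forbids a degree-one vertex in the support. This is the same pattern used in the proof of Proposition \ref{prop:rectangular-cycles}.
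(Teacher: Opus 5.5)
Your proof is correct and is essentially the paper's argument: both induct on $s$ and peel off one fan of triangles $\{a,v_i,v_{i+1}\},\{c,v_i,v_{i+1}\}$. Both rely on the no-pendant-edge property of $\ker\delta_0^T$, on Proposition \ref{prop:TC}, and on comparing a $\pm1$ coefficient to pin down the scalar. The differences are cosmetic: the paper applies the induction hypothesis to $(a,b,c,v_{s-1})$, adds the last fan $C_{s-1},C'_{s-1}$ directly, and handles $s=2$ by cancelling $\{b,d\}$ between the two triangle boundaries; you instead peel off the first fan, induct on $(a,v_2,c,d)$, and get the base case from Lemma \ref{lemma:cycle-split-over-chord}.
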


\begin{proof}
    In the following, $\delta_1$ will denote $\delta_{1,\FF}$ and $T$ will denote $T_{\FF}$. We use induction on $s$. For $i\in \{1,\ldots,s-1\}$, let $e_i=\{v_i,v_{i+1}\}$. If $s=2$, then $b \sim d$ and $e_1=\{b,d\}.$ Denote:
    \begin{gather*}
        X_1 := [\{a,b\}, \delta_1^T(C_1)]\cdot \delta_1^T(C_1) - [\{a,b\}, \delta_1^T(C_1)] \cdot [e_1, \delta_1^T(C_1)]\cdot [e_1, \delta_1^T(C_1')] \cdot \delta_1^T(C_1').  
    \end{gather*}
    Observe that in the $\im \delta_1^T \subset \ker \delta_0^T$ element $X_1$, the edge $\{a,b\}$ appears with coefficient $1$, and the term $e_1$ vanishes. Since the support of $X_1$ is the set of edges in the cycle $(a,b,c,d),$ $X_1=c'T(a,b,c,d)$ for some $c'.$ By comparing the coefficients of the edge $\{a,b\}$ in $X_1$ and $T(a,b,c,d),$ we conclude that $c=1$ and $X_1=T(a,b,c,d).$ Thus, the proposition is true when $s=2.$ 
    
    Assume the proposition holds for all $s' < s.$ There are constants $c_1,\dots,c_{s-2},c_1',\dots,c'_{s-2} \in \{-1,1\}$ such that:
    \begin{gather*}
        T(a,b,c,v_{s-1}) = \sum_{i=1}^{s-2} c_i\delta_1^T(C_i)+c_i'\delta_1^T(C_i').
    \end{gather*}
    Let $C'=(a,b,c,v_{s-1})$ and define:
    \begin{gather*}
        c_{s-1} = -[\{a,v_{s-1}\}, T(C')] \cdot [\{a,v_{s-1}\}, \delta_1^T(C_{s-1})], \text{ and}\\
        c'_{s-1} = -[\{c,v_{s-1}\}, T(C')] \cdot [\{c,v_{s-1}\}, \delta_1^T(C'_{s-1})].
    \end{gather*}
    Clearly, both $c_{s-1}, c'_{s-1} \in \{1,-1\}$. Consider now:
    \begin{gather*}
        Y = T(C') + c_{s-1} \delta_1^T(C_{s-1}) + c'_{s-1} \delta_1^T(C'_{s-1}).
    \end{gather*}
    By construction, the edges $\{a,v_{s-1}\}$ and $\{c,v_{s-1}\}$ vanish in $Y$. The support of $Y$ consists of the edges of the cycle $C$, along with possibly the edge $e_{s-1}.$ However, by construction, $Y \in \im \delta_1^T \subset \ker \delta_0^T,$ and consequently, $Y$ cannot have an edge with an endpoint not on any other edge in the support of $Y$. The edge $e_{s-1}$ must also vanish in $Y$. Hence, the support of $Y$ is precisely the set of edges in the cycle $C=(a,b,c,d),$ and $Y=c''T(a,b,c,d)$ for some constant $c''.$ Finally, we observe that the coefficient of the edge $\{a,v_1\}$ is $1$ in both $Y$ and $T(C).$ Therefore, we must have $Y=T(C).$ This completes the proof of the lemma.
\end{proof}

\begin{prop} \label{prop:conference-4-cycle}
    Let $G$ be a conference graph on $v \geq C_0'$ vertices, and $\FF$ a field. If $C=(a,b,c,d)$ is a cycle in $G$, then $T_{\FF}(C) \in \im \delta_{1,\FF}^T.$
\end{prop}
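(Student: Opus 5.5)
First dispose of the non-induced case: if $C=(a,b,c,d)$ has a chord, say $a\sim c$ or $b\sim d$, then Lemma \ref{lemma:cycle-split-over-chord} writes $T_{\FF}(C)=c_1T_{\FF}(C_1)+c_2T_{\FF}(C_2)$ with $C_1,C_2$ triangles. Each $T_{\FF}(C_i)=\pm\delta_{1,\FF}^T(C_i)$, so $T_{\FF}(C)\in\im\delta_{1,\FF}^T$. From now on we assume $C$ is induced, so $a\not\sim c$ and $b\not\sim d$. Then $b,d\in N_G(a)\cap N_G(c)$, and likewise $a,c\in N_G(b)\cap N_G(d)$.

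\textbf{Case 1: $b$ and $d$ lie in the same connected component of $G_{a,c}$.} Choose a path $b=v_1,v_2,\dots,v_s=d$ inside $G_{a,c}$. All its vertices lie in $N_G(a)\cap N_G(c)$, so Lemma \ref{lemma:path-in-common-neighborhood} applies directly. It expresses $T_{\FF}(C)$ as a $\pm1$-combination of the triangle boundaries $\delta_{1,\FF}^T(\{a,v_i,v_{i+1}\})$ and $\delta_{1,\FF}^T(\{c,v_i,v_{i+1}\})$. Hence $T_{\FF}(C)\in\im\delta_{1,\FF}^T$.

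\textbf{Case 2: $b$ and $d$ lie in different components of $G_{a,c}$.} Since $v\geq C_0'$, Theorem \ref{thm:conference-induced-4-cycle-across-vertices} says that $a$ and $c$ lie in the same component of $G_{b,d}$. We then reuse Case 1 with the cycle relabeled as $C'=(b,c,d,a)$. Lemma \ref{lemma:path-in-common-neighborhood} is stated for a cycle $(a,b,c,d)$ and a path from the second vertex to the fourth inside the common neighborhood of the first and third. For $C'$ the first and third vertices are $b,d$, and the path runs from $c$ to $a$ inside $G_{b,d}$; such a path exists by the theorem, and reversing it if needed gives the right orientation. This yields $T_{\FF}(C')\in\im\delta_{1,\FF}^T$.

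The one point needing care is relating $T_{\FF}(C')$ back to $T_{\FF}(C)$. Both are elements of $\ker\delta_{0,\FF}^T$ with the same edge support, namely the four edges of the cycle. By Proposition \ref{prop:TC}, $T_{\FF}(C)=\pm T_{\FF}(C')$; all coefficients are $\pm1$, so the nonzero constant is $\pm1$. Therefore $T_{\FF}(C)\in\im\delta_{1,\FF}^T$ as well. I expect no real obstacle: the substantive combinatorial work sits in Proposition \ref{prop:common-neighborhood-two-components} and Theorem \ref{thm:conference-induced-4-cycle-across-vertices}, and the rest is this bookkeeping of signs and rotated cycles.
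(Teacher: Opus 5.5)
Your proposal is correct and follows the paper's proof. You dispose of chords via Lemma \ref{lemma:cycle-split-over-chord}, then use Theorem \ref{thm:conference-induced-4-cycle-across-vertices} to get a path in $G_{a,c}$ or $G_{b,d}$, and finish with Lemma \ref{lemma:path-in-common-neighborhood}. The only difference is that you spell out the paper's ``without loss of generality'' by rotating to $(b,c,d,a)$ and invoking Proposition \ref{prop:TC} to get $T_{\FF}(C)=\pm T_{\FF}(C')$, which is valid (over a field any nonzero scalar would suffice).
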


\begin{proof}
    In the following, $\delta_1$ will denote $\delta_{1,\FF}^T$ and $T$ will denote $T_{\FF}$. If $a \sim c$ or $b \sim d,$ the proposition follows by Lemma \ref{lemma:cycle-split-over-chord} or the base case of Proposition \ref{lemma:path-in-common-neighborhood}. We now assume that $C$ is an induced cycle. By Theorem \ref{thm:conference-induced-4-cycle-across-vertices}, either $a,c$ are in the same connected component of $G_{b,d},$ or, $b,d$ are in the same connected component of $G_{a,c}.$ Without loss of generality, we may assume that $b,d$ are in the same connected component of $G_{a,c}.$ Thus, there are vertices $x_1=b,\dots,x_s=d$ in $N_G(a) \cap N_G(c)$ for some $s$ such that $x_i \sim x_{i+1},i=1,\dots,s-1.$ Denoting $C_i=\{a,x_i,x_{i+1}\}, C_i'=\{c,x_i,x_{i+1}\},i=1,\dots,s-1,$ by Lemma \ref{lemma:path-in-common-neighborhood}, there are constants $c_i,c_i' \in \{1,-1\} \subset \FF, i=1,\dots,s-1$ such that
    \begin{gather*}
         T(C)=\sum_{i=1}^{s-1} c_i\delta_1^T(C_i)+c_i'\delta_1^T(C_i').
    \end{gather*}
    This shows that $T(C) \in \im \delta_1^T.$ This completes the proof of the proposition. 
\end{proof}

We now present and prove the main theorem of the section.
\begin{theorem} \label{thm:vanishing-homology-conference-graphs}
    If $G$ is a conference graph on $v \geq C_0'$ vertices and $\FF$ is a field, then 
    \begin{equation*}
        \im \delta_{1,\FF}^T(G) = \ker \delta_{0,\FF}^T(G) \text{ and } \hlcl{G}{\FF}=0.
    \end{equation*}
\end{theorem}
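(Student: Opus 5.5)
The plan is an induction on the length of a cycle $C=(v_1,\dots,v_\ell)$: we show $T_{\FF}(C)\in\im\delta_{1,\FF}^T$ for every cycle, which suffices by Corollary \ref{cor:TCinduced}. We mirror the proof of Theorem \ref{thm:4-vertex-common-neighbor-srg}, except that the base case $\ell=4$ comes from Proposition \ref{prop:conference-4-cycle} rather than from a common-neighbor hypothesis. The case $\ell=3$ is trivial, and $\ell=4$ is exactly Proposition \ref{prop:conference-4-cycle}.

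For $\ell\geq 5$ there are two cases.
\begin{enumerate}
\item If $C$ has a chord, Lemma \ref{lemma:cycle-split-over-chord} writes $T(C)$ as a $\pm1$ combination of $T$ of two strictly shorter cycles, and induction applies.
\item If $C$ is induced, then $v_1\not\sim v_3$. In a conference graph $\mu=\frac{v-1}{4}\geq 1$, so $v_1,v_3$ have a common neighbor $x$. Since $C$ is induced and $\ell\geq 5$, the only vertex of $C$ adjacent to both $v_1$ and $v_3$ is $v_2$.
\end{enumerate}

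In the induced case I would choose $x\neq v_2$. This is possible because $\mu\geq 2$ for $v\geq 256$, and $x$ is then automatically off $C$. Applying Lemma \ref{lemma:cycle-split-over-path} to the path $(v_1,x,v_3)$ gives
\[
T(C)=c_1T(v_1,v_2,v_3,x)+c_2T(v_1,x,v_3,\dots,v_\ell).
\]
The first cycle has length $4$ and is handled by Proposition \ref{prop:conference-4-cycle}. The second cycle has length $\ell-1$ and is handled by induction.

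Alternatively, one can follow the paper's version and pick a common neighbor $x$ of $v_1$ and $v_4$. This is legitimate since $v_1\not\sim v_4$ when $\ell\geq 6$; it produces cycles of lengths $5$ and $\ell-2$, so $\ell=5$ then needs a separate treatment, which the $(v_1,v_3)$ choice avoids.

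The main obstacle is checking that the two cycles produced by Lemma \ref{lemma:cycle-split-over-path} are genuine cycles. This requires the path to meet $C$ only at its endpoints, i.e., $x\notin C$. That was arranged above: $x$ is a common neighbor of $v_1$ and $v_3$ distinct from $v_2$, and no other vertex of the induced cycle $C$ can be adjacent to both. Everything else is formal: strong induction on $\ell$ gives $\ker\delta_{0,\FF}^T\subseteq\im\delta_{1,\FF}^T$, and the reverse inclusion always holds, so $\hlcl{G}{\FF}=0$.
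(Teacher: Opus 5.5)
Your proof is correct, and it takes a simpler route than the paper's.

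\textbf{The paper's route.} The paper keeps $\ell=5$ as a separate base case. It resolves induced $5$-cycles by a case analysis specific to conference graphs:
either $x_1,x_3,x_4$ have a common neighbor, or they do not. In the second case, $\mu=k/2$ forces $N(x_1)\cap N(x_3)$ and $N(x_1)\cap N(x_4)$ to partition $N(x_1)$. Connectivity of $N(x_1)$, imported from the earlier paper, then supplies an edge $x_6\sim x_7$ across the partition. Only for $\ell\ge 6$ does the paper split along a common neighbor of $x_1$ and $x_4$, a step that needs just $\mu\ge 1$.

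\textbf{Your route.} You use $\mu\ge 2$ (indeed $\mu=\frac{v-1}{4}>16$, since $v\ge C_0'>65$) to choose a second common neighbor $x\ne v_2$ of $v_1$ and $v_3$. Because $v_2$ is the only vertex of an induced cycle of length at least $5$ adjacent to both, $x\notin C$. Lemma \ref{lemma:cycle-split-over-path} then peels off a $4$-cycle and leaves a cycle of length $\ell-1$.

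\textbf{What each buys.} Your route removes the $5$-cycle analysis entirely. It also shows that all the conference-graph-specific content lies in Proposition \ref{prop:conference-4-cycle}. More generally, your step shows that for any strongly regular graph with $\mu\ge 2$, $H_1$ vanishes as soon as every $4$-cycle is triangle-boundary generated. So the $5$-cycle hypothesis of Theorem \ref{thm:4-vertex-common-neighbor-srg} is redundant when $\mu\ge 2$. The paper's route buys uniformity with Theorem \ref{thm:4-vertex-common-neighbor-srg}, whose inductive step works with $\mu\ge 1$.

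\textbf{A small slip.} In your side remark, splitting along a common neighbor of $v_1$ and $v_4$ produces cycles of lengths $5$ and $\ell-1$, not $\ell-2$. The reason $\ell=5$ needs separate treatment on that route is that the $5$-cycle $(v_1,v_2,v_3,v_4,x)$ would be no shorter than $C$.
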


\begin{proof}
    The proof follows by combining Proposition \ref{prop:conference-4-cycle} with the arguments in the proof of Theorem 5.21 in \cite{cioaba-guo-ji-mim}. In the following, $\delta_1$ and $T$ will denote $\delta_{1,G,\FF}$ and $T_{\FF},$ respectively. All coefficients and equations below are over $\FF$, so we will drop the subscript $\FF$ for the rest of this proof. 
    
    We will prove that if $C=(x_1,\dots,x_{\ell})$ is an arbitrary cycle in $G$, then $T(C) \in \im \delta_1^T$. 
    
    We use strong induction on $\ell.$ We prove the cases $\ell \in \{3,4,5\}$ first. 
    
    If $\ell=3,$ the claim is immediate because $T(C)=\delta_1^T(C)$ or $T(C)=-\delta_1^T(C).$ If $\ell=4$, then by Proposition \ref{prop:conference-4-cycle}, $T(C)\in \im \delta_1^T$. 
    
    Now, consider $\ell=5.$ If $C$ is not an induced cycle, then Lemma \ref{lemma:cycle-split-over-chord} and the results in the previous paragraph imply that $T(C) \in \im \delta_1^T$. 
    
    We may now assume that $C$ is induced. We consider two cases. 
    
    Case 1: $N_G(x_1) \cap N_G(x_3) \cap N_G(x_4) \ne \emptyset.$
    
    Let $x_6$ be a common neighbor of $x_1,x_3,$ and $x_4.$ Denote the cycles:
    \begin{gather*}
        C_1 = (x_1,x_2,x_3,x_6), \ C_2 = (x_6,x_3,x_4), \ C_3=(x_6,x_4,x_5,x_1).
    \end{gather*}
    Also, denote
    \begin{gather*}
        c_2 = -[\{x_6,x_3\}, T(C_1)] \cdot [\{x_6,x_3\}, T(C_2)].
    \end{gather*}
    The term $\{x_6,x_3\}$ vanishes in the $\ker \delta_0^T$ element $X_1:=T(C_1)+c_2T(C_2).$ Hence, the support of $X_1$ consists of the edges of the cycle $C_4:=(x_1,x_2,x_3,x_4,x_6).$ However, the coefficient of the edge $\{x_1,x_2\}$ is $1$ in both $X_1$ and $T(C_4).$ Thus, we must have $X_1=T(C_4).$ Also, define:
    \begin{gather*}
        c_3 = -[\{x_6,x_4\}, T(C_4)] \cdot [\{x_6,x_4\}, T(C_3)].
    \end{gather*}
    In the $\ker \delta_0^T$ element $X_2:=T(C_4)+c_3T(C_3)$, the term $\{x_4,x_6\}$ vanishes, and thus, the support of $X_2$ consists of the edges of the cycle $C$, along with possibly the edge $\{x_1,x_6\}.$ However, the graph created by the support edges of a $\ker \delta_0^T$ element cannot have pendant edges, so, we must have that $X_2$ has support precisely the set of edges in the cycle $C.$ Finally, the edge $\{x_1,x_2\}$ appears with coefficient $1$ in both $X_2$ and in $T(C),$ so, we conclude that $T(C)=X_2=T(C_1)+c_2T(C_2)+c_3T(C_3).$ Finally, the cycles $C_1,C_2,C_3$ have lengths at most $4,$ and so, by the above discussions, $T(C_i)\in \im \delta_1^T, i=1,2,3.$ Consequently, $T(C) \in \im \delta_1^T.$ \par 
    
    Case 2: $N_G(x_1) \cap N_G(x_3) \cap N_G(x_4) = \emptyset.$ 
    
    In this case, $\big(N_G(x_1) \cap N_G(x_3)\big) \cap \big(N_G(x_1) \cap N_G(x_4)\big) = \emptyset$. Because $|N_G(x_1) \cap N_G(x_3)|=|N_G(x_1) \cap N_G(x_4)|=\mu=\frac{k}{2}=\frac{|N_G(x_1)|}{2},$ $N_G(x_1)\cap N_G(x_3)$ and $N_G(x_1) \cap N_G(x_4)$ partition $N_G(x_1).$ Because $v>9$ and \cite[Prop. 5.2]{cioaba-guo-ji-mim}, $N_G(x_1)$ is connected. Thus, there are $x_6 \in N_G(x_1) \cap N_G(x_3)$ and $x_7 \in N_G(x_1) \cap N_G(x_4)$ such that $x_6 \sim x_7$. We cannot have both $x_6=x_2$ and $x_7=x_5$ since $C$ is induced. Thus, at most one of $x_6=x_2$ and $x_7=x_5$ can hold. 
    
    We first consider the sub-case $x_6 \ne x_2$ and $x_7 \ne x_5.$  Define the cycles:
    \begin{gather*}
        C_1 = (x_1,x_2,x_3,x_6), \ C_2=(x_6,x_3,x_4,x_7), \ C_3 = (x_1,x_7,x_6),\ C_4=(x_7,x_4,x_5,x_1). 
    \end{gather*}
    Also, let
    \begin{gather*}
        c_2 = -[\{x_6,x_3\}, T(C_1)] \cdot [\{x_6,x_3\}, T(C_2)], \text{ and,}\\
        X_1 = T(C_1)+c_2T(C_2).
    \end{gather*}
    The term $\{x_6,x_3\}$ vanishes in $X_1.$ Hence, the support of $X_1$ is the set of edges in the cycle $C_5=(x_1,x_2,x_3,x_4,x_7,x_6)$. Because the edge $\{x_1,x_2\}$ appears with coefficient $1$ in both, $X_1$ and $T(C_5)$, we conclude that $X_1=T(C_5)$. Define now:
    \begin{gather*}
        c_3= -[\{x_6,x_7\}, T(C_5)] \cdot [\{x_6,x_7\}, T(C_3)], \text{ and,}\\
        X_2 = T(C_5)+c_3T(C_3).
    \end{gather*}
    In the $\ker \delta_0^T$ element $X_2,$ the term $\{x_6, x_7\}$ vanishes, and so, the support of $X_2$ consists of the set of edges in the cycle $C_6:=(x_1,x_2,x_3,x_4,x_7),$ along with possibly the edge $\{x_1,x_6\}.$ However, since $X_2 \in \ker \delta_0^T,$ the graph created by the support edges of $X_2$ cannot have a pendant edge. Thus, the support of $X_2$ must be the set of edges in $C_6.$ By comparing the coefficient of the edge $\{x_1,x_2\}$ in $X_2$ and $T(C_6)$ (which is $1$ in both cases), we conclude that $X_2 = T(C_6).$ Finally, we define:
    \begin{gather*}
        c_4 = -[\{x_4,x_7\}, T(C_6)] \cdot [\{x_4,x_7\}, T(C_4)], \text{ and,}\\
        X_3 = T(C_6) + c_4 T(C_4).
    \end{gather*}
    By a similar argument as above, we observe that the edge $\{x_4,x_7\}$ vanishes in the $\ker \delta_0^T$ element $X_3,$ and hence the support of $X_3$ must be the set of edges of the cycle $C.$ By observing that the coefficient of the edge $\{x_1,x_2\}$ is $1$ in both $X_3$ and $T(C),$ we conclude that $X_3=T(C).$ That is, 
    \begin{gather*}
        T(C) = T(C_1) + c_2T(C_2) + c_3 T(C_3) + c_4 T(C_4).
    \end{gather*}
    By the induction hypothesis, $T(C_i) \in \im \delta_1^T$ for $i\in \{1,2,3,4\},$ and consequently, $T(C) \in \im \delta_1^T$, as required.\par

    On the other hand, if one of $x_6=x_2$ and $x_7=x_5$ is true, we may assume without loss of generality that $x_7=x_5$ but $x_6 \neq x_2.$ Define the cycles:
    \begin{gather*}
        C_1 = (x_1,x_2,x_3,x_6), \quad C_2=(x_1,x_6,x_5), \quad C_3=(x_6,x_3,x_4,x_5). 
    \end{gather*}
    and coefficients $c_2,c_3 \in \{1,-1\}$ by:
    \begin{gather*}
        c_2 = -[\{x_1,x_6\}, T(C_1)] \cdot [\{x_1,x_6\}, T(C_2)],\quad c_3 = -[\{x_3,x_6\}, T(C_1)] \cdot [\{x_3,x_6\}, T(C_3)].
    \end{gather*}
    Also, let
    \begin{gather*}
        X = T(C_1) + c_2T(C_2) + c_3T(C_3) \in \ker(\delta_0^T).
    \end{gather*}
    By construction, the terms $\{x_1,x_6\}$ and $\{x_6,x_3\}$ vanish in $X$ and the support of $X$ consists of the edges of the cycle $C,$ along with possibly the edge $\{x_5,x_6\}.$ However, since $X \in \ker(\delta_0^T),$ the pendant edge $\{x_5,x_6\}$ cannot be in its support. The support of $X$ is the edge-set of the cycle $C$. Furthermore, the coefficient of the edge $\{x_1,x_2\}$ is $1$ in both $T(C)$ and in $X$ by construction. Thus, we conclude that
    \begin{gather*}
        T(C) = X =T(C_1) + c_2T(C_2) + c_3T(C_3).
    \end{gather*}
     By the induction hypothesis, $T(C_i) \in \im(\delta_1^T)$ for $ i\in \{1,2,3\}$. Thus, $T(C) \in \im(\delta_1^T)$ as well. This completes the proof for the case $\ell=5.$
    
    We now consider the case $\ell \geq 6.$ 
    
    If $C$ is not an induced cycle, then, without loss of generality, we assume that $x_1 \sim x_i$ for some $3 \leq i \leq \ell-1.$ By a similar argument as before, we can show that $T(C)=T(C_1)+c_2T(C_2),$ where
    \begin{gather*}
        C_1 = (x_1,x_2,\dots,x_i), \ C_2 = (x_i,x_{i+1},\dots,x_{\ell},x_1).
    \end{gather*}
    Here, both the cycles $C_1$ and $C_2$ have length at most $\ell-1,$ and thus, by the induction hypothesis, $T(C_i)\in \im \delta_1^T$ for $i\in \{1,2\},$ and consequently, $T(C) \in \im \delta_1^T.$ 
    
    If $C$ is an induced cycle, then, $x_1 \not \sim x_4$. There is some $x \in V(G), x \not = x_i, 1 \leq i \leq \ell,$ such that $x_1 \sim x \sim x_4.$ Denote the cycles
    \begin{gather*}
        C_1 = (x_1,x_2,x_3,x_4,x), \ C_2 = (x_1,x,x_4,x_5,\dots,x_{\ell}).
    \end{gather*}
    Using an argument similar to above, we can show that 
    \begin{gather*}
        T(C) = T(C_1)-[\{x_1,x\}, T(C_1)] \cdot [\{x_1,x\}, T(C_2)] \cdot T(C_2). 
    \end{gather*}
Here, both cycles $C_1$ and $C_2$ have lengths strictly less than $\ell$, and, by the induction hypothesis, $T(C_1), T(C_2) \in \im \delta_1^T$.
    Hence, $T(C) \in \im \delta_1^T$. This proves the claim every $\ell,$ and the proof of the theorem follows.
\end{proof}

\section{Strongly Regular Graphs With Smallest Eigenvalue $-2$}
\label{sec:strongly-regular-graphs-with-smallest-positive-eigenvalue--2}

In this section, in Theorem \ref{thm:nonvanishing-classification--2}, we prove that the only strongly regular graphs with smallest adjacency eigenvalue $-2$ and a non-vanishing $H_1$ over some field are the cycle $C_4$, the Petersen graph, the Shrikhande graph and the lattice graphs.

We recall Seidel's classification of strongly regular graphs with smallest adjacency eigenvalue $-2$.

\begin{theorem}[Theorem 9.2.1, \cite{BH}] \label{thm:seidel--2-classification}
If $G$ is a strongly regular graph with smallest eigenvalue $-2$, then $G$ is one of the following:
    \begin{enumerate}
        \item the complete $n$-partite graph $K_{n \times 2}$ with all partite sets of size two and parameters\\ $(v,k,\lambda,\mu)=(2n,2n-2,2n-4,2n-2), \ n\geq 2,$
        \item the lattice graph $L_2(n)=K_n \square K_n,$ with parameters $(v,k,\lambda,\mu)=(n^2,2(n-1),n-2,2), \ n \geq 3,$
        \item the triangular graph $T(n)$ with parameters $(v,k,\lambda,\mu)=(\binom{n}{2},2(n-2),n-2,4), \ n \geq 5,$
        \item the Shrikhande graph, with parameters $(v,k,\lambda,\mu)=(16,6,2,2),$
        \item one of the three Chang graphs, with parameters $(v,k,\lambda,\mu)=(28,12,6,4),$
        \item the Petersen graph, with parameters $(v,k,\lambda,\mu)=(10,3,0,1),$
        \item the Clebsch graph, with parameters $(v,k,\lambda,\mu)=(16,10,6,6),$
        \item the Schl{\"a}fli graph, with parameters $(v,k,\lambda,\mu)=(27,16,10,8).$
    \end{enumerate}
\end{theorem}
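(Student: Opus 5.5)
Since the statement is Seidel's classification, quoted from \cite[Theorem 9.2.1]{BH}, I would not reprove it from scratch here. Instead I would reconstruct it from the standard root-system argument of Cameron, Goethals, Seidel and Shult. The plan is as follows. Let $G$ be strongly regular with adjacency matrix $A$ and smallest eigenvalue $-2$. Then $A+2I$ is positive semidefinite, of rank $v-m$, where $m$ is the multiplicity of $-2$. Hence $A+2I$ is the Gram matrix of $v$ vectors in $\re^{v-m}$. Each vector has squared norm $2$, and pairwise inner products lie in $\{0,1\}$. These vectors generate a root lattice whose irreducible components are of type $A_n$, $D_n$, $E_6$, $E_7$ or $E_8$. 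The first step is to invoke this classification: $G$ is either a generalized line graph (the $D_n$ case) or is represented in the root system $E_8$.

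In the generalized line graph case, I would use regularity together with the constancy of $\lambda$ and $\mu$ to pin down the structure. If some "petal" (cocktail-party part) is present, the $\mu$-condition forces the whole graph to be a cocktail party graph $K_{n\times 2}$. Otherwise $G=L(H)$ is a genuine line graph. Regularity of $L(H)$ together with constant $\lambda$ forces $H$ to be regular or semiregular bipartite. The constant-$\mu$ condition then forces $H=K_n$, giving $T(n)$ with $n\ge 5$, or $H=K_{n,n}$, giving $L_2(n)$ with $n\ge 3$. Small degenerate cases must be checked by hand: for instance, $L(K_4)=K_{3\times 2}$ and $L(K_{3,3})$, and graphs that fail to be primitive.

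In the exceptional $E_8$ case, the vectors lie in $E_8$, so $v-m\le 8$. I would combine this rank bound with the eigenvalue multiplicity formulas for strongly regular graphs, whose smallest eigenvalue $-2$ fixes $\mu$ in terms of $k,\lambda$ via $(\theta+2)(\ldots)$ relations, and with the absolute bound. The aim is a finite parameter list: $(10,3,0,1)$, $(16,6,2,2)$, $(16,10,6,6)$, $(27,16,10,8)$, $(28,12,6,4)$, together with parameter sets that coincide with $T(8)$ and $L_2(4)$. The last step is to classify all graphs with these parameters. This uniqueness, or cospectral-mate, analysis yields the Petersen and Clebsch graphs, the Schl\"afli graph, the Shrikhande graph as the unique non-lattice graph with the parameters of $L_2(4)$, and the three Chang graphs as the non-triangular graphs with the parameters of $T(8)$.

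I expect the main obstacle to be the exceptional case, specifically the final enumeration of graphs realizing the finitely many parameter sets. I would attack it through switching classes (Seidel switching of the two-graph attached to $T(8)$ and $L_2(4)$) and by representing the vertices as roots in $E_7$ or $E_8$. In a paper of this kind I would simply cite \cite{BH} for this enumeration rather than redo it.
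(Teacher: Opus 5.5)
This matches the paper, which gives no proof of its own and simply quotes Seidel's theorem from \cite[Theorem 9.2.1]{BH}. Your outline is the standard Cameron--Goethals--Seidel--Shult root-system argument behind that result, and like the paper you defer the final enumeration of graphs to \cite{BH}.

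One precision: in the $E_8$ case, the bound $v-m\le 8$, the standard feasibility conditions and the absolute bound also leave the parameter sets $(9,4,1,2)$, $(10,6,3,4)$, $(15,8,4,4)$, $(21,10,5,4)$ and those of $K_{n\times 2}$ with $n\le 7$. Your list must include these; they are disposed of by the uniqueness of $L_2(3)$, $T(5)$, $T(6)$, $T(7)$, and by the fact that $\mu=k$ forces a complete multipartite graph.
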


We first deal with the complete multipartite graphs $K_{n\times 2}$.

\begin{prop} \label{prop:homology-k_nx2}
    If $\FF$ is a field, then
    \begin{enumerate}
        \item $\hlcl{K_{n \times 2}}{\FF} \cong \FF$ if $n=2,$ and,
        \item $\hlcl{K_{n \times 2}}{\FF} = 0$ for $n \geq 3.$
    \end{enumerate}
\end{prop}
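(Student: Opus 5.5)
For $n=2$, the graph $K_{2\times 2}$ is the $4$-cycle $C_4$, so it has no triangles. Hence $\idt{1,\FF}{K_{2\times2}}=0$. By Theorem \ref{thm:dim-cycle-space}, the cycle space has dimension $m-v+c=4-4+1=1$, and it is spanned by $T_{\FF}(C_4)$. Therefore $\hlcl{K_{2\times 2}}{\FF}=\kdt{0,\FF}{K_{2\times2}}\cong \FF$.

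For $n\geq 3$, I would apply Theorem \ref{thm:4-vertex-common-neighbor}. Label the parts $\{a_i,b_i\}$ for $1\le i\le n$, so that two vertices are adjacent exactly when they lie in different parts. The key observation is that any induced cycle of length at least four in $K_{n\times 2}$ must be a $4$-cycle of the form $(a_i,a_j,b_i,b_j)$.

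To see this, note that the complement of $K_{n\times2}$ is a perfect matching. An induced cycle on vertex set $S$ with $|S|\ge4$ must contain a non-edge, which is a pair $\{a_i,b_i\}$. Every other vertex of $S$ is adjacent to both $a_i$ and $b_i$. So if $|S|\geq 5$, any vertex of $S$ outside this pair has degree at least two in the cycle toward $\{a_i,b_i\}$ alone. Counting degrees then shows some vertex has degree greater than two, a contradiction. Equivalently, an induced cycle of length $\geq 5$ in a graph whose complement is a matching is impossible, because the complement of $C_\ell$ for $\ell\ge5$ has vertices of degree $\ell-3\geq 2$. For $\ell=4$, the complement of $C_4$ is two disjoint edges, which gives the stated form.

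Once this is established, take an induced $4$-cycle $(a_i,a_j,b_i,b_j)$. Since $n\ge3$, choose a third index $k\notin\{i,j\}$. Then $a_k$ is adjacent to all four vertices of the cycle. Thus every induced cycle has four consecutive vertices with a common neighbor, and Theorem \ref{thm:4-vertex-common-neighbor} gives $\hlcl{K_{n\times2}}{\FF}=0$. I would also note that Lemma \ref{prop:wheel4} handles the $4$-cycles directly.

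The only step requiring care is the classification of induced cycles, and the complement-degree argument settles it immediately. The rest is bookkeeping.
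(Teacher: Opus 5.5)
Your proposal is correct and follows the paper's proof. The case $n=2$ uses the same dimension count, $4-4+1=1$ with no triangles. For $n\ge 3$ you likewise show that the only induced cycles of length at least four are the $4$-cycles $(a_i,a_j,b_i,b_j)$, then apply Theorem \ref{thm:4-vertex-common-neighbor} with a vertex $a_k$ from a third part as the common neighbor. The only cosmetic difference is that you rule out longer induced cycles using the fact that the complement is a perfect matching, whereas the paper does it by listing the induced paths on three vertices.
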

\begin{proof}
    When $n=2$, $K_{n \times 2}$ is the cycle of length four. In this case, 
    $\dkdt{0,\FF}{K_{n \times 2}}=1, \didt{1,\FF}{K_{n \times 2}}=0,$ and hence $\hlcl{K_{n \times 2}}{\FF}\cong \FF.$ 
    
    If $n \geq 3$, then denote $G=K_{n \times 2}$. We may describe $G$ as:
    \begin{gather*}
        V(G) = \{a_i,b_i : 1\leq i\leq n\}, \quad E(G)=\bigcup_{1 \leq i<j \leq n} \{\{a_i,a_j\}, \{b_i,b_j\}, \{a_i,b_j\}, \{a_j, b_i\}\},
    \end{gather*}
    for some distinct vertices $a_1,\ldots,a_n,b_1,\ldots,b_n$.
    
    An induced path of length three in $G$ must have one of the following forms for some $i \ne j$:
    \begin{gather*}
        (a_i,a_j,b_i), \quad (a_i,b_j,b_i), \quad (b_i,b_j,a_i), \quad (b_i,a_j,a_i).
    \end{gather*}
    Consequently, an induced cycle of length four is of the form:
    \begin{gather*}
        (a_i,a_j,b_i,b_j), \quad i \ne j,
    \end{gather*}
    and $G$ has no induced cycle of length $5$ or more. For $i\ne j$, let $C=(a_i,a_j,b_i,b_j)$ be an induced $4$-cycle in $G$. Because $n\geq 3$, there is $k \in \{1,\dots,n\} \setminus \{i,j\},$ and both $a_k$ and $b_k$ are common neighbors of the vertices $a_i,a_j,b_i,b_j.$ Hence, every induced cycle in $G$ of length at least $4$ has $4$ consecutive vertices with a common neighbor in $G$. Thus, $G$ satisfies the hypothesis of Theorem \ref{thm:4-vertex-common-neighbor}, and we conclude that $\hlcl{G}{\FF}=0.$ 
\end{proof}

The following result was proved for $\FF=\re$ in \cite{cioaba-guo-ji-mim}.
\begin{prop} \label{prop:homology-T(n)}
Let $n \geq 5$ and $\FF$ a field. If $T(n)$ is the triangular graph on $\binom{n}{2}$ vertices, then $\hlcl{T(n)}{\FF}=0$.
\end{prop}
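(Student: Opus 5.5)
The plan is to apply Theorem \ref{thm:4-vertex-common-neighbor-srg}. The triangular graph $T(n)$ is strongly regular with parameters $\left(\binom{n}{2},2(n-2),n-2,4\right)$. So it suffices to show that every induced cycle of length $4$ or $5$ in $T(n)$ contains four vertices with a common neighbor. We identify the vertices of $T(n)$ with the $2$-subsets of $[n]$, two subsets being adjacent exactly when they share one element.

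For an induced $4$-cycle $(a,b,c,d)$, the argument parallels Proposition \ref{prop:steiner-4-cycle-common-neighbor}.
\begin{enumerate}
\item Since $a\sim b$, $a\sim d$ and $b\not\sim d$, the elements $a\cap b$ and $a\cap d$ are distinct. So $a=\{1,2\}$, $b=\{1,3\}$, $d=\{2,4\}$ with $3\neq 4$.
\item The vertex $c$ is disjoint from $a$ and meets both $b$ and $d$. This forces $c=\{3,4\}$.
\item The common neighbors of $a,b,c,d$ must meet each of $\{1,2\},\{1,3\},\{3,4\},\{2,4\}$. The candidates $\{1,4\}$ and $\{2,3\}$ both qualify, and neither is on $C$.
\end{enumerate}
Hence every induced $4$-cycle has a common neighbor of all its vertices.

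For an induced $5$-cycle $(a,b,c,d,e)$, we argue in the same way.
\begin{enumerate}
\item Normalize $a=\{1,2\}$, $b=\{1,3\}$, $e=\{2,4\}$.
\item The vertex $c$ meets $b$ but not $a$ or $e$. If $c\ni 1$ it would meet $a$, so $c\ni 3$. It also avoids $4$, so $c=\{3,5\}$ with $5$ a new element.
\item Symmetrically, $d=\{4,x\}$ with $x\notin\{1,2,3\}$. Since $d\sim c$ and $4\notin c$, the shared element must be $5$, so $d=\{4,5\}$.
\item The four consecutive vertices $e,a,b,c$ then have the common neighbor $\{2,3\}$: it meets $e$ in $2$, $a$ in $2$, $b$ in $3$, and $c$ in $3$.
\item The vertex $\{2,3\}$ is not on $C$. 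It is distinct from $d=\{4,5\}$, and it is adjacent to $e,a,b,c$, whereas $d$ is adjacent only to $c$ and $e$ among these.
\end{enumerate}
This case needs $n\ge 5$, which matches the hypothesis.

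With both checks done, Theorem \ref{thm:4-vertex-common-neighbor-srg} gives $\kdt{0,\FF}{T(n)}=\idt{1,\FF}{T(n)}$, that is, $\hlcl{T(n)}{\FF}=0$ for every field $\FF$. The only real work is the case analysis pinning down the forms of the cycles, and it is elementary. The step most likely to hide a slip is verifying that the chosen common neighbor is distinct from the cycle vertices. I would double-check this using the induced condition, for instance that $\{1,4\}$ is not adjacent to both $b$ and $d$ in a way that would force it onto $C$.
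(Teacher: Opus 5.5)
Your proof is correct, but it goes through a different vanishing criterion than the paper's proof does.

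\textbf{The paper's route.} The paper applies the general Theorem \ref{thm:4-vertex-common-neighbor}, so it must handle induced cycles of every length. It notes two facts:
\begin{itemize}
\item induced $4$-cycles have the form $(\{i,j\},\{j,k\},\{k,l\},\{i,l\})$, with common neighbor $\{i,k\}$;
\item every induced path on four vertices has the form $(\{i,j\},\{j,k\},\{k,l\},\{l,m\})$, with common neighbor $\{j,l\}$.
\end{itemize}
The second fact disposes of all induced cycles of length at least five at once. This route never uses strong regularity.

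\textbf{Your route.} You invoke Theorem \ref{thm:4-vertex-common-neighbor-srg} instead, so only lengths $4$ and $5$ need checking. The cost is that you rely on the global reduction in that theorem's proof, which really only uses $\mu>0$.

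\textbf{Two small points.} The local computations coincide with the paper's. Your common neighbor $\{2,3\}$ of $e,a,b,c$ is exactly the paper's $\{j,l\}$ for the induced path $(e,a,b,c)$.
\begin{itemize}
\item Determining $d=\{4,5\}$ is therefore unnecessary. The fact $\{2,3\}\neq d$ is automatic, since $d\not\sim a$ in an induced $5$-cycle while $\{2,3\}\sim a$.
\item Your remark that this case ``needs $n\ge 5$'' is not really a requirement. For smaller $n$ there are simply no induced $5$-cycles.
\end{itemize}

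\textbf{A connection to the paper.} $T(n)$ is the block graph of the trivial Steiner system $S(2,2,n)$. Your argument is precisely Propositions \ref{prop:steiner-4-cycle-common-neighbor} and \ref{prop:steiner-5-cycle-common-neighbor} specialized to $m=2$. Those proofs go through verbatim in that case, even though they are stated for $m\ge 3$.
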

\begin{proof}
    Denote $G=T(n)$. Any induced cycle $C$ of length four in $G$ must have the form:
    \begin{gather*}
        C=(\{i,j\}, \{j,k\}, \{k,l\}, \{i,l\})
    \end{gather*}
    for some pairwise distinct symbols $i,j,k,l \in \{1,\dots,n\}.$ For such a cycle $C$, the vertex $\{i,k\}$ of $G$ is a common neighbor of the vertices of $C.$ Hence, the vertices of any induced cycle of length four have a common neighbor in $G$. Furthermore, any induced path $P$ of length three in $G$ has the following form for some pairwise distinct symbols $i,j,k,l,m:$
    \begin{gather*}
        P=(\{i,j\},\{j,k\},\{k,l\},\{l,m\}). 
    \end{gather*}
    The vertex $\{j,l\}$ of $G$ is a common neighbor of the four vertices on $P.$ This implies that any four consecutive vertices on an induced cycle of length five or more have a common neighbor in $G$. Thus, $G$ satisfies the hypothesis of Theorem \ref{thm:4-vertex-common-neighbor}, and we conclude that $\hlcl{G}{\FF}=0.$ 
\end{proof}

When $\FF=\re$, the following result is implicit in \cite[Theorem 4.1]{cioaba-guo-ji-mim}.

\begin{prop} \label{prop:homology-lattice-graphs}
    If $G=K_n \square K_n, n \geq 2$, is the lattice graph on $n^2$ vertices and $\FF$ is a field, then
    \begin{gather*}
        \dim(\hlcl{G}{\FF})=n(n-2)+1>0.
    \end{gather*}
\end{prop}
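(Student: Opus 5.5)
The plan is to compute both $\dim \ker(\delta_0^T)$ and $\dim \im(\delta_1^T)$ for $G=K_n \square K_n$ directly and take the difference. By Theorem \ref{thm:dim-cycle-space}, since $G$ is connected with $n^2$ vertices and $n^2(n-1)$ edges, we get
\begin{gather*}
\dim \ker(\delta_{0,\FF}^T) = n^2(n-1)-n^2+1 = n^3-2n^2+1,
\end{gather*}
independently of $\FF$. Thus the claim reduces to showing $\dim \im(\delta_{1,\FF}^T) = n^3-2n^2+1 - (n^2-2n+1) = n^2(n-1) - n^2 - \dots$; concretely, we must show $\dim \im(\delta_{1,\FF}^T) = n\cdot 2\binom{n-1}{2} = n(n-1)(n-2)$.

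For the image, I would use the clique structure of $G$. Every triangle of $K_n\square K_n$ lies entirely in a single row or a single column, because a triangle using both a horizontal and a vertical edge would force a third edge that is neither. Hence the triangles split into $2n$ families, one for each row copy and each column copy of $K_n$, and these $2n$ copies of $K_n$ are pairwise edge-disjoint. Ordering rows and columns of $\delta_1^T$ accordingly, $\delta_{1,\FF}^T(G)$ becomes a block diagonal matrix $\bigoplus_{j=1}^{2n}\delta_{1,\FF}^T(K_n)$. Since $H_1(\clc{K_n},\FF)=0$ (the cone argument, or Theorem \ref{thm:4-vertex-common-neighbor} applied trivially since $K_n$ has no induced cycles of length $\geq 4$), we have $\dim \im \delta_{1,\FF}^T(K_n) = \dim\ker\delta_{0,\FF}^T(K_n) = \binom{n}{2}-n+1 = \binom{n-1}{2}$. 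Summing gives $\dim \im(\delta_{1,\FF}^T(G)) = 2n\binom{n-1}{2} = n(n-1)(n-2)$.

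Subtracting, $\dim \hlcl{G}{\FF} = n^3-2n^2+1 - (n^3-3n^2+2n) = n^2-2n+1 = n(n-2)+1$, which is positive for all $n\geq 2$. The only point requiring care is the block decomposition: one must check that a sign change from vertex reordering (handled as in the proof of Proposition \ref{prop:rectangular-4-cycle}, via diagonal $\pm1$ matrices) does not affect ranks, and that no triangle mixes row and column edges; both are straightforward, so I expect no genuine obstacle. For $n=2$ the row/column copies are single edges and the formula still gives $1$, consistent with $G=C_4$.
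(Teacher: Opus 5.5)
Your proposal is correct and follows the paper's proof essentially step for step. Like the paper, you get the cycle-space dimension $n^2(n-2)+1$ from Theorem \ref{thm:dim-cycle-space}, split the triangles into the $2n$ row and column copies of $K_n$ so that $\delta_{1,\FF}^T(G)$ is block diagonal, and subtract $2n\binom{n-1}{2}$. The only difference is that you obtain $\text{rank}\,\delta_{1,\FF}^T(K_n)=\binom{n-1}{2}$ from $\hlcl{K_n}{\FF}=0$ plus the cycle-space count, whereas the paper cites Meshulam--Newman--Rabinovich for this rank.
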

\begin{proof}
The graph $G$ has strongly regular graph parameters $(v,k,\lambda,\mu)=(n^2,2(n-1),n-2,2).$ Therefore, $\dim(\ker(\delta_{0,G,\FF}^T))=\frac{2n^2(n-1)}{2}-n^2+1=n^2(n-2)+1.$ 

Let $\T$ be the set of all triangles in $G,$. For $i\in \{1,\ldots,n\}$, let $\T_i$ and $\T_i'$ be the set of triangles in $G$ whose vertices are in the row $i$ and column $i$, respectively. 
Clearly, $\T_1,\ldots,\T_n,\T'_1,\ldots, \T'_n$ form a partition of $\T$. Thus, $\delta_{1,G,\FF}^T=\bigoplus_{i=1}^{2n} \delta_{1,K_n,\FF}^T.$ Using the fact that $\rnkg{\FF}{\delta_{1,K_n,\FF}^T}=\binom{n-1}{2}$ (see \cite[Thm. 8]{meshulam-newman-rabinovich} for example), we get that 
    \begin{gather*}
        \rnkg{\FF}{\delta_{1,G,\FF}^T}=(2n) \rnkg{\FF}{\delta_{1,K_n,\FF}^T}=2n\binom{n-1}{2}=n(n-1)(n-2).
    \end{gather*}
    Therefore,
    \begin{gather*}
        \dim(\hlcl{G}{\FF})=\dim(\ker(\delta_{0,G,\FF}^T))-\dim(\im(\delta_{1,G,\FF}^T)) \\=n^2(n-2)+1-n(n-1)(n-2)=n(n-2)+1,
    \end{gather*}
    as needed.
\end{proof}

If $M$ is an integer matrix and $\FF$ a field, we will denote by $\rankg{\FF}{M}$ the rank of the matrix $M$ viewed as a matrix over $\FF$. We now recall some basic facts regarding the Smith normal  Form of integer matrices. 

For a nonzero $m \times n$ integer matrix $M$, there is an integer $m\times m$ matrix $U$, an integer $m\times n$ diagonal matrix $D$, and an integer $n\times n$ matrix $V$ such that $M=UDV$ and $\det(U), \det(V) \in \{-1,1\}$. Also, the non-zero diagonal entries $d_1,\dots,d_r$ of $D$ satisfy $d_i | d_{i+1}$ for $i\in \{1,\ldots,r-1\}$. The integers $d_1,\dots,d_r$ are called the invariant factors of $M$, and are independent of $U$ and $V$. 

The following result is well-known, but, for the sake of completeness, we provide a short proof. See also \cite{BH} and \cite{stanley-smithnormalformcombinatorics} for other results on the Smith Normal Form.

\begin{lemma} \label{lemma:invariant-factor-and-f-rank}
    Let $M$ be a nonzero integer matrix with nonzero invariant factors $d_1, \dots, d_r.$ If $\FF$ is a field, then
    \begin{enumerate}
        \item if $\fchar{\FF}=0,$ $\rankg{\FF}{M}=r,$
        \item if $\fchar{\FF}=p$ for some prime $p$, 
        \begin{gather*}
            \rankg{\FF}{M}= | \{1 \leq i \leq r : p \text{ does not divide } d_i\}|.
        \end{gather*}
    \end{enumerate}
    In particular, if $d_i\in \{-1,1\}$ for $i\in \{1, \ldots, r\}$, then 
    \begin{gather*}
            \rankg{\FF}{M}=r.
    \end{gather*}
\end{lemma}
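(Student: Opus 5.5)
The plan is to reduce everything to the diagonal matrix $D$ in the Smith normal form $M=UDV$ recalled just above. Regard $U$, $D$, $V$ as matrices over $\FF$ by applying the canonical ring homomorphism $\Z\to\FF$ entrywise. This map sends $\Z$ onto the prime field: $\Q$ in characteristic $0$ (via the embedding of $\Z$), and $\FF_p$ in characteristic $p$ (via reduction mod $p$). Matrix multiplication commutes with this homomorphism, so $M=UDV$ still holds over $\FF$.

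Since $\det(U),\det(V)\in\{-1,1\}$ and the determinant commutes with ring homomorphisms, the images of $U$ and $V$ have determinant $\pm1\neq 0$ in $\FF$. Hence both are invertible over $\FF$. Multiplying on either side by an invertible matrix does not change rank, so $\rankg{\FF}{M}=\rankg{\FF}{D}$.

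The image of $D$ is still diagonal, and its rank over $\FF$ is the number of diagonal entries that are nonzero in $\FF$. The zero entries of $D$ stay zero. For each invariant factor $d_i$ there are two cases.
\begin{itemize}
\item If $\fchar{\FF}=0$, then $d_i\neq 0$ in $\FF$ because $\Z$ embeds into $\FF$. This gives rank $r$.
\item If $\fchar{\FF}=p$, then $d_i$ maps to $0$ exactly when $p\mid d_i$. This gives the count $|\{i : p\nmid d_i\}|$.
\end{itemize}
For the final claim, if every $d_i=\pm1$, then no prime divides any $d_i$, so both cases give rank $r$.

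There is no real obstacle. The only point that needs care is justifying that the integer factorization and the unimodularity of $U$ and $V$ survive the passage to $\FF$, and this is exactly the compatibility of determinants and products with the ring homomorphism $\Z\to\FF$.
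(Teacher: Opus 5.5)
Your proposal is correct and follows the same route as the paper: Smith normal form $M=UDV$, with $U,V$ unimodular and hence invertible over $\FF$, gives $\rankg{\FF}{M}=\rankg{\FF}{D}$, and you then count the $d_i$ that stay nonzero in $\FF$. The only difference is cosmetic: the paper first passes to the prime subfield using invariance of rank under field extension, while you read off the rank of the diagonal matrix directly over $\FF$, which makes that step unnecessary.
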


\begin{proof}
    We first observe that if $\FF_1 \subset \FF_2$ are arbitrary fields and $N$ a matrix over $\FF_1,$ then,
    \begin{gather*}
        \rnkg{\FF_1}{N} = \text{number of nonzero rows in a row-echelon form of $N$ as a matrix over $\FF_1$} \\
        = \text{number of nonzero rows in a row-echelon form of $N$ as a matrix over $\FF_2$}=\rnkg{\FF_2}{N}. 
    \end{gather*}
    Let $M$ be of size $m \times n,$ $D$ the $m \times n$ diagonal matrix with diagonal $(d_1,\dots,d_r,0,\dots,0),$ and $U,V$ square integer matrices of orders $m$ and $n$ respectively with $\det(U), \det(V) \in \{1,-1\}$ and $M=UDV.$ Then, $U^{-1}$ and $V^{-1}$ are both integer matrices, and thus, $U,V$ are invertible over $\FF.$ We then have:
    \begin{gather*}
        \rankg{\FF}{M}=\rankg{\FF}{UDV}=\rankg{\FF}{D}.
    \end{gather*}
    Now, if $\fchar{\FF}=0,$ then $d_1,\dots,d_r$ are nonzero elements of $\FF,$ and, $\rankg{\FF}{M}=\rankg{\FF}{D}=r.$ On the other hand, if $\fchar{\FF}=p$ for some prime $p$, then the prime subfield of $\FF$ is the field on $p$ elements, $\FF_p$. By the observation above, we have:
    \begin{gather*}
        \rnkg{\FF}{D}=\rnkg{\FF_p}{D}=| \{1 \leq i \leq r : p \text{ does not divide } d_i\}|.
    \end{gather*}
    This proves the first two statements of the lemma. The third statement follows, since if $d_i=1,i=1,\dots,r,$ then $| \{1 \leq i \leq r : p \text{ does not divide } d_i\}|=r,$ and in either case, we have $\rankg{\FF}{M}=\rankg{\FF}{D}=r.$
\end{proof}

\begin{corollary}
Let $G$ be a graph with $n$ vertices, $m$ edges, and $c$ connected components. If $\delta_{1,G,\Z}^T$ has exactly $m-n+c$ invariant factors, each of them being $1$ or $-1$, then $\hlcl{G}{\FF}=0$, for any field $\FF$.
\end{corollary}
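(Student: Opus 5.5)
The plan is to compute $\rankg{\FF}{\delta_{1,G,\Z}^T}$ directly from the invariant factors using Lemma \ref{lemma:invariant-factor-and-f-rank}, and then compare it with the dimension of the cycle space given by Theorem \ref{thm:dim-cycle-space}.

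Fix a field $\FF$. The matrix $\delta_{1,G,\FF}^T$ is the reduction of the integer matrix $\delta_{1,G,\Z}^T$ into $\FF$: through $\Q$ if $\fchar{\FF}=0$, or through $\FF_p$ if $\fchar{\FF}=p$, exactly as in the proof of Corollary \ref{corollary:rectangular-4-cycle-field}. So $\didt{1,\FF}{G}=\rankg{\FF}{\delta_{1,G,\Z}^T}$. By hypothesis the nonzero invariant factors are $d_1,\dots,d_r$ with $r=m-n+c$ and each $d_i\in\{1,-1\}$. The final statement of Lemma \ref{lemma:invariant-factor-and-f-rank} then gives $\rankg{\FF}{\delta_{1,G,\Z}^T}=r=m-n+c$ for every field $\FF$. (If $r=0$, then $m-n+c=0$ and the cycle space is trivial anyway.)

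By Theorem \ref{thm:dim-cycle-space}, $\dkdt{0,\FF}{G}=m-n+c$. Since $\delta_{0,\FF}^T\circ\delta_{1,\FF}^T=0$, we have $\idt{1,\FF}{G}\subseteq\kdt{0,\FF}{G}$. These two subspaces have the same finite dimension, so they are equal, and therefore $\hlcl{G}{\FF}=\kdt{0,\FF}{G}/\idt{1,\FF}{G}=0$.

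No step here is a real obstacle. The only point needing care is that the rank over $\FF$ of the reduced integer matrix is what the lemma computes; that is exactly the content of Lemma \ref{lemma:invariant-factor-and-f-rank}, together with the fact that signs $\pm1$ in the invariant factors remain units in every field.
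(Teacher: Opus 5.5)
Your proposal is correct and follows the paper's own argument. Lemma \ref{lemma:invariant-factor-and-f-rank} gives $\didt{1,\FF}{G}=m-n+c$ for every field, Theorem \ref{thm:dim-cycle-space} gives $\dkdt{0,\FF}{G}=m-n+c$, and so the inclusion $\idt{1,\FF}{G}\subseteq\kdt{0,\FF}{G}$ is an equality. You also spell out the inclusion and the degenerate case $r=0$, which the paper leaves implicit.
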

\begin{proof}
   Let $\FF$ be a field. The hypothesis and Lemma \ref{lemma:invariant-factor-and-f-rank} imply that 
    $$\rankg{\FF}{\delta_{1,G,\FF}^T}=\rankg{\FF}{\delta_{1,G,\Z}^T}=m-n+c=\dim(\ker(\delta_{0,G,\FF}^T)).$$ 
    Hence, $\hlcl{G}{\FF}=0$.
\end{proof}

\begin{prop} \label{prop:computational-data}
Let $G$ be a graph and $\FF$ be a field.

\begin{enumerate}
        \item If $G$ is the Shrikhande graph with parameters $(16,6,2,2)$, then\\
        $\dim(\hlcl{G}{\FF})=2$.
        
        \item If $G$ is any of the following graphs:
        \begin{enumerate}
            \item the Clebsch graph, with parameters $(v,k,\lambda,\mu)=(16,10,6,6),$
            \item the Schl{\"a}fli graph, with parameters $(v,k,\lambda,\mu)=(27,16,10,8)$,
            \item one of the three Chang graphs, with parameters $(v,k,\lambda,\mu)=(28,12,6,4),$
        \end{enumerate}
        then $\hlcl{G}{\FF}=0$, for every field $\FF:$
    \end{enumerate}
\end{prop}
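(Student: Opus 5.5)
The proposition is labeled ``computational data,'' so the plan is to reduce every field to a finite integer computation and then carry that computation out. Fix an ordering of the vertices of each graph and form the integer boundary matrix $\delta_{1,G,\Z}^T$, with rows indexed by edges and columns by triangles. Its Smith normal form gives invariant factors $d_1 \mid \dots \mid d_r$. By Lemma \ref{lemma:invariant-factor-and-f-rank}, $\rankg{\FF}{\delta_{1,G,\FF}^T}$ depends only on $\fchar{\FF}$ and on which $d_i$ are divisible by $p=\fchar{\FF}$. By Theorem \ref{thm:dim-cycle-space}, $\dkdt{0,\FF}{G}=m-n+1$ for every field, since all these graphs are connected. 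So
\begin{gather*}
\dim \hlcl{G}{\FF} = (m-n+1) - |\{i : p \nmid d_i\}|
\end{gather*}
(read as $(m-n+1)-r$ in characteristic $0$), and the whole statement reduces to computing $r$ and the invariant factors for finitely many explicit matrices.

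\textbf{Clebsch, Schläfli and Chang graphs.} The sizes are as follows.
\begin{itemize}
\item Clebsch: $m=80$, $n=16$, so $m-n+1=65$, with $80\cdot 6/3=160$ triangles.
\item Schläfli: $m=216$, $n=27$, so $m-n+1=190$, with $720$ triangles.
\item Chang graphs: $m=168$, $n=28$, so $m-n+1=141$, with $336$ triangles.
\end{itemize}
For each graph I will compute the Smith normal form, by computer algebra or by hand elimination exploiting symmetry. The goal is to verify that exactly $m-n+1$ invariant factors are nonzero and that all of them equal $1$; the Corollary following Lemma \ref{lemma:invariant-factor-and-f-rank} then gives vanishing over every field.

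A computation-free alternative is also worth trying. For the Schläfli graph ($\mu=8$, $\lambda=10$), first check whether every induced $4$- and $5$-cycle has four vertices with a common neighbor. If so, Theorem \ref{thm:4-vertex-common-neighbor-srg} finishes that case, and the same check might work for the Clebsch graph. The Chang graphs have $\mu=4$, so a case check is less likely to succeed there, and I expect to rely on the Smith normal form.

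\textbf{Shrikhande graph.} Here $m=48$, $n=16$, so $m-n+1=33$, and there are $48\cdot 2/3=32$ triangles. Because $\lambda=2$, each edge lies in exactly two triangles. The link of every vertex is a $6$-cycle, so $\clc{G}$ is the standard triangulation of the torus with $16$ vertices, $48$ edges and $32$ faces; its Euler characteristic is $0$. Since the clique complex has no $K_4$, it is $2$-dimensional and equals this torus. Simplicial homology of the torus gives $H_1(T^2,\Z)=\Z^2$, with no torsion in degree $1$, so $\dim \hlcl{G}{\FF}=2$ for every field. As a check, $\rankg{\FF}{\delta_1^T}=32-\dim H_2=31$, and $33-31=2$.

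\textbf{Main obstacle.} The main obstacle is certifying the unimodularity of the invariant factors for the Chang graphs and the Clebsch graph, which means excluding $2$-torsion in particular. I would first attempt to exhibit a spanning set of triangle boundaries with $\pm1$ coefficients for a basis of the cycle space, that is, a triangular unimodular minor of size $m-n+1$. Failing that, I would rely on an exact integer Smith normal form computation.
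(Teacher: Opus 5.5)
Your proposal is correct. For the Clebsch, Schl\"afli and Chang graphs your plan is exactly the paper's. You compute the integer Smith normal form of $\delta_{1,G,\Z}^T$ by computer, check that there are exactly $m-n+1$ nonzero invariant factors, all equal to $1$, and conclude via Lemma \ref{lemma:invariant-factor-and-f-rank} and Theorem \ref{thm:dim-cycle-space}. Your counts $65$, $190$, $141$ are the right targets. As in the paper, this part stands or falls with that computation. The paper does not use the optional common-neighbour route via Theorem \ref{thm:4-vertex-common-neighbor-srg}.

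The genuine difference is the Shrikhande graph. The paper treats it the same way, reporting a computer check that $\delta_{1,G,\Z}^T$ has $31$ nonzero invariant factors, all equal to $1$. You instead identify $\clc{G}$ with the $16$-vertex triangulation of the torus $\re^2/4\Z^2$, which is the Cayley graph of $\Z_4^2$ with connection set $\{\pm(1,0),\pm(0,1),\pm(1,1)\}$. You then read off $H_1(T^2,\Z)\cong\Z^2$, which is torsion-free, so universal coefficients give $\dim\hlcl{G}{\FF}=2$ for every field. Your route needs no computation and explains the paper's numerical output. Torsion-freeness of $H_1$ is equivalent to all nonzero invariant factors of $\delta_{1,G,\Z}^T$ being $1$, and $\dim H_2(T^2,\FF)=1$ accounts for the rank $31=32-1$.

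One point should be made explicit. The local facts you list are that every edge lies in two triangles, every vertex link is a hexagon (so there is no $K_4$), and $\chi=0$. These only show that $\clc{G}$ is a closed surface of Euler characteristic $0$, which could a priori be a Klein bottle. In that case $H_1(\cdot,\Z)\cong\Z\oplus\Z/2\Z$, and the dimension would depend on $\fchar{\FF}$. Orientability has to come from the explicit $\Z_4^2$ model, whose faces $\{x,x+(1,0),x+(1,1)\}$ and $\{x,x+(0,1),x+(1,1)\}$ are permuted by translations. The count of $32$ triangles then confirms that the clique triangles are exactly these lattice faces. Since you do assert it is the standard torus triangulation, this is a matter of stating the justification rather than a gap.
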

\begin{proof}

    Let $\FF$ be a field. 

    \begin{enumerate}
\item If $G$ is the Shrikhande graph, then its cycle space $\ker(\delta_{0,\FF}^T)$ has dimension $\dim(\ker(\delta_{0,\FF}^T))=|E(G)|-|V(G)|+1=33$. 
    
    By computer, we verified (see \cite{srg-sporadic-cases-code}) that $\delta_{1,G,\Z}^T$ has exactly $31$ nonzero invariant factors, each invariant factor being $1$. Hence, $\rnkg{\FF}{\delta_{1,G,\FF}^T}=31$ and $\dim(\hlcl{G}{\FF})=33-31=2$.

 \item   Now, let $G$ be one of the graphs listed in the statement of the Proposition. If the parameters of $G$ are $(v,k,\lambda,\mu)$, then the cycle space of $G$ has dimension $\dim(\ker(\delta_{0,G,\FF}^T))=\frac{vk}{2}-v+1$. 
 
 By computer, we verified that (see \cite{srg-sporadic-cases-code}) $\delta_{1,G,\Z}^T$ has exactly $\frac{vk}{2}-v+1$ nonzero invariant factors, each invariant factor being $1.$ Thus, $\rnkg{\FF}{\delta_{1,G,\FF}^T}=\frac{vk}{2}-v+1,$ and consequently,
    \begin{gather*}
        \dim(\hlcl{G}{\FF})=\dim(\ker(\delta_{0,G,\FF}^T))-\rnkg{\FF}{\delta_{1,G,\FF}^T}=\left(\frac{vk}{2}-v+1\right)-\left(\frac{vk}{2}-v+1\right)=0.
    \end{gather*}
\end{enumerate}
\end{proof}

\begin{theorem} \label{thm:nonvanishing-classification--2}
Let $\FF$ be a field. If $G$ is a strongly regular graph with smallest adjacency eigenvalue $-2$ such that $\hlcl{G}{\FF}\not = 0$, then $G$ must be one of the following graphs:
    \begin{enumerate}
        \item the cycle graph on $4$ vertices with parameters $(v,k,\lambda,\mu)=(4,2,0,2)$,
         \item the Petersen graph, with parameters $(v,k,\lambda,\mu)=(10,3,0,1).$
         \item the Shrikhande graph, with parameters $(v,k,\lambda,\mu)=(16,6,2,2),$       
        \item the lattice graph $L_2(n)=K_n \square K_n,$ with parameters $(v,k,\lambda,\mu)=(n^2,2(n-1),n-2,2), \ n \geq 3,$
        
        % \item the triangular graph $T(5)$ with parameters $(v,k,\lambda,\mu)=(10,6,3,4),$
        % \item one of the three Chang graphs, with parameters $(v,k,\lambda,\mu)=(28,12,6,4),$
       
        % \item the Clebsch graph, with parameters $(v,k,\lambda,\mu)=(16,10,6,6),$
        % \item the Schl{\"a}fli graph, with parameters $(v,k,\lambda,\mu)=(27,16,10,8).$
    \end{enumerate}
\end{theorem}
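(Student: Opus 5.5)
The plan is a direct elimination argument over Seidel's classification (Theorem \ref{thm:seidel--2-classification}). Let $G$ be strongly regular with smallest eigenvalue $-2$ and $\hlcl{G}{\FF}\neq 0$. Then $G$ is one of the eight listed types, and we rule out every case not on the list in the statement by quoting the vanishing results proved above.

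\textbf{Case $K_{n\times 2}$.} Proposition \ref{prop:homology-k_nx2} gives $\hlcl{K_{n\times 2}}{\FF}=0$ for $n\ge 3$. The only survivor is $n=2$, where $K_{2\times 2}=C_4$ with parameters $(4,2,0,2)$, which is item 1.

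\textbf{Cases $T(n)$, Clebsch, Schl\"afli, Chang.} For the triangular graphs $T(n)$, $n\ge 5$, Proposition \ref{prop:homology-T(n)} gives vanishing over every field. For the Clebsch graph, the Schl\"afli graph and the three Chang graphs, part 2 of Proposition \ref{prop:computational-data} gives vanishing over every field. So none of these can occur.

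\textbf{Remaining cases.} What is left is the lattice graphs $L_2(n)$ with $n\ge 3$, the Shrikhande graph and the Petersen graph, which are items 4, 3 and 2. For the lattice graphs, Proposition \ref{prop:homology-lattice-graphs} together with part 1 of Proposition \ref{prop:computational-data} show that these graphs really do have nonzero $H_1$. This is not needed for the implication being proved, but it confirms the list is sharp. The Petersen graph needs no vanishing argument: it is simply kept on the list. It does in fact have nonzero $H_1$, since it has no triangles and girth $5$, so $\im\delta_1^T=0$ while the cycle space has dimension $15-10+1=6$.

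\textbf{Main obstacle.} There is no real difficulty, because all the substantive work is in the cited propositions. The only care needed is bookkeeping: the lattice graph $L_2(2)$ coincides with $C_4=K_{2\times 2}$, so I take the lattice family with $n\ge 3$ and treat $C_4$ separately as item 1. I also need to check that each excluded family's vanishing result holds for every field $\FF$, not just for $\re$, which the cited propositions state explicitly.
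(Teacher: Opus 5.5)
Your proposal is correct and matches the paper's argument. Both run through Seidel's classification (Theorem \ref{thm:seidel--2-classification}) and discard $K_{n\times 2}$ for $n\ge 3$, $T(n)$, and the Clebsch, Schl\"afli and Chang graphs via Propositions \ref{prop:homology-k_nx2}, \ref{prop:homology-T(n)} and \ref{prop:computational-data}, which leaves $C_4$, the Petersen graph, the Shrikhande graph and the lattice graphs. Your sharpness remarks are not needed for the implication; note also that part 1 of Proposition \ref{prop:computational-data} concerns the Shrikhande graph, not the lattice graphs.
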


\begin{proof}
The result follows from Theorem \ref{thm:seidel--2-classification}, Proposition \ref{prop:homology-k_nx2}, Proposition \ref{prop:homology-T(n)}, and Proposition \ref{prop:computational-data}.
\end{proof}

\section{A Classification Theorem} \label{sec:a-classification-theorem}

 We recall the following theorems, see \cite[Thm. 8.6.4]{BvM}, \cite{KLYC}, or \cite{neumaier}.
\begin{theorem}[\cite{neumaier}, Theorem 5.1] \label{thm:neumaier-1}
    Let $m \geq 2$ be an integer. If $G$ is a strongly regular graph with $\lambda_{\min}(G)=-m$, then $G$ belongs to at least one of the following sub-families:
    \begin{enumerate}
        \item complete multi-partite graphs with one or more parts, each of size $m$,
        \item strongly regular graphs associated with orthogonal arrays $\oa{m}{n}$ for some $n$ (note that Neumaier calls this class Latin square graphs and uses the notation $LS_m(n)$),
        \item block graph of Steiner systems $S(2,m,n)$ for some $n$, (note that Neumaier denotes this class by $S_m(n)$),
        \item a finite family $E_m,$ which we will call the exceptional family.
    \end{enumerate}
\end{theorem}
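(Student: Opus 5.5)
Since this is Neumaier's theorem, quoted from \cite{neumaier}, the paper itself would just cite it. If I had to reconstruct a proof, I would follow Neumaier's claw-bound strategy. Throughout, $G$ is strongly regular with parameters $(v,k,\lambda,\mu)$ and eigenvalues $k>r>-m$. Recall the standard identities
\[
\lambda=k+r-m+rm...
\]
more precisely $\mu=k-rm$... I will use them in the form $\mu=k+r(-m)$ and $\lambda=\mu+r-m$ (the usual $\mu=k+rs$, $\lambda=\mu+r+s$ with $s=-m$).

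\textbf{Step 1: dispose of the imprimitive case.} If $\mu=k$, then nonadjacency is an equivalence relation. So $G$ is complete multipartite, and $s=-m$ forces every part to have size $m$; this is family (1). From now on I assume $G$ is primitive, so $r\geq 1$ and $\mu<k$.

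\textbf{Step 2: the claw bound.} This is the heart of the argument and the step I expect to be the main obstacle. The target is Neumaier's inequality: if $\mu\neq m^2$ and $\mu\neq m(m-1)$, then
\[
r\leq \tfrac{1}{2}m(m-1)(\mu+1)-1 .
\]
The plan for proving it is as follows.
\begin{itemize}
\item Count claws $K_{1,m+1}$. The Gram matrix $A+mI$ is positive semidefinite, so $G$ has no induced $K_{1,t}$ for $t$ large relative to $m$; a claw of size $m+1$ centred at $x$ must satisfy a rigid local condition.
\item Fix an edge $xy$ and look at the $\lambda$ common neighbours, with $k-\lambda-1$ neighbours of $x$ outside. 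Apply the absence of large claws together with the fact that the restriction of $A+mI$ has rank at most the multiplicity of $-m$.
\item Combine these to get a quadratic inequality in $r$ whose extremal cases are exactly $\mu=m^2$ and $\mu=m(m-1)$. Away from those two values this forces the stated bound on $r$.
\end{itemize}
I would first try to reproduce Neumaier's counting argument on $N(x)$ via the positive semidefiniteness of $A+mI$.

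\textbf{Step 3: the geometric case.} Suppose $r$ exceeds the claw bound, so $\mu\in\{m^2,m(m-1)\}$. I would show $G$ is geometric.
\begin{itemize}
\item Large $r$ gives large $\lambda$. Neighbourhoods then split into $m$ cliques, and Delsarte's bound says a clique has size at most $1+k/m$; I would show these cliques attain it.
\item Every edge lies in a unique maximal clique, each vertex lies in exactly $m$ such cliques, and two cliques meet in at most one vertex. This is a partial geometry $pg(K,m,\alpha)$ with $\alpha=\mu/m$.
\item If $\alpha=m$, i.e.\ $\mu=m^2$, two nonadjacent vertices see every line, so the points of the dual structure form a Steiner system $S(2,m,n)$, and $G$ is its block graph: family (3).
\item If $\alpha=m-1$, i.e.\ $\mu=m(m-1)$, the geometry is a net of order $n$ with $m$ parallel classes. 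This is an $\oa{m}{n}$ and $G$ is its graph: family (2).
\end{itemize}

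\textbf{Step 4: finiteness of the remainder.} In the remaining case, $r$ is bounded in terms of $m$ and $\mu$. I would then check that, for fixed $m$, only finitely many feasible parameter sets exist.
\begin{itemize}
\item Since $\mu=k-rm\leq$ a function of $r$ and $m$ (e.g.\ via $\mu\ge 1$ and the Krein/absolute bounds), the bound on $r$ becomes a bound depending on $m$ alone.
\item $k$ and $v$ are then determined by $(r,m,\mu)$.
\end{itemize}
The finitely many surviving graphs constitute the exceptional family $E_m$, family (4). The delicate point here is making the bound on $r$ independent of $\mu$; if the direct substitution fails, I would use the integrality of the eigenvalue multiplicities and the absolute bound $v\le f(f+3)/2$ to close the argument.
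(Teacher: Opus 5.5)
The paper gives no proof of this statement; it is quoted from Neumaier, so your outline has to stand on its own. The architecture is right. Step 1 and the final identifications in Step 3 are correct: for $\alpha=m$, the reason is that every line through a point off $L$ meets $L$, so any two lines meet and the dual is an $S(2,m,n)$.

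\textbf{Step 2 misdescribes the mechanism.} The claw count is linear, not a quadratic with extremal cases $\mu=m^2,m(m-1)$. Suppose $x$ has pairwise non-adjacent neighbours $y_1,\dots,y_{m+1}$. Each $y_i$ has $\lambda$ neighbours in $N(x)$, and any two share at most $\mu-1$ there, so $(m+1)\lambda-\binom{m+1}{2}(\mu-1)\le k-m-1$. With $\lambda=\mu+r-m$ and $k=\mu+rm$ this is exactly $r\le\tfrac12 m(m-1)(\mu+1)-1$. So above that threshold $G$ has no induced $K_{1,m+1}$. That is the input to Step 3, not a dichotomy on $\mu$.

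A bound on $r$ with precisely the two exceptions comes instead from integrality. Since $f=\frac{(m-1)k(k+m)}{\mu(r+m)}\in\mathbb{Z}$ and $k\equiv\mu-m^2\pmod{r+m}$, you get $(r+m)\mid(m-1)(\mu-m^2)(\mu-m(m-1))$. Also, $A+mI$ has rank $v$ minus the multiplicity of $-m$, not at most that multiplicity.

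\textbf{Step 3 is the genuine gap.} You need that $N(x)$ is covered by $m$ cliques which are Delsarte cliques. This is the combinatorial heart of the theorem: a Bruck--Bose-type grand-clique argument using $K_{1,m+1}$-freeness, large $\lambda$, and the bound $\mu-1$ for non-adjacent pairs. You give no argument for it. Parameters alone cannot supply it: the Shrikhande graph has the parameters of $L_2(4)$, and the Chang graphs have those of $T(8)$, the block graph of $S(2,2,8)$.

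\textbf{Step 4 as written is circular.} The claw bound bounds $r$ linearly in $\mu$. Bounds $\mu\le g(r,m)$ from $\mu\le k$ or from Krein grow with $r$. For instance, the Krein condition $(r+1)(k+r+2rs)\le(k+r)(s+1)^2$ reads $\mu\,(r+1-(m-1)^2)\le r(m-1)(r+m^2)$, roughly $\mu\le(m-1)r$. Together the two inequalities allow $r,\mu\to\infty$. What you need is a bound on $\mu$ in terms of $m$ alone; Neumaier proves one separately ($\mu\le m^3(2m-3)$).

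Your fallback can be made to work, but you have to carry it out. One way:
\begin{itemize}
\item For $r\ge 2m^4$, Krein gives $\mu\le 2(m-1)(r+m^2)$. The absolute bound $v\le\frac12 f(f+3)$, rewritten as $f+3\ge\frac{2(k-r)(r+m)}{(m-1)k}\ge\frac{2(r+m)}{m}$, then forces $\mu<m^3(m-1)$.
\item For $r<2m^4$, the complement has valency $(r+1)(m-1)k/\mu\le(r+1)(m-1)(rm+1)$ and diameter $2$, so $v$ is bounded.
\end{itemize}
Combined with the divisibility above, this leaves finitely many parameter sets with $\mu\notin\{m^2,m(m-1)\}$. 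It also leaves finitely many with those two values of $\mu$ and $r$ below whatever threshold Step 3 needs.
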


\begin{theorem}[\cite{neumaier}, Theorem 5.2] \label{thm:neumaier-2}
    Any strongly regular graph that does not belong to one of the above sub-families for some $m$ belongs to at least one of the following families:
    \begin{enumerate}
        \item conference graphs,
        \item disjoint union of cliques of the same size.
    \end{enumerate}
\end{theorem}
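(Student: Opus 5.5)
The statement is Neumaier's Theorem 5.2, quoted from \cite{neumaier}; here is how I would derive it from Theorem \ref{thm:neumaier-1} together with the standard eigenvalue theory of strongly regular graphs. Let $G$ be strongly regular with parameters $(v,k,\lambda,\mu)$.

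\emph{Degenerate cases.} First dispose of these separately. If $G$ is disconnected, then $\mu=0$, so adjacency is an equivalence relation and $G$ is a disjoint union of cliques of equal size; this is family~2. The edgeless graph and the complete graph $K_v$ are also disjoint unions of cliques, with one clique in the latter case. So we may assume $G$ is connected and neither complete nor edgeless, i.e.\ $0<k<v-1$ and $\mu>0$.

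\emph{Integrality dichotomy.} The nontrivial eigenvalues $r>s$ are the roots of $x^2-(\lambda-\mu)x-(k-\mu)=0$. Their multiplicities are
\begin{gather*}
f,g=\frac12\left((v-1)\mp\frac{2k+(v-1)(\lambda-\mu)}{\sqrt{\Delta}}\right), \quad \Delta=(\lambda-\mu)^2+4(k-\mu).
\end{gather*}
Since $f$ and $g$ are integers, there are two cases.

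\textbf{Case 1: $2k+(v-1)(\lambda-\mu)\neq 0$.} Then $\sqrt{\Delta}$ is rational. As $\Delta$ is an integer, $\sqrt{\Delta}$ is an integer, so $r$ and $s$ are rational roots of a monic integer polynomial, hence integers.

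\textbf{Case 2: $2k+(v-1)(\lambda-\mu)=0$.} Then $f=g=(v-1)/2$. From $0<2k<2(v-1)$ we get $0<\mu-\lambda<2$, so $\mu=\lambda+1$ and $k=(v-1)/2$. The trace condition $k+fr+gs=0$ then forces $r+s=-1$. Combining this with $r+s=\lambda-\mu$ and $rs=\mu-k$ yields the conference parameters $\lambda=(v-5)/4$ and $\mu=(v-1)/4$, i.e.\ family~1.

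\emph{Conclusion.} It remains to treat Case~1, where the smallest eigenvalue is an integer $-m$. Because $G$ has an edge, the smallest eigenvalue is at most $-1$, so $m\ge 1$. If $m\ge 2$, Theorem \ref{thm:neumaier-1} places $G$ in one of the listed sub-families, contradicting the hypothesis. If $m=1$, then $A+I$ is positive semidefinite. Its principal minor on an induced path $P_3$ equals $0$, and the matrix $A+I$ restricted to $P_3$ has eigenvalue $1-\sqrt2<0$, so $G$ has no induced $P_3$. Hence $G$ is a disjoint union of cliques.

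The main obstacle is the arithmetic in Case~2, namely showing that irrational eigenvalues force exactly the conference parameters. I would handle it by solving the trace identity together with the two relations $r+s=\lambda-\mu$ and $rs=\mu-k$ explicitly. The rest is bookkeeping around the degenerate graphs.
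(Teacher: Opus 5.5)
The paper does not prove this statement; it quotes it from Neumaier and uses it as a black box in the proof of Theorem~\ref{thm:classification-theorem-1}. Your outline is the standard derivation: degenerate cases, the half-case/integral dichotomy from the multiplicity formula, Theorem~\ref{thm:neumaier-1} for $m\ge 2$, and an induced-$P_3$ argument for $m=1$. However, the step you flagged as the main obstacle fails as planned. The trace identity $k+fr+gs=0$ is exactly the equation you solved, together with $f+g=v-1$, to get the multiplicity formula. Substituting $f=g=k$ back into it therefore only returns $r+s=-1$, which you already have from $r+s=\lambda-\mu$ and $\mu=\lambda+1$. The relation $rs=\mu-k$ just expresses the unknown $rs$ through the unknown $\mu$. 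So your three relations are satisfied by a one-parameter family and cannot force $\mu=(v-1)/4$. For instance, with $v=13$, $k=6$, $\mu=2$, $\lambda=1$ they all hold, with $r,s=(-1\pm\sqrt{17})/2$.

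You need an independent identity. One option is the counting relation $k(k-\lambda-1)=\mu(v-k-1)$, obtained by counting edges between $N(x)$ and the non-neighbours of $x$. Since $k=(v-1)/2$ gives $v-k-1=k$, it yields $k-\lambda-1=\mu$. With $\lambda=\mu-1$ this gives $k=2\mu$, so $\mu=(v-1)/4$ and $\lambda=(v-5)/4$. The other option is the second-moment identity $\mathrm{tr}(A^2)=vk$, i.e.\ $k^2+fr^2+gs^2=vk$. With $f=g=k$ it gives $v=k+r^2+s^2=k+1-2(\mu-k)$, and $v=2k+1$ again forces $k=2\mu$.

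There are also two smaller points. First, the principal submatrix of $A+I$ on an induced $P_3$ has determinant $-1$, not $0$. Your eigenvalue observation does the actual work: the eigenvalue $1-\sqrt2$ is negative, and principal submatrices of a positive semidefinite matrix are positive semidefinite. So drop the minor claim. In fact, a connected non-complete strongly regular graph has diameter $2$ and hence an induced $P_3$, so the case $m=1$ cannot occur there at all. Second, before dividing by $\sqrt{\Delta}$, note that $\Delta>0$. This holds because $\mu\le k$ and $\lambda\le k-1$, so $\Delta=0$ would force $\lambda=\mu=k$. With these repairs the argument is complete.
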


We will also need the following proposition.

\begin{prop} \label{prop:homology-complete-multipartite}
    Let $G$ be a complete multipartite graph with $r \geq 1$ parts, each of size $m \geq 3$. Let $\FF$ be a field. Then,
    \begin{enumerate}
        \item if $r=1,$ then $G$ is an edgeless graph on $m$ vertices, and $\hlcl{G}{\FF}=0,$
        \item if $r=2,$ then $G$ is a complete bipartite graph $K_{m,m}$, and $\dim(\hlcl{G}{\FF})=(m-1)^2.$ In particular, $\hlcl{G}{\FF} \not = 0,$
        \item if $r \geq 3,$ then $\hlcl{G}{\FF}=0.$
    \end{enumerate}
\end{prop}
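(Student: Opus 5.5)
The three cases are handled separately.

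\textbf{Case $r=1$.} The graph has no edges, so $\kdt{0,\FF}{G}=0$, hence $\hlcl{G}{\FF}=0$ trivially.

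\textbf{Case $r=2$.} Here $G=K_{m,m}$ is bipartite, hence triangle-free, so $\clc{G}$ has no $2$-simplices and $\idt{1,\FF}{G}=0$. Therefore
\begin{gather*}
\dim(\hlcl{G}{\FF})=\dkdt{0,\FF}{G}.
\end{gather*}
By Theorem \ref{thm:dim-cycle-space}, with $2m$ vertices, $m^2$ edges and one connected component, this equals $m^2-2m+1=(m-1)^2$. This is positive since $m\geq 3$.

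\textbf{Case $r\geq 3$.} I will verify the hypothesis of Theorem \ref{thm:4-vertex-common-neighbor}: every induced cycle of length at least four has four consecutive vertices with a common neighbor.

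Two vertices are adjacent exactly when they lie in different parts, so non-adjacency is the equivalence relation ``same part''. Let $C=(v_1,\dots,v_\ell)$ be an induced cycle with $\ell\geq 4$.

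First I show $\ell=4$. Since $C$ is induced, $v_1\not\sim v_3$, so $v_1,v_3$ lie in the same part $P$. If $\ell\geq 5$, then $v_1\not\sim v_4$ as well (they are non-consecutive), so $v_4\in P$ too. But then $v_3$ and $v_4$ lie in the same part, contradicting $v_3\sim v_4$. Hence $\ell=4$.

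Now $C=(a,b,c,d)$ with $a,c$ in one part $P$ and $b,d$ in one part $Q$. Because $a\sim b$, the parts are distinct: $P\neq Q$. Since $r\geq 3$, there is a third part $R$, and any vertex $w\in R$ is adjacent to all of $a,b,c,d$. This $w$ is the required common neighbor of four consecutive vertices.

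So Theorem \ref{thm:4-vertex-common-neighbor} applies and gives $\hlcl{G}{\FF}=0$. Equivalently, the $\ell=4$ case can be finished directly from Lemma \ref{prop:wheel4} together with Corollary \ref{cor:TCinduced}.

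The only step requiring care is the argument ruling out induced cycles of length at least $5$. It relies entirely on non-adjacency being transitive in a complete multipartite graph. Everything else is routine.

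Note that the hypothesis $m\geq 3$ is used only to make the count $(m-1)^2$ positive. The vanishing argument for $r\geq 3$ works for any part size.
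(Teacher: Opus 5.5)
Your proposal is correct and follows the paper's proof essentially step for step: the trivial case $r=1$, the count $(m-1)^2$ from Theorem~\ref{thm:dim-cycle-space} for the triangle-free $K_{m,m}$, and, for $r\geq 3$, the hypothesis of Theorem~\ref{thm:4-vertex-common-neighbor} checked with a vertex from a third part. Your transitivity-of-non-adjacency argument excluding induced cycles of length at least five supplies a step the paper only asserts, and you correctly put $b,d$ in the second part, where the paper has the typo ``$c,d$''.
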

\begin{proof}
The case $r=1$ is immediate because $\didt{1,\FF}{G}=\dkdt{0,\FF}{G}=0.$ 

If $r=2$, then Theorem \ref{thm:dim-cycle-space} implies that $\dkdt{0,\FF}{G}=m^2-2m+1=(m-1)^2 > 0$, while $\didt{1,\FF}{G}=0$ since $G$ has no triangles. Hence,
    \begin{gather*}
        \dim(\hlcl{G}{\FF}) = \dkdt{0,\FF}{G}-\didt{1,\FF}{G}=(m-1)^2.
    \end{gather*}

If $r \geq 3$, then the proof is similar to the one of Proposition \ref{prop:homology-k_nx2}. Any induced $4$-cycle of $G$ has the form $(a,b,c,d),$ where $a,c$ belong to the same partite set in $G$ while $c,d$ belong to a different partite set of $G$. Since $r \geq 3,$ we may choose a vertex from any other part that is a common neighbor of $a,b,c,d$. Also, $G$ contains no induced cycle of length $5$ or more. Thus, the hypothesis of Theorem \ref{thm:4-vertex-common-neighbor} holds for $G$, and we conclude that $\hlcl{G}{\FF}=0.$
\end{proof}

We now present the main result of this section.

\begin{theorem} \label{thm:classification-theorem-1}
    Let $G$ be a strongly regular graph and $\FF$ a field. If $\hlcl{G}{\FF} \not = 0$, then $G$ must be one of the following graphs:
    \begin{enumerate}
       \item the Petersen graph, with parameters $(v,k,\lambda,\mu)=(10,3,0,1),$
       \item the Shrikhande graph, with parameters $(v,k,\lambda,\mu)=(16,6,2,2),$
       \item an element of the exceptional family $E_m$, for some $m \geq 3$, described in the Neumaier's classification,
       \item a conference graph $G(v, \frac{v-1}{2}, \frac{v-5}{4}, \frac{v-1}{4})$ for some $v \leq 255$,
       \item a complete bipartite graph with each part of size $n$ and parameters $(2n, n, 0, n),$
       \item a lattice graph $L_2(n)=K_n \square K_n,$ with parameters $(v,k,\lambda,\mu)=(n^2,2(n-1),n-2,2), \ n \geq 3$.
    \end{enumerate}
\end{theorem}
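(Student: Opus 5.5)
The plan is a case analysis driven by Neumaier's classification. We argue by contraposition: take a strongly regular graph $G$ and a field $\FF$ with $\hlcl{G}{\FF}\neq 0$. First, $G$ cannot be a disjoint union of cliques, since its clique complex is then a disjoint union of simplices and $H_1$ vanishes. If $G$ is a conference graph, Theorem \ref{thm:vanishing-homology-conference-graphs} (with $C_0'=256$, as in the Remark) forces $v\le 255$, which is item 4. Otherwise, by Theorem \ref{thm:neumaier-2}, $G$ lies in one of the subfamilies of Theorem \ref{thm:neumaier-1} for $m=-\lambda_{\min}(G)\ge 2$. In that case $\lambda_{\min}$ must be an integer: a non-integral smallest eigenvalue forces the conference case, which we have already handled.

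For $m=2$ we use Theorem \ref{thm:nonvanishing-classification--2} directly. The survivors are $C_4=K_{2,2}$, which falls under item 5 with $n=2$, the Petersen graph, the Shrikhande graph, and the lattice graphs $L_2(n)$ with $n\ge3$. These give items 1, 2, 5 and 6.

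For $m\ge3$ we go through the four subfamilies of Theorem \ref{thm:neumaier-1}.

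\textbf{Complete multipartite graphs.} When the parts have size $m\ge3$, Proposition \ref{prop:homology-complete-multipartite} shows that only $r=2$ survives. This gives $K_{m,m}$, which is item 5.

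\textbf{Orthogonal array graphs $\oa{m}{n}$ with $m\ge3$.} Theorem \ref{theorem:vanishing-h1-orthogonal-array} gives vanishing $H_1$ whenever $n\ge4$, so only small $n$ need attention. For $n\le 3$ with $m\ge3$, we will check that the graph is degenerate. The parameters of an $\oa{m}{n}$ graph require $m\le n+1$. The case $\oa{3}{2}$ gives $K_4$, or rather $K_{2,2,2}$-like small complete multipartite graphs. The case $\oa{3}{3}$ is the Paley graph $P(9)\cong L_2(3)$. The case $\oa{4}{3}$ is $K_9$, which is not a genuine strongly regular graph, or is complete. In each small case we either identify an earlier item or verify $H_1=0$ directly with Theorem \ref{thm:4-vertex-common-neighbor}. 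This small-$n$ bookkeeping is the main obstacle, because the orthogonal array theorem was stated only for $n\ge 4$. What I would try first is to show that each such graph is complete multipartite or has smallest eigenvalue $-2$, so that it is absorbed into the earlier cases.

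\textbf{Steiner block graphs $S(2,m,n)$ with $m\ge3$.} Theorem \ref{thm:vanishing-h1-steiner-system} gives $H_1=0$ outright.

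\textbf{The exceptional family $E_m$.} This is item 3.

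Collecting all the cases yields exactly the list in the statement.
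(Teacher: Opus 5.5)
Your case structure is the paper's. You dispose of unions of cliques and conference graphs via Theorem \ref{thm:vanishing-homology-conference-graphs}, use Theorem \ref{thm:nonvanishing-classification--2} for $\lambda_{\min}=-2$, and go through Neumaier's subfamilies for $\lambda_{\min}\le -3$. Your remark that $C_4=K_{2,2}$ is absorbed into item 5 is a point the paper leaves implicit. The gap is in the step you yourself flag, the small orthogonal arrays: your identifications there are wrong and the case is left unresolved.

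Since $m\le n+1$, the only arrays with $m\ge 3$ and $n\le 3$ are $\oa{3}{2}$, $\oa{3}{3}$ and $\oa{4}{3}$. Their graphs are as follows.
\begin{itemize}
\item $\oa{3}{2}$ has $n^2=4$ vertices and is $K_4$; nothing ``$K_{2,2,2}$-like'' can occur, since $K_{2,2,2}$ has $6$ vertices.
\item $\oa{3}{3}$ is not $P(9)\cong L_2(3)$. The parameter formula gives $(9,6,3,6)$, which is $K_{3,3,3}$. The lattice graph $L_2(3)$ has parameters $(9,4,1,2)$ and is the $\oa{2}{3}$ graph.
\item $\oa{4}{3}$ is $K_9$.
\end{itemize}
The case closes as follows, which is what the paper does. $K_4$ and $K_9$ are complete, so their clique complexes are simplices with trivial $H_1$. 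In your framing they never reach the $m\ge 3$ branch at all, since $\lambda_{\min}=-1$ and you already removed unions of cliques. $K_{3,3,3}$ has vanishing $H_1$ by Proposition \ref{prop:homology-complete-multipartite} with $r=3$; it already lies in the complete multipartite subfamily with $m=3$. So your fallback idea (``complete multipartite or $\lambda_{\min}=-2$'') works, via the first alternative, once the graphs are identified correctly.

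A smaller slip: Theorem \ref{thm:neumaier-2} only places $G$ in a subfamily for \emph{some} $m$, not necessarily $m=-\lambda_{\min}(G)$. Degenerate members such as $\oa{n+1}{n}\cong K_{n^2}$ have $\lambda_{\min}=-1\neq -(n+1)$. Instead, apply Theorem \ref{thm:neumaier-1} directly, which requires $\lambda_{\min}(G)$ to be an integer at most $-2$. You justified integrality. For the bound, note that an SRG other than a disjoint union of cliques has $\mu>0$, so it is connected and non-complete and contains an induced path on three vertices. Interlacing then gives $\lambda_{\min}(G)\le-\sqrt{2}$, hence $\lambda_{\min}(G)\le -2$.
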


\begin{proof}
If $G$ has integral adjacency eigenvalues and its smallest adjacency eigenvalue is $-2,$ then use Theorem \ref{thm:nonvanishing-classification--2}.
     
If $G$ has integral eigenvalues and its smallest eigenvalue is $-3$ or less, then by Theorem \ref{thm:neumaier-1}, Theorem \ref{theorem:vanishing-h1-orthogonal-array}, Theorem \ref{thm:vanishing-homology-conference-graphs} and Proposition \ref{prop:homology-complete-multipartite}, $G$ belongs to $E_m$ for some $m \geq 3$ or $G$ is a complete bipartite graph or $G$ is a strongly regular graph associated with $OA(3,2), \ OA(3,3),$ and $OA(4,3)$ (which are $K_4, K_{3,3,3},$ and $K_9$, respectively). By Theorem \ref{thm:4-vertex-common-neighbor-srg}, $\hlcl{K_4}{\FF}=0$ and $\hlcl{K_9}{\FF}=0$ for every field $\FF.$ By Proposition \ref{prop:homology-complete-multipartite}, $\hlcl{K_{3,3,3}}{\FF}=0$ for every field $\FF$. Thus, $G$ cannot be $K_4, K_{3,3,3},$ or $K_9$, and $G$ must be a complete bipartite graph in this case or a member of $E_m$ for some $m \geq 3.$

% If $G$ is a complete multipartite graph with each part having some fixed size $m \geq 3,$ then use Theorem \ref{thm:classification-theorem-1}, Theorem \ref{thm:classification-theorem-2},

% By Proposition \ref{prop:homology-complete-multipartite}, $G$ must be a complete bipartite graph. Assume that $G$ is not a complete multipartite graph. By Theorems \ref{theorem:vanishing-h1-orthogonal-array} and \ref{thm:vanishing-h1-steiner-system}, $G$ cannot be the block graph of an Steiner System or the graph associated with an orthogonal array $OA(m,n)$ with $n \geq 4.$ Hence, $G$ must be the graph associated with an $OA(m,n)$ with $m \geq 3, n \leq 4$, or, $G$ must belong to the exceptional family $E_m.$ In any orthogonal array $OA(m,n)$, $m \leq n+1$ holds, thus, the only possible orthogonal arrays satisfying $m \geq 3, n \leq 3$ are $OA(3,2), \ OA(3,3),$ and $OA(4,3).$ Now, the strongly regular graphs corresponding to $OA(3,2), \ OA(3,3),$ and $OA(4,3)$ are respectively $K_4, K_{3,3,3},$ and $K_9$, respectively. By Theorem \ref{thm:4-vertex-common-neighbor-srg}, $\hlcl{K_4}{\FF}=0$ and $\hlcl{K_9}{\FF}=0$ for every field $\FF.$ Also, by Proposition \ref{prop:homology-complete-multipartite}, $\hlcl{K_{3,3,3}}{\FF}=0$ for every field $\FF$. Thus, $G$ cannot be $K_4, K_{3,3,3},$ or $K_9$.
    
% Finally, if $G$ does not have integral adjacency eigenvalues or  with smallest adjacency eigenvalue $-3$ or less, then, by Theorem \ref{thm:neumaier-2}, 
If $G$ does not have integral eigenvalues or if $G$ has integral eigenvalues and the smallest adjacency eigenvalue is at least $-2$, then $G$ must be a disjoint union of cliques or conference graph. If $G$ is a disjoint union of cliques of size at least $3,$ then $\hlcl{G}{\FF}=0.$ On the other hand, if $G$ is a disjoint union of edges, we also have $\hlcl{G}{\FF}=0$. Finally, if $G$ is a conference graph, by Theorem \ref{thm:vanishing-homology-conference-graphs}, $G$ must have fewer than $256$ vertices, i.e., at most $255$ vertices. This completes the proof of the theorem.
\end{proof}

As an immediate corollary, we obtain the following theorem.

\begin{theorem} \label{thm:classification-theorem-2}
    If $(G_n)_{n=1}^{\infty}$ is a sequence of pairwise distinct strongly regular graphs and $(\FF_n)_{n=1}^{\infty}$ is a sequence of arbitrary fields such that $\hlcl{G_n}{\FF_n}\not =0$ for every $n$, then one of the following statements is true:
    \begin{enumerate}
        \item for infinitely many values of $n$, $G_n$ is a lattice graph with at least nine vertices,  
        \item $\lim_{n \to \infty}\lambda_{\min}(G_n)=-\infty.$
    \end{enumerate}
\end{theorem}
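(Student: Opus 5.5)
We derive Theorem \ref{thm:classification-theorem-2} from Theorem \ref{thm:classification-theorem-1} by a finiteness argument. Assume statement 1 fails, so only finitely many $G_n$ are lattice graphs with at least nine vertices. To prove statement 2 it suffices to show that for each fixed $M>0$ only finitely many $n$ satisfy $\lambda_{\min}(G_n)\ge -M$. Since the $G_n$ are pairwise distinct, it is enough to show that the set of strongly regular graphs $G$ with $\lambda_{\min}(G)\ge -M$ and $\hlcl{G}{\FF}\ne 0$ for some field $\FF$ is finite once lattice graphs are excluded. The role of the field is harmless here, because Theorem \ref{thm:classification-theorem-1} is uniform in $\FF$.

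By Theorem \ref{thm:classification-theorem-1}, every such $G$ lies in one of six classes, and we treat them in turn.
\begin{enumerate}
\item The Petersen and Shrikhande graphs contribute two graphs.
\item The conference graphs on at most $255$ vertices form a finite set, since there are finitely many graphs of bounded order.
\item The lattice graphs $L_2(n)$ with $n\ge 3$ are excluded by assumption, apart from finitely many.
\item The complete bipartite graphs $K_{n,n}$ have spectrum $\{n,0,-n\}$, so $\lambda_{\min}=-n$. Only the finitely many with $n\le M$ satisfy $\lambda_{\min}\ge -M$.
\item The exceptional families need more care. A graph in $E_m$ has $\lambda_{\min}=-m$ by Theorem \ref{thm:neumaier-1}. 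The condition $\lambda_{\min}\ge -M$ therefore forces $m\le M$, leaving the finite union $\bigcup_{3\le m\le M}E_m$ of finite families.
\end{enumerate}
Hence only finitely many pairwise distinct graphs have $\lambda_{\min}\ge -M$, so eventually $\lambda_{\min}(G_n)<-M$. Since $M$ is arbitrary, $\lambda_{\min}(G_n)\to-\infty$.

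The main obstacle is bookkeeping. We must make sure that membership in $E_m$ pins $\lambda_{\min}=-m$ exactly, which holds because Neumaier's classification is indexed by the smallest eigenvalue. We must also check that no class contains infinitely many graphs of bounded smallest eigenvalue other than the lattice graphs; the case analysis above confirms this. If one prefers to phrase statement 2 as a limit rather than via the contrapositive, one passes to subsequences, but the finiteness count already gives the full limit.
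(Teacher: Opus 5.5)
Your proposal is correct and is essentially the paper's approach: the paper states this theorem as an immediate corollary of Theorem \ref{thm:classification-theorem-1} without writing out a proof. Your count over the six classes is exactly the intended deduction: finitely many sporadic and small conference graphs, $\lambda_{\min}(K_{n,n})=-n$, $\lambda_{\min}=-m$ on the finite family $E_m$, and lattice graphs excluded by assumption.
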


\section{Conclusion}

In this paper, we study the first homology groups of strongly regular and determined 
several sufficient conditions for these groups to be non-trivial. It would be interesting to study which graphs in the exceptional families $E_m$ for $m \geq 3$ in Neumaier's classification of strongly regular graphs have nontrivial first homology group.

In the course of our investigation, we encountered the question of determining, for a strongly regular graph $G$ and non-adjacent vertices $x$ and $y$, if  $G_{x,y}$, the subgraph induced by the common neighbors of $x$ and $y$, is connected or not. We studied this problem for Paley graphs and conference graphs in general, and we hope to extend our results in the future. 

In Theorem \ref{thm:conference-induced-4-cycle-across-vertices}, we showed that if $G$ is a conference graph with at least $C_0'$ vertices, then for any induced four-cycle $(a,b,c,d)$ in $G$, either $a,c$ are in the same connected component of $G_{b,d},$ or $b,d$ are in the same connected component of $G_{a,c}.$ We showed that $C_0'=256$ suffices. It would be interesting to study whether $C_0'=256$ is optimal.

It would be interesting to extend our study to other families of graphs such as distance-regular graphs, graphs in association schemes or Cayley graphs. 

\newpage
\bibliographystyle{plain}
\bibliography{references}
\end{document}